\newtheorem{theorem}{Theorem}[section]
\newtheorem{proposition}[theorem]{Proposition}
\newtheorem{lemma}[theorem]{Lemma}
\newtheorem{corollary}[theorem]{Corollary}
\newtheorem{definition}{Definition}[section]
\newtheorem{example}{Example}[section]
\newcommand{\im}{{\mathrm{im}\hspace{0.1em}}}
\newcommand{\rank}{{\mathrm{rank}\hspace{0.1em}}}
\definecolor{blue}{rgb}{0.0, 0.5, 1.0}
\definecolor{red}{rgb}{1.0, 0.0, 0.0}
\definecolor{green}{rgb}{0.00,0.60,0}
\newcommand*{\addFileDependency}[1]{
	\typeout{(#1)}
	\@addtofilelist{#1}
	\IfFileExists{#1}{}{\typeout{No file #1.}}
}
\title{Persistent Mayer homology and persistent Mayer Laplacian}
\author[1]{Li Shen}
\author[2,1]{Jian Liu}
\author[1,3,4]{Guo-Wei Wei \thanks{Corresponding author: weig@msu.edu}}
\affil[1]{Department of Mathematics, Michigan State University, MI 48824, USA}
\affil[2]{Mathematical Science Research Center, Chongqing University of Technology, Chongqing 400054, China}
\affil[3]{Department of Electrical and Computer Engineering, Michigan State University, MI 48824, USA}
\affil[4]{Department of Biochemistry and Molecular Biology, Michigan State University, MI 48824, USA}
\renewcommand*{\@fnsymbol}[1]{\ensuremath{\ifcase#1\or \dagger\or *\or *\or
		\mathsection\or \else\@ctrerr\fi}}
\date{}
\begin{document}
	\begin{CJK*}{GBK}{kai}
		\CJKtilde
		\maketitle
		
		\paragraph{Abstract}
		In algebraic topology, the differential (i.e., boundary operator) typically satisfies $d^{2}=0$. However,  the generalized differential $d^{N}=0$ for an integer $N\geq 2$ has been studied in terms of Mayer homology on $N$-chain complexes for more than eighty years. We introduce Mayer Laplacians on $N$-chain complexes. We show that both Mayer homology and Mayer Laplacians offer considerable application potential, providing topological and geometric insights to spaces.  We also introduce persistent Mayer homology and persistent Mayer Laplacians at various $N$. The Wasserstein distance and stability of persistence diagrams associated with Mayer homology are investigated. Our computational experiments indicate that the topological features offered by  persistent Mayer homology and spectrum given by persistent Mayer Laplacians hold substantial promise for large, complex, and diverse data. We envision that the present work serves as an inaugural step towards integrating Mayer homology and Mayer Laplacians into the realm of topological data analysis.

 \paragraph{Keywords}
		$N$-chain complex, Mayer homology, Mayer Laplacian, persistence, stability.

\footnotetext[1]
{ {\bf 2020 Mathematics Subject Classification.}  	Primary  55N31; Secondary 68T09, 55N35.
}

	     \newpage
		 {\setcounter{tocdepth}{4} \tableofcontents}
	     \newpage
		
\section{Introduction}\label{section:introduction}

Topological data analysis (TDA) stands at the forefront of innovative methodologies in the field of data science, employing tools derived from  algebraic topology and differential geometry to analyze the topological invariants and geometry shapes of  complex datasets. In contrast to conventional approaches that often concentrate on numerical attributes of data, TDA distinguishes itself by prioritizing the extraction of significant topological invariants and geometrical shapes. These features play a pivotal role in capturing the nuanced patterns and relationships embedded within the data. The power of TDA is exemplified in the topological deep learning
paradigm \cite{cang2017topologynet}.

A particularly noteworthy aspect of TDA is persistent homology, a concept that extends the utility of traditional topological techniques \cite{carlsson2009computing}. As a key tool of TDA, persistent homology enables the identification and preservation of topological features across various scales within the dataset. Unlike methods of static data analyses, persistent homology captures the evolution of features over different scales, providing a dynamic and comprehensive understanding of the underlying topological structures. By incorporating this filtration of data, persistent homology, excels at discerning persistent patterns and revealing the enduring topological signatures that may be overlooked by traditional methods. This synergy enhances the robustness and depth of insights gained from TDA, making it an invaluable approach for unraveling complex relationships within diverse datasets.

As early as 1942, Walther Mayer introduced a novel homology theory that was not based on chain complexes, but rather on a structure known as an $N$-chain complex \cite{mayer1942new}. This $N$-chain complex can be understood simply as having a boundary operator $d$ that satisfies $d^N = 0$, rather than the typical  $d^2 = 0$ in persistent homology. This kind of structure appears to be more intriguing and facilitates the Mayer homology (sometimes called generalized homology). In \cite{spanier1949mayer}, Mayer considered $N$-chain complexes with coefficients in the field of integers modulo $p$. He provided a correspondence between Mayer homology of simplicial complexes and simplicial homology. This demonstrates that Mayer homology and simplicial homology can be mutually derived from each other. Reviewing the traditional differentials in simplicial complexes, Mayer homology and simplicial homology typically involve linear combinations of face operators with coefficients of +1 or -1. Drawing inspiration from this concept, we extend these coefficients to be the $N$-th primitive roots of unity. This adjustment results in an $N$-chain complex and its corresponding algebraic theory.
Let $q$ represents the primitive $N$-th root of unity. Utilizing $q$ enables the construction of an $N$-differential. This construction gives rise to the derivation of a $q$-analog for a differential graded algebra, subsequently allowing for the computation of Tor- and Ext-groups \cite{dubois1996generalized,kassel1998algebre}. the tensor product structure on the $q$-differential graded algebra has been explored, as discussed in \cite{sitarz1998tensor}. It is worth noting that there are multiple ways to construct an $N$-chain complex from a simplicial complex in the literature \cite{dubois1998d}. In \cite{abramov2006graded}, the author applied the $q$-differential to a reduced quantum plane and studied the corresponding exterior calculus on the reduced quantum. Recently, other research on $N$-chain complexes has been proposed \cite{lu2020gorenstein,lu2020cartan}.

Inspired by $N$-chain complexes and Mayer homology, we believe that Mayer homology can reveal additional topological and geometric features of a space, which is highly beneficial for the TDA of large, complex, and diverse data. For a simplicial complex, where all simplices serve as blocks and blocks of different dimensions form its algebraic and geometric structure, the standard chain complex provides a boundary operator or differential that describes the connections between simplices of adjacent dimensions.
In contrast, for a general $N$-chain complex, the $N$-differential and its composition establish connections between simplices of different dimensions. This feature is absent in traditional chain complexes where the composition of differentials results in zero. In this sense, $N$-chain complexes better capture profound relationships between simplices of varying dimensions. Consequently, Mayer homology and the corresponding Mayer Laplacian can more effectively characterize these relationships among simplices of different dimensions.

Persistent homology theory is the main workhorse in TDA and has seen substantial enrichment and development in recent years. From the standpoint of persistent parameters, researchers have delved beyond single persistent homology, exploring multi-persistent homology \cite{carlsson2007theory,carlsson2009computing}, Zig-zag persistent homology \cite{carlsson2010zigzag}, Cayley persistent homology \cite{bi2022cayley}, and many other variants.
However, persistent homology has many limitations, including its inability to capture geometric and topological features beyond topological invariants. Wei and his coworkers introduced persistent Laplacians on smooth manifolds \cite{chen2021evolutionary} and point clouds \cite{wang2020persistent}  to address the limitations of persistent homology. The harmonic part of the spectrum of the persistent Laplacian operator corresponds to persistent homology information, while the non-harmonic part provides geometric insights into the simplicial complex. Persistent Laplacians show superior performance over persistent homology, leading to successful forecasting emerging dominant viral variants \cite{chen2022persistent}.  Both persistent homology and persistent Laplacian can be defined on many topological objects beyond simplicial complex, resulting in persistent hypergraph homology/Laplacian \cite{bressan2016embedded,liu2021persistent}, persistent sheaf Laplacian \cite{wei2021persistent}, persistent path Laplacian \cite{wang2023persistent}, and persistent hyperdigraph homology/Laplacian \cite{chen2023persistent}.

However, it is worth noting that all the aforementioned formulations are built upon the construction of chain complexes. The $N$-chain complex exhibits characteristics distinct from those of usual chain complexes. In this work, we introduce  persistent Mayer homology and persistent Mayer Laplacians on $N$-chain complexes.
It is worth noting that Mayer homology may not necessarily gives rise to a homotopy invariant. Specifically, for two simplicial complexes that are homotopy equivalent, their Mayer homology may not be isomorphic. This implies that persistent Mayer homology and persistent Mayer Laplacians can reflect certain geometric structures and more topological features of simplicial complexes.
Our computations indicate that persistent Mayer homology often provides a wealth of multiscale information, comparable in many instances to the information obtained by combining the usual persistent homology as well as its associated persistent Laplacians. This underscores the strong capability of persistent Mayer homology in characterizing both geometric and topological features. Furthermore, the computation of persistent Mayer homology is significantly faster than computing the usual persistent Laplacian, highlighting a distinct advantage of persistent Mayer homology.

In this work, we employ the $N$-chain complex and Mayer homology to construct a generalized version of persistent homology theory based on $N$-differentials. Specifically, by considering the multiscale information from datasets, we introduce persistent Mayer homology (PMH) and persistent Mayer Laplacians (PMLs). We investigate the Wasserstein  distance and the stability of the persistence diagram corresponding to PMH. For a given simplicial complex, an $N$-chain complex can be constructed over the complex number field $\mathbb{C}$, and the $N$-differential on this $N$-chain complex is determined by $N$-th primitive roots of unity. This aligns with the conventional notion of differentials on chain complexes, where differentials are linear combinations of face operators with coefficients +1 and -1 (quadratic roots of unity). It is worth noting that the Mayer Laplacian can be precisely formulated as a well-behaved construction on the complex number field $\mathbb{C}$ with the Hermitian adjoint.
For a given persistence parameter, PMH and PMLs provide a family of topological features ($q=1,2,\dots,N-1$). By computing  examples on real molecules, we   observe  that these features exhibit richer topological and geometric information compared to the usual persistent simplicial homology. Computations and examples are presented to elucidate the characteristics of PMH and PMLs.

The paper is organized as follows. In Section \ref{section:Mayer_homology}, we review the $N$-chain complex and Mayer homology to establish notations. In Section \ref{section:persistence_Mayer}, we introduce persistent Mayer homology and persistent Mayer Laplacians for simplicial complexes. Section \ref{section:applications} illustrates the applications of the proposed persistent Mayer homology and persistent Mayer Laplacians with two molecules. Finally, Section \ref{section:conclusion} provides a summary of our work and discusses potential future directions.

\section{$N$-chain complex and Mayer homology}\label{section:Mayer_homology}

In this section, we  review fundamental concepts, including the $N$-chain complex and Mayer homology. Moreover, for a given simplicial complex, it is possible to construct multiple $N$-chain complexes. We   concentrate on a specific construction, which will be applied to our examples and dataset later on. Additionally, we   introduce Laplacian operators  on $N$-chain complexes. This section encompasses some properties of $N$-chain complexes and Mayer homology, along with examples of related computations.
From now on, the ground field is assumed to be the field $\mathbb{K}$. The $N$-chain complex and Mayer homology can be also built on a commutative ring with unit.

\subsection{Mayer homology}
From now on, $N$ is always an integer $\geq 2$.
\begin{definition}
An \emph{$N$-chain complex} consists of a graded $\mathbb{K}$-linear space $C_{\ast}=(C_{n})_{n\geq 0}$, equipped with a linear map $d: C_{\ast} \to C_{\ast-1}$ of degree $-1$ satisfying $d^{N}=0$.
The linear map $d_{\ast}:C_{\ast}\to C_{\ast-1}$ is called the \emph{$N$-differential ($N$-boundary operator)}.
\end{definition}
The following diagram illustrates the $N$-differential within the $N$-chain complex. Each horizontal sequence represents a chain complex corresponding to  stage $q$. The vertical sequences are given by the identity  map (id) or by the $N$-differential $d$.
\begin{equation*}
  \xymatrix{
  \cdots\ar@{->}[r]^{d}&C_{n+N-1}\ar@{->}[r]^{d^{N-1}}\ar@{->}[d]^{d}&C_{n}\ar@{->}[r]^{d}\ar@{->}[d]^{\mathrm{id}}&C_{n-1}\ar@{->}[r]^{d^{N-1}}\ar@{->}[d]^{d}&C_{n-N}\ar@{->}[r]^{d}\ar@{->}[d]^{\mathrm{id}}&C_{n-N-1}\ar@{->}[r]^{d^{N-1}}\ar@{->}[d]^{d}&\cdots\\
  \cdots\ar@{->}[r]^{d^{2}}&C_{n+N-2}\ar@{->}[r]^{d^{N-2}}\ar@{->}[d]^{d}&C_{n}\ar@{->}[r]^{d^{2}}\ar@{->}[d]^{\mathrm{id}}&C_{n-2}\ar@{->}[r]^{d^{N-2}}\ar@{->}[d]^{d}&C_{n-N}\ar@{->}[d]^{\mathrm{id}}\ar@{->}[r]^{d}&C_{n-N-2}\ar@{->}[r]^{d^{N-2}}\ar@{->}[d]^{d}&\cdots \\
  \cdots&\vdots\ar@{->}[d]^{d}&\vdots\ar@{->}[d]^{\mathrm{id}}&\vdots\ar@{->}[d]^{d}&\vdots\ar@{->}[d]^{\mathrm{id}}&\vdots\ar@{->}[d]^{d}&\cdots\\
  \cdots\ar@{->}[r]^{d^{q}}&C_{n+N-q}\ar@{->}[r]^{d^{N-q}}\ar@{->}[d]^{d}&C_{n}\ar@{->}[r]^{d^{q}}\ar@{->}[d]^{\mathrm{id}}&C_{n-q}\ar@{->}[r]^{d^{N-q}}\ar@{->}[d]^{d}&C_{n-N}\ar@{->}[r]^{d^{q}}\ar@{->}[d]^{\mathrm{id}}&C_{n-N-q}\ar@{->}[r]^{d^{N-q}}\ar@{->}[d]^{d}&\cdots\\
  \cdots&\vdots\ar@{->}[d]^{d}&\vdots\ar@{->}[d]^{\mathrm{id}}&\vdots\ar@{->}[d]^{d}&\vdots\ar@{->}[d]^{\mathrm{id}}&\vdots\ar@{->}[d]^{d}&\cdots\\
  \cdots\ar@{->}[r]^{d^{N-2}}&C_{n+2}\ar@{->}[r]^{d^{2}}\ar@{->}[d]^{d}&C_{n}\ar@{->}[r]^{d^{N-2}}\ar@{->}[d]^{\mathrm{id}}&C_{n-N+2}\ar@{->}[r]^{d^{2}}\ar@{->}[d]^{d}&C_{n-N}\ar@{->}[r]^{d^{N-2}}\ar@{->}[d]^{\mathrm{id}}&C_{n-2N+2}\ar@{->}[r]^{d^{2}}\ar@{->}[d]^{d}&\cdots\\
  \cdots\ar@{->}[r]^{d^{N-1}}&C_{n+1}\ar@{->}[r]^{d}&C_{n}\ar@{->}[r]^{d^{N-1}}&C_{n-N+1}\ar@{->}[r]^{d}&C_{n-N}\ar@{->}[r]^{d^{N-1}}&C_{n-2N+1}\ar@{->}[r]^{d}&\cdots
  }
\end{equation*}
In particular, when $N=2$, the $N$-chain complex reduces to the usual chain complex.
\begin{definition}
A \emph{morphism $f:(C_{\ast},d)\to (C_{\ast}',d')$ of $N$-chain complexes} is a linear map of degree zero such that $f\circ d=d'\circ f$.
 \end{definition}

Let $(C_{\ast},d)$ be an $N$-chain complex. For each $1\leq q\leq N-1$, the \emph{space of the $q$-th $n$-cycles} is defined by $Z_{n,q}=\{x\in C_{n}|d^{q}x=0\}$.
The \emph{space of the $q$-th $n$-boundaries} is given by $B_{n,q}=\{d^{N-q}x|x\in C_{N+p-n}\}$.
It follows that $B_{n,q}\subseteq Z_{n,q}$. Let us denote $d_{n}:C_{n}\to C_{n-1}$. In particular, for $N=3$, we can prove that $d_{n}C_{n}\subseteq B_{n-1,2}$, $d_{n}Z_{n,2}\subseteq Z_{n-1,1}\cap B_{n-1,2}$, $d_{n}Z_{n,1}=0$, and $d_{n}B_{n,2}\subseteq B_{n-1,1}$.
\begin{figure}[H]
  \centering
  \includegraphics[width=0.6\textwidth]{./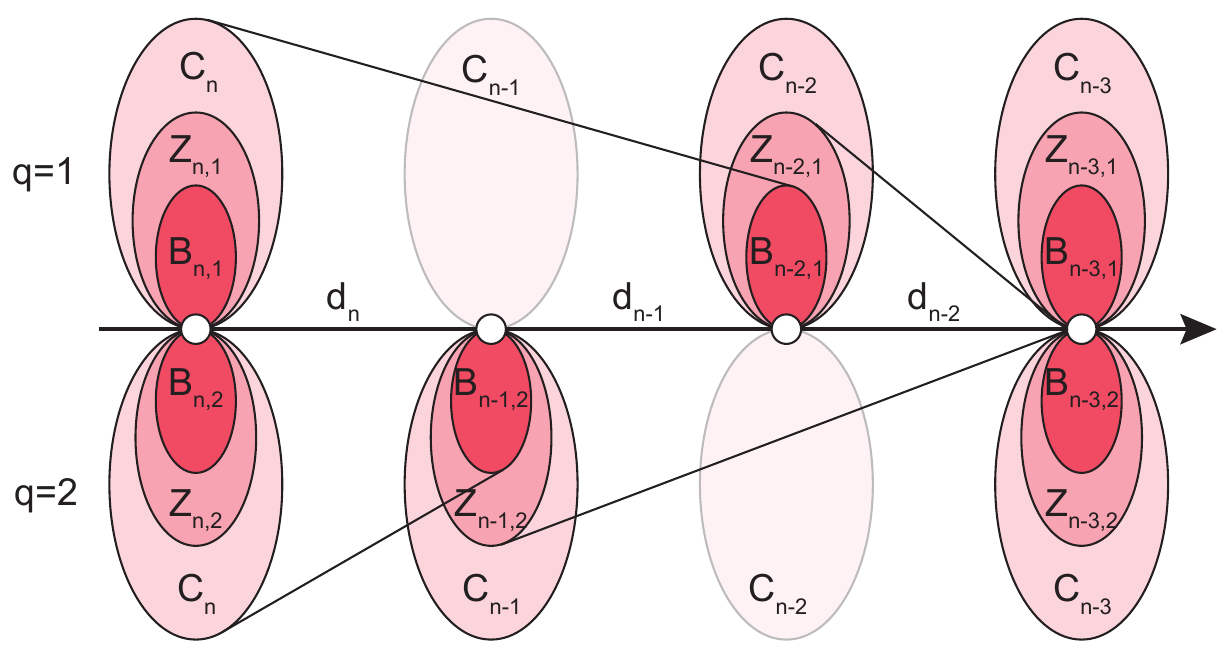}\\
  \caption{Illustration of the boundary operators and chain, cycle, and boundary groups of the $N$-chain complex.}\label{figure:illustration}
\end{figure}
The \emph{Mayer homology} of the $N$-chain complex $(C_{\ast},d)$ is defined as
\begin{equation}
  H_{n,q}(C_{\ast},d):=Z_{n,q}/B_{n,q},\quad n\geq 0.
\end{equation}
The rank of $H_{n,q}(C_{\ast},d)$ is defined as the \emph{Mayer Betti number} of the $N$-chain complex $(C_{\ast},d)$.
The idea of Mayer homology was first introduced by Mayer in 1942 \cite{mayer1942new}. In Mayer's paper, he constructed the $N$-chain complex on simplicial complexes over the field $\mathbb{Z}/p$. Here, $p$ is a prime number. And the name of Mayer homology first appeared in \cite{spanier1949mayer}, which showed the relationship between Mayer homology and the classical homology of simplicial complexes.

\begin{example}
Consider the graded vector space $\mathbb{Z}_{3}[x]$, with the grading $(\mathbb{Z}_{3}[x])_{n}=\mathbb{Z}_{3}x^{n}$ and the basis $1,x,x^{2},\dots, x^{k},\dots$. Here, $\mathbb{Z}_{3}$ is the field with elements $0,1,2$ modulo $3$. Consider the linear map $d:\mathbb{Z}_{3}[x]\to \mathbb{Z}_{3}[x]$ given by $dx^{n}=nx^{n-1}$ and $d(1)=0$. It follows that
\begin{equation*}
  d^{3}x=n(n-1)(n-2)x^{n-3},n\geq 3
\end{equation*}
or $d^{3}x=0$ for $0\leq n\leq 2$. Since the number 3 is a factor of $n(n-1)(n-2)$, we have $n(n-1)(n-2) \equiv 0$ modulo 3. This indicates that $d^{3}=0$. By a straightforward calculation, we have
\begin{equation*}
\begin{split}
  Z_{n,1} = &\left\{
              \begin{array}{ll}
                \mathbb{Z}_{3}x^{n}, & \hbox{$n=3k,k\in \mathbb{Z}_{\geq 0}$;} \\
                0, & \hbox{\rm otherwise.}
              \end{array}
            \right.\\
  Z_{n,2} = & \left\{\begin{array}{ll}
                \mathbb{Z}_{3}x^{n}, & \hbox{$n=3k,3k+1,k\in \mathbb{Z}_{\geq 0}$;} \\
                0, & \hbox{\rm otherwise.}
              \end{array}
            \right.\\
  B_{n,1} = &\left\{\begin{array}{ll}
                \mathbb{Z}_{3}x^{n}, & \hbox{$n=3k,k\in \mathbb{Z}_{\geq 0}$;} \\
                0, & \hbox{\rm otherwise.}
              \end{array}
            \right.\\
  B_{n,2} = &\left\{\begin{array}{ll}
                \mathbb{Z}_{3}x^{n}, & \hbox{$n=3k,3k+1,k\in \mathbb{Z}_{\geq 0}$;} \\
                0, & \hbox{\rm otherwise.}
              \end{array}
            \right.
\end{split}
\end{equation*}
By definition, the Mayer homology is given by
\begin{equation*}
  H_{n,1}(\mathbb{Z}_{3}[x])=H_{n,2}(\mathbb{Z}_{3}[x])=0, \quad n\geq 0.
\end{equation*}
Now, let $A_{m}=\mathbb{Z}_{3}\{1,x,\dots,x^{3m+1}\}$ be the graded vector space generated by $1,x,\dots,x^{3m+1}$. One has
\begin{equation*}
\begin{split}
  Z_{n,1} = &\left\{
              \begin{array}{ll}
                \mathbb{Z}_{3}x^{n}, & \hbox{$n=3k,k=0,1,\dots,m$;} \\
                0, & \hbox{\rm otherwise.}
              \end{array}
            \right.\\
  Z_{n,2} = & \left\{\begin{array}{ll}
                \mathbb{Z}_{3}x^{n}, & \hbox{$n=3k,3k+1,k=0,1,\dots,m$;} \\
                0, & \hbox{\rm otherwise.}
              \end{array}
            \right.\\
  B_{n,1} = &\left\{\begin{array}{ll}
                \mathbb{Z}_{3}x^{n}, & \hbox{$n=3k,k=0,1,\dots,m-1$;} \\
                0, & \hbox{\rm otherwise.}
              \end{array}
            \right.\\
  B_{n,2} = &\left\{\begin{array}{ll}
                \mathbb{Z}_{3}x^{n}, & \hbox{$n=3k,3k+1,k=0,1,\dots,m-1$;} \\
                \mathbb{Z}_{3}x^{n}, & \hbox{$n=3m$;} \\
                0, & \hbox{\rm otherwise.}
              \end{array}
            \right.
\end{split}
\end{equation*}
It follows that
\begin{equation*}
  H_{n,1}(A_{m})=\left\{
                               \begin{array}{ll}
                                 \mathbb{Z}_{3}x^{n}, & \hbox{$n=3m$;} \\
                                 0, & \hbox{\rm otherwise.}
                               \end{array}
                             \right.
\end{equation*}
and
\begin{equation*}
  H_{n,2}(A_{m})=\left\{
                               \begin{array}{ll}
                                 \mathbb{Z}_{3}x^{n}, & \hbox{$n=3m+1$;} \\
                                 0, & \hbox{\rm otherwise.}
                               \end{array}
                             \right.
\end{equation*}
\end{example}

Let $f:(C_{\ast},d)\to (C_{\ast}',d')$ be a morphism of $N$-chain complexes. Since $f$ commutes with the $N$-differential, it induces the morphism of Mayer homology
\begin{equation}
  f_{\ast,q}:H_{\ast,q}(C_{\ast},d)\to H_{\ast,q}(C_{\ast}',d'),\quad [z]\mapsto [f(z)]
\end{equation}
for any $1\leq q\leq N-1$. Moreover, one has
\begin{proposition}\emph{(\cite[Proposition 1]{dubois1998d})}
If $f_{\ast,1}:H_{\ast,1}(C_{\ast},d)\to H_{\ast,1}(C_{\ast}',d')$ and $f_{\ast,N-1}:H_{\ast,N-1}(C_{\ast},d)\to H_{\ast,N-1}(C_{\ast}',d')$ are isomorphisms, then $f_{\ast,q}:H_{\ast,q}(C_{\ast},d)\to H_{\ast,q}(C_{\ast}',d')$ is an isomorphism for any $1\leq q\leq N-1$.
\end{proposition}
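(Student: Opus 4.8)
The plan is to deduce the statement from the five lemma applied to a ladder of long exact sequences that is natural in the $N$-chain complex. The essential input is a six-term periodic long exact sequence—a \emph{hexagon}—relating the Mayer homologies at the various stages $q$. Since $f$ commutes with $d$ and preserves degree, it carries $Z_{n,q}$ into $Z_{n,q}'$ and $B_{n,q}$ into $B_{n,q}'$, so it induces a commutative ladder between the hexagon of $(C_\ast,d)$ and that of $(C_\ast',d')$. Granting the hexagon, the proposition becomes a purely formal induction.

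First I would set up the two families of structural maps on Mayer homology. The inclusions $Z_{n,a}\subseteq Z_{n,b}$ and $B_{n,a}\subseteq B_{n,b}$ for $a\le b$ induce stage-raising maps $\iota\colon H_{n,a}\to H_{n,b}$ of degree $0$, and for $l<q$ the operator $d^{l}$ sends $Z_{n,q}$ into $Z_{n-l,q-l}$ and $B_{n,q}=\operatorname{im}(d^{N-q})$ into $B_{n-l,q-l}$, inducing maps $\bar d^{\,l}\colon H_{n,q}\to H_{n-l,q-l}$. A direct check with the definitions $Z_{n,q}=\ker(d^{q})$ and $B_{n,q}=\operatorname{im}(d^{N-q})$ then shows that these assemble into the exact sequence
\[ \cdots \to H_{n,1}\xrightarrow{\iota} H_{n,q+1}\xrightarrow{\bar d} H_{n-1,q}\xrightarrow{\iota} H_{n-1,N-1}\xrightarrow{\bar d^{\,q}} H_{n-1-q,\,N-1-q}\xrightarrow{\iota} H_{n-1-q,\,N-q}\xrightarrow{\bar d^{\,N-q-1}} H_{n-N,1}\to\cdots, \]
which is periodic of period six, the six stages occurring being $1,\,q+1,\,q,\,N-1,\,N-1-q,\,N-q$ and each full period lowering the degree by $N$. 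Verifying exactness at each of the six spots is the technical heart: at every spot one identifies the kernel of the outgoing map and the image of the incoming map as the same subquotient of $C_n$, e.g.\ at $H_{n,q+1}$ one checks $\ker\bar d=(Z_{n,1}+B_{n,q+1})/B_{n,q+1}=\operatorname{im}\iota$, and similarly at the other five spots.

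With the hexagon in hand the argument concludes by a two-sided induction. Reading the sequence, the five lemma shows that $f_{\ast,q+1}$ is an isomorphism whenever $f$ is an isomorphism on the four neighbouring stages $\{1,\,N-1,\,q,\,N-q\}$, and, applied at the antipodal spot of the same hexagon, that $f_{\ast,N-1-q}$ is an isomorphism under the same hypothesis. The hypotheses supply the anchors $q=1$ and $q=N-1$, which is exactly the case $\{1,N-1,q,N-q\}=\{1,N-1\}$ at $q=1$. Inductively, assuming $f$ is an isomorphism on every stage in $\{1,\dots,k\}\cup\{N-k,\dots,N-1\}$, apply the two conclusions with $q=k$ to adjoin stages $k+1$ and $N-1-k$; repeating until the two ranges meet exhausts all stages $1\le q\le N-1$.

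The main obstacle is establishing the hexagon—pinning down the correct bidegrees of the connecting maps $\bar d^{\,l}$ and carrying out the six exactness verifications without index errors. A secondary, more conceptual point worth highlighting is that the hexagon intertwines stage $q$ with its complement $N-q$; this coupling is precisely why a single anchor does not suffice and why the two hypotheses at $q=1$ and $q=N-1$ are exactly what the two-sided induction consumes.
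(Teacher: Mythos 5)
Your proposal is correct, and it is essentially the argument behind the statement as the paper presents it: the paper gives no proof of its own but cites the result as Proposition 1 of \cite{dubois1998d}, whose proof is exactly your hexagonal exact sequence (Lemma 1 of that reference; your hexagon is its case $(\ell,m)=(q,1)$, with the same bidegrees and the same alternation of inclusion-induced maps and maps induced by powers of $d$) combined with the five lemma and a two-sided induction anchored at $q=1$ and $q=N-1$. Your details check out — in particular the identification $\ker\bar{d}=(Z_{n,1}+B_{n,q+1})/B_{n,q+1}=\im\iota$ and the fact that the five lemma at stages $q+1$ and $N-1-q$ consumes precisely the set of stages $\{1,\,N-1,\,q,\,N-q\}$ — so the reconstruction is faithful to the cited source.
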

The above proposition   shows that if $f_{\ast,q}:H_{\ast,1}(C_{\ast},d)\to H_{\ast,1}(C_{\ast}',d')$ is an isomorphism for $q=1,N-1$, then it is an isomorphism for any $1\leq q\leq N-1$.
There are various distinctive properties associated with Mayer homology. For instance, it has been demonstrated in \cite{dubois1998d} that there exists an isomorphism of linear spaces, $H_{\ast,q}(C_{\ast},d)\cong H_{\ast,N-q}(C_{\ast},d)$. However, it does not have to be $H_{n,q}(C_{\ast},d)\cong H_{n,N-q}(C_{\ast},d)$ for a given $n$.

Let $\mathbf{Nchain}$ be the category of $N$-chain complexes, whose objects are the $N$-chain complexes, and whose morphisms are the morphisms of $N$-chain complexes. Let $\mathbf{Vec}_{\mathbb{K}}$ be the category of vector spaces over $\mathbb{K}$. Then we have the following proposition.
\begin{proposition}\label{proposition:functor1}
The Mayer homology $H_{\ast,q}:\mathbf{Nchain}\to \mathbf{Vec}_{\mathbb{K}}$ is a functor for $1\leq q\leq N-1$.
\end{proposition}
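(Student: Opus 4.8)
The plan is to verify the two functor axioms directly from the constructions already in place. On objects, the functor sends an $N$-chain complex $(C_{\ast},d)$ to the graded $\mathbb{K}$-vector space with components $H_{n,q}(C_{\ast},d)=Z_{n,q}/B_{n,q}$. Since $B_{n,q}\subseteq Z_{n,q}$ are $\mathbb{K}$-linear subspaces of $C_{n}$, each quotient $H_{n,q}$ is a genuine object of $\mathbf{Vec}_{\mathbb{K}}$, so the object assignment is well-posed. On morphisms, the functor sends $f:(C_{\ast},d)\to(C_{\ast}',d')$ to the map $f_{\ast,q}:[z]\mapsto[f(z)]$ introduced above.

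The first and only substantive step is to confirm that $f_{\ast,q}$ is well-defined. First I would iterate the defining relation $f\circ d=d'\circ f$ to obtain $f\circ d^{k}=(d')^{k}\circ f$ for every $k\geq 1$. From this, if $z\in Z_{n,q}$, i.e.\ $d^{q}z=0$, then $(d')^{q}f(z)=f(d^{q}z)=0$, so $f$ carries $Z_{n,q}$ into $Z_{n,q}'$; and if $b=d^{N-q}x\in B_{n,q}$, then $f(b)=f(d^{N-q}x)=(d')^{N-q}f(x)\in B_{n,q}'$, so $f$ carries $B_{n,q}$ into $B_{n,q}'$. Consequently $f$ descends to a $\mathbb{K}$-linear map on the quotients, and $[z]=[z']$ forces $[f(z)]=[f(z')]$, so $f_{\ast,q}$ is independent of the chosen representative.

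The remaining two steps verify the functor axioms and are purely formal. For the identity morphism one computes $(\mathrm{id})_{\ast,q}[z]=[\mathrm{id}(z)]=[z]$, hence $(\mathrm{id})_{\ast,q}=\mathrm{id}_{H_{\ast,q}}$. For composable morphisms $f:(C_{\ast},d)\to(C_{\ast}',d')$ and $g:(C_{\ast}',d')\to(C_{\ast}'',d'')$, the definition yields $(g\circ f)_{\ast,q}[z]=[(g\circ f)(z)]=[g(f(z))]=g_{\ast,q}[f(z)]=(g_{\ast,q}\circ f_{\ast,q})[z]$, so $(g\circ f)_{\ast,q}=g_{\ast,q}\circ f_{\ast,q}$. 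Together these establish functoriality. I do not anticipate any genuine obstacle: the only point requiring care is the well-definedness in the first step, which hinges entirely on the commutation $f\circ d^{k}=(d')^{k}\circ f$ — the same mechanism that makes ordinary chain maps induce maps on homology, now applied to the powers $d^{q}$ and $d^{N-q}$ that define $Z_{n,q}$ and $B_{n,q}$.
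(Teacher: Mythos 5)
Your proof is correct and follows essentially the same route as the paper: the map $f_{\ast,q}\colon [z]\mapsto [f(z)]$ is well defined because $f\circ d^{k}=(d')^{k}\circ f$ carries $Z_{n,q}$ into $Z_{n,q}'$ and $B_{n,q}$ into $B_{n,q}'$, and the identity and composition axioms are then formal. The paper's own proof only writes out the composition identity $g_{\ast,q}f_{\ast,q}([z])=[gf(z)]=(g\circ f)_{\ast,q}([z])$ and leaves the remaining checks implicit, so your version is simply a more complete write-up of the same argument.
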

\begin{proof}
For morphisms  $f:(C_{\ast},d)\to (C_{\ast}',d')$ and $g:(C_{\ast}',d')\to (C_{\ast}'',d'')$ of $N$-chain complexes, one has
\begin{equation}
  g_{\ast,q}f_{\ast,q}([z])=g_{\ast,q}([f(z)])=[gf(z)]=(g\circ f)_{\ast,q}([z]).
\end{equation}
Here, $z\in H_{\ast,q}(C_{\ast},d)$. The left can be verified step by step.
\end{proof}
It is worth noting that the functorial property of Mayer homology is crucial for us to develop the persistence for Mayer homology. More specifically, morphisms at the $N$-chain level can always induce morphisms at the homology level. Indeed, we also require  the functorial property that maps the morphisms at the simplicial complex level to morphisms at the $N$-chain level.

The $N$-chain complex is a kind of generalization of the usual chain complex by changing the boundary operator by an $N$-boundary operator. Other than the homology of $N$-chain complexes, the homotopy for $N$-chain complexes can be also built. More precisely, two morphisms $f,g:(C_{\ast},d)\to (C_{\ast}',d')$ of $N$-chain complexes are \emph{homotopic} if there exist linear maps $h_{k}:(C_{\ast},d)\to (C_{\ast+1}',d')$ of degree 1 for $0\leq k\leq N-1$ such that $f-g=\sum\limits_{k=0}^{N-1}h_{k}d^{k}$. If $f,g:(C_{\ast},d)\to (C_{\ast}',d')$ are $N$-chain homotopic, then they induce the same morphism of Mayer homology, i.e., $f_{\ast,q}=g_{\ast,q}$ for $1\leq q\leq N-1$.

\subsection{$N$-chain complex on simplicial complexes}

From now on, for the sake of simplicity, we will always consider the case where $N$ is a prime number, and the field $\mathbb{K}$ is taken to be the complex number field $\mathbb{C}$.
Let $\xi=e^{2\pi \sqrt{-1}/N}$ be the primitive $N$-th root of unity. It follows that $\sum\limits_{i=0}^{N-1}\xi^{i}=0$. Moreover, $\sum\limits_{i=0}^{k}\xi^{i}\neq 0$ for any $0\leq k\leq N-2$.\\

Let $K$ be a simplicial complex. Let $C_{n}(K;\mathbb{C})$ be the linear space generated by the $n$-simplices of $K$ over $\mathbb{C}$. Consider the linear map $d_{n}:C_{n}(K;\mathbb{C})\to C_{n-1}(K;\mathbb{C})$ given by
\begin{equation}
  d_{n}\langle v_{0},v_{1},\dots,v_{n}\rangle=\sum\limits_{i=0}^{n}\xi^{i}\langle v_{0},\dots,\hat{v_{i}},\dots,v_{n}\rangle,\quad n\geq 1
\end{equation}
and $d_{0}=0$. Then $d:C_{\ast}(K;\mathbb{C})\to C_{\ast}(K;\mathbb{C})$ we have a linear map of degree -1. Moreover, we have
\begin{lemma}
$d^{N}=0$.
\end{lemma}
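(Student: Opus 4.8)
The plan is to compute the coefficient of each codimension-$N$ face of $d^{N}\langle v_{0},\dots,v_{n}\rangle$ explicitly and show it vanishes. Since $d$ lowers degree by one, $d^{N}\langle v_{0},\dots,v_{n}\rangle=0$ automatically whenever $n<N$, so I may assume $n\geq N$. In that case $d^{N}\langle v_{0},\dots,v_{n}\rangle$ is a linear combination of the faces obtained by deleting an $N$-element subset $S=\{s_{1}<\cdots<s_{N}\}$ of the vertices, and the whole claim reduces to showing that every such coefficient is $0$.

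First I would track how coefficients accumulate under repeated application of $d$. Deleting the vertices of $S$ one at a time, in the order prescribed by a permutation $\pi$ of $\{1,\dots,N\}$, each intermediate application of $d$ contributes a power $\xi^{p}$, where $p$ is the \emph{current} position of the vertex being removed, namely its original index minus the number of already-deleted vertices lying to its left. The key bookkeeping step is to show that, summed over the $N$ deletions, the total exponent equals $\sigma(S)-(\text{number of non-inversions of }\pi)$, where $\sigma(S)=s_{1}+\cdots+s_{N}$ is independent of the order (the sum of original indices is unchanged, and the accumulated leftward shifts count exactly the non-inversions of $\pi$). Using that the number of non-inversions is $\binom{N}{2}-\mathrm{inv}(\pi)$, the contribution of the ordering $\pi$ becomes $\xi^{\sigma(S)-\binom{N}{2}}\,\xi^{\mathrm{inv}(\pi)}$.

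Summing over all orderings, the coefficient of the face obtained by deleting $S$ factors as
\[
\xi^{\sigma(S)-\binom{N}{2}}\sum_{\pi}\xi^{\mathrm{inv}(\pi)} .
\]
Here I would invoke the classical identity for the Mahonian distribution of the inversion statistic, $\sum_{\pi}q^{\mathrm{inv}(\pi)}=\prod_{m=1}^{N}(1+q+\cdots+q^{m-1})$, specialized to $q=\xi$. The product $\prod_{m=1}^{N}(1+\xi+\cdots+\xi^{m-1})$ contains the factor $1+\xi+\cdots+\xi^{N-1}$, which vanishes because $\xi$ is a primitive $N$-th root of unity; this is precisely the relation $\sum_{i=0}^{N-1}\xi^{i}=0$ recorded just before the lemma. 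Hence every coefficient is zero, so $d^{N}=0$.

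The routine-but-delicate part is the exponent bookkeeping of the second paragraph, since the position of a vertex shifts each time a vertex to its left is deleted, and one must argue carefully that the cumulative shift over a fixed ordering counts non-inversions. The cleanest alternative, which I would use if the direct permutation sum is awkward, is to prove by induction on $k$ the closed formula $d^{k}\langle v_{0},\dots,v_{n}\rangle=\sum_{|S|=k}\xi^{\sigma(S)-\binom{k}{2}}\big(\prod_{m=1}^{k}(1+\xi+\cdots+\xi^{m-1})\big)\,\langle\widehat{S}\rangle$, the inductive step requiring only a $q$-Pascal type recurrence for the coefficients; the case $k=N$ then yields the result immediately via $\sum_{i=0}^{N-1}\xi^{i}=0$. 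As a sanity check, at $N=2$ this recovers $d^{2}=0$, since the offending factor is $1+\xi=1+(-1)=0$.
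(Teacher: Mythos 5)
Your proof is correct, but your primary route is organized differently from the paper's. The paper proves, by induction on $r$, the closed formula
\begin{equation*}
d^{r}=\Bigl(\prod_{k=1}^{r}(1+\xi+\cdots+\xi^{k-1})\Bigr)\sum_{j_{1}<\cdots<j_{r}}\xi^{j_{1}+\cdots+j_{r}-\frac{r(r-1)}{2}}\,\partial_{j_{1}}\cdots\partial_{j_{r}},
\end{equation*}
and then sets $r=N$, where the factor $1+\xi+\cdots+\xi^{N-1}=0$ kills every term. You instead expand $d^{N}$ directly as a sum over deletion orders, show by exponent bookkeeping (which I checked: the cumulative leftward shifts do count the non-inversions of the deletion order) that the coefficient of the face indexed by $S$ is $\xi^{\sigma(S)-\binom{N}{2}}\sum_{\pi}\xi^{\mathrm{inv}(\pi)}$, and then invoke the Mahonian identity $\sum_{\pi}q^{\mathrm{inv}(\pi)}=\prod_{m=1}^{N}(1+q+\cdots+q^{m-1})$ at $q=\xi$. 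These are two packagings of the same computation: the inductive step behind the paper's formula is in effect a proof of the $q$-factorial recurrence underlying the Mahonian identity, and both arguments terminate in the same vanishing $1+\xi+\cdots+\xi^{N-1}=0$ recorded just before the lemma. What your route buys is conceptual transparency---it identifies the coefficient as the $q$-factorial $\prod_{m=1}^{N}(1+\xi+\cdots+\xi^{m-1})$ arising from the inversion statistic, placing the lemma squarely in standard $q$-combinatorics; what the paper's route buys is brevity and self-containedness, since the induction needs no external identity. Note also that the ``cleanest alternative'' you sketch at the end is verbatim the paper's proof, with the identical closed formula, so your proposal actually subsumes the paper's argument.
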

\begin{proof}
Let $\partial_{i}:K_{n}\to K_{n-1},\langle v_{0},v_{1},\dots,v_{n}\rangle\mapsto \langle v_{0},\dots,\hat{v_{i}},\dots,v_{n}\rangle$ denote the $i$-th face map of simplicial complex $K$. If $n<N$, we have $d^{N}=0$. For $r\leq n$, by induction, we can prove
\begin{equation}
  d^{r}=\left(\prod\limits_{k=1}^{r}(1+\xi+\cdots+\xi^{k-1})\right)\sum\limits_{j_{1}<\cdots<j_{r}}\xi^{j_{1}+\cdots+j_{r}-\frac{r(r-1)}{2}}\partial_{j_{1}}\cdots\partial_{j_{r}}.
\end{equation}
Note that $1+\xi+\cdots+\xi^{N-1}=0$. It follows that $d^{N}=0$.
\end{proof}
Then the construction $(C_{\ast}(K;\mathbb{C}),d)$ is an $N$-chain complex. There are various ways to construct $N$-chain complexes on a simplicial complex, and these different constructions lead to different Mayer homology \cite{dubois1998d}. In this work, we will study the $N$-chain complex constructed above. The $N$-chain complex $(C_{\ast}(K;\mathbb{C}),d)$ is over the field $\mathbb{C}$, which is more computationally feasible. In addition, we can consider the inner product structure on the $N$-chain complex $(C_{\ast}(K;\mathbb{C}),d)$, which leads to the Laplacians on the $N$-chain complex.

For $1\leq q\leq N-1$, the \emph{Mayer homology of the simplicial complex} $K$ is defined by
\begin{equation}
  H_{n,q}(K;\mathbb{C}):=H_{n,q}(C_{\ast}(K;\mathbb{C}),d),\quad n\geq 0.
\end{equation}
The Betti numbers corresponding to the Mayer homology are called the \emph{Mayer Betti numbers} of simplicial complex, denoted by $\beta_{n,q}$.

\begin{proposition}\label{proposition:functor2}
The construction $C_{\ast}(-;\mathbb{C}):\mathbf{Cpx}\to \mathbf{Nchain}$ is a functor from the category of simplicial complexes to the category of $N$-chain.
\end{proposition}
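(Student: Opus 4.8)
The plan is to show that $C_{\ast}(-;\mathbb{C})$ satisfies the two defining axioms of a functor: it sends objects to objects, morphisms to morphisms, and it respects identities and composition. First I would address the object-level assignment. Given a simplicial complex $K$, the construction produces the graded vector space $C_{\ast}(K;\mathbb{C})$ together with the degree $-1$ map $d$ defined via $d_{n}\langle v_{0},\dots,v_{n}\rangle=\sum_{i=0}^{n}\xi^{i}\langle v_{0},\dots,\hat{v_{i}},\dots,v_{n}\rangle$. The preceding lemma already establishes $d^{N}=0$, so $(C_{\ast}(K;\mathbb{C}),d)$ is a bona fide object of $\mathbf{Nchain}$; this part requires no new work.

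The main content is the morphism-level assignment. Given a simplicial map $\varphi\colon K\to L$, I would define $C_{\ast}(\varphi;\mathbb{C})\colon C_{\ast}(K;\mathbb{C})\to C_{\ast}(L;\mathbb{C})$ on basis elements by
\begin{equation*}
  C_{n}(\varphi;\mathbb{C})\langle v_{0},\dots,v_{n}\rangle=
  \begin{cases}
    \langle \varphi(v_{0}),\dots,\varphi(v_{n})\rangle, & \text{if $\varphi(v_{0}),\dots,\varphi(v_{n})$ are distinct;}\\
    0, & \text{otherwise,}
  \end{cases}
\end{equation*}
extended linearly. This is a degree-zero linear map, so the only substantive thing to verify is that it is a morphism of $N$-chain complexes, i.e. that it commutes with the $N$-differential: $C_{\ast}(\varphi;\mathbb{C})\circ d=d'\circ C_{\ast}(\varphi;\mathbb{C})$. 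I would check this by evaluating both sides on a generator $\langle v_{0},\dots,v_{n}\rangle$ and comparing term by term. The key point is that the coefficients $\xi^{i}$ are indexed by the position $i$ of the deleted vertex, and a simplicial map preserves the ordering of vertices, so the $i$-th face of the image corresponds to the image of the $i$-th face with the same power $\xi^{i}$. Care must be taken with the degenerate case where $\varphi$ identifies two vertices of the simplex, since then some image faces collapse; I would confirm that the degenerate terms cancel consistently on both sides so that the identity still holds.

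Finally I would verify the two functoriality axioms. Preservation of identities is immediate: the identity simplicial map sends each ordered simplex to itself with distinct vertices, so $C_{\ast}(\mathrm{id}_{K};\mathbb{C})=\mathrm{id}$. Preservation of composition follows by evaluating $C_{\ast}(\psi\circ\varphi;\mathbb{C})$ and $C_{\ast}(\psi;\mathbb{C})\circ C_{\ast}(\varphi;\mathbb{C})$ on a generator and matching the distinct/degenerate cases, exactly as in the classical simplicial chain functor.

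I expect the commutation $C_{\ast}(\varphi;\mathbb{C})\circ d=d'\circ C_{\ast}(\varphi;\mathbb{C})$ to be the main obstacle, and specifically the bookkeeping in the degenerate case. When $\varphi$ collapses vertices, the positional powers $\xi^{i}$ no longer align naively with the faces of the (lower-dimensional) image simplex, so the cancellation of the extra terms is not automatic and is precisely where the classical $\pm 1$ argument must be adapted to the root-of-unity coefficients. Everything else is routine once this compatibility is secured.
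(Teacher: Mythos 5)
There is a genuine gap, and it sits exactly at the step you deferred. Your plan to handle collapsing simplicial maps by sending degenerate images to $0$ and then verifying that ``the degenerate terms cancel consistently on both sides'' cannot be carried out for $N\geq 3$. Concretely, take $N=3$, let $\sigma=\langle v_{0},v_{1}\rangle$ be an edge, and let $\varphi$ collapse $v_{0}$ and $v_{1}$ to a single vertex $w$. Then $\varphi_{\#}(\sigma)=0$, so $d\varphi_{\#}(\sigma)=0$, but
\begin{equation*}
  \varphi_{\#}(d\sigma)=\varphi_{\#}\bigl(\langle v_{1}\rangle+\xi\langle v_{0}\rangle\bigr)=(1+\xi)\langle w\rangle\neq 0,
\end{equation*}
since $1+\xi\neq 0$ whenever $N\geq 3$ (the paper records precisely this: $\sum_{i=0}^{k}\xi^{i}\neq 0$ for $0\leq k\leq N-2$). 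The same failure occurs in every dimension: if $\varphi$ identifies the adjacent vertices $v_{j},v_{j+1}$ of an $n$-simplex, the two surviving terms of $\varphi_{\#}(d\sigma)$ are the \emph{same} ordered tuple with coefficients $\xi^{j}$ and $\xi^{j+1}$, leaving $\xi^{j}(1+\xi)$ times a nonzero chain, while $d\varphi_{\#}(\sigma)=0$. No orientation or sign convention can rescue this, because the two offending faces are literally identical; the classical argument you hoped to adapt works only because $1+\xi=0$ when $\xi=-1$, i.e., only for $N=2$.

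The resolution is that the statement should be read (and the paper implicitly proves it) for simplicial maps that are injective on each simplex --- in particular for the inclusions $K_{a}\hookrightarrow K_{b}$, which are all the persistence construction ever uses. The paper's own computation $d\phi(\sigma)=\sum_{i}\xi^{i}\langle \phi(v_{0}),\dots,\widehat{\phi(v_{i})},\dots,\phi(v_{n})\rangle$ is only valid when the $\phi(v_{i})$ are distinct, so its proof silently makes this assumption. On that restricted class of morphisms your argument and the paper's coincide: the positional indexing of $\xi^{i}$ is preserved, commutation with $d$ is a term-by-term identity, and identities and composition are immediate. So you should either restrict $\mathbf{Cpx}$ to injective simplicial maps (or inclusions) or flag that the functor does not extend to vertex-collapsing maps for $N\geq 3$; as written, your proposal promises a verification that is false.
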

\begin{proof}
Let $\phi:K\to L$ be a morphism of simplicial complexes. The induced morphism $$C_{\ast}(\phi):(C_{\ast}(K;\mathbb{C}),d_{K})\to (C_{\ast}(L;\mathbb{C}),d_{L})$$ of $N$-chain complexes is given by $C_{\ast}(\phi)(\sigma)=\phi(\sigma)$. Indeed, for any $\sigma=\langle v_{0},v_{1},\dots,v_{n}\rangle$, we have
\begin{equation}
\begin{split}
  dC_{\ast}(\phi)(\sigma) = &d\phi(\sigma) \\
    = &\sum\limits_{i=0}^{n}\xi^{i}\langle \phi(v_{0}),\dots,\hat{\phi(v_{i})},\dots,\phi(v_{n})\rangle\\
    = &\phi(\sum\limits_{i=0}^{n}\xi^{i}\langle v_{0},\dots,\hat{v_{i}},\dots,v_{n}\rangle)\\
    = & C_{\ast}(\phi)(d\sigma).
\end{split}
\end{equation}
Obviously, $C_{\ast}(\phi)$ preserves identity. The desired result follows.
\end{proof}

\begin{corollary}\label{corollary:functor}
The Mayer homology $H_{\ast,q}(-;\mathbb{C}):\mathbf{Cpx}\to \mathbf{Vec}_{\mathbb{K}}$ is a functor from the category of simplicial complexes to the category of vector spaces over $\mathbb{K}$.
\end{corollary}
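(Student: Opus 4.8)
The plan is to recognize that the functor in question factors as a composition of the two functors already established in the immediately preceding results, and then invoke the elementary fact that a composition of functors is again a functor. Concretely, I would first observe that by definition $H_{n,q}(K;\mathbb{C}) = H_{n,q}(C_{\ast}(K;\mathbb{C}),d)$, so at the level of objects the assignment $H_{\ast,q}(-;\mathbb{C})$ is literally the object-assignment of $H_{\ast,q}$ applied to the object-assignment of $C_{\ast}(-;\mathbb{C})$. In other words, as functors we have the identification
\begin{equation}
  H_{\ast,q}(-;\mathbb{C}) = H_{\ast,q}\circ C_{\ast}(-;\mathbb{C}).
\end{equation}

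Next I would verify that this identification also holds at the level of morphisms. Given a morphism of simplicial complexes $\phi:K\to L$, Proposition \ref{proposition:functor2} produces the induced $N$-chain morphism $C_{\ast}(\phi)$, and then the discussion surrounding Proposition \ref{proposition:functor1} assigns to it the induced map on Mayer homology $C_{\ast}(\phi)_{\ast,q}$. The induced map on $H_{\ast,q}(-;\mathbb{C})$ is by definition exactly this composite, so the morphism-assignment of $H_{\ast,q}(-;\mathbb{C})$ agrees with the morphism-assignment of the composite functor.

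Finally I would appeal to the two functoriality properties. Proposition \ref{proposition:functor2} gives that $C_{\ast}(-;\mathbb{C})$ preserves identities and composition; Proposition \ref{proposition:functor1} gives the same for $H_{\ast,q}$. Preservation of identities is immediate since $H_{\ast,q}(\mathrm{id}_K) = (C_{\ast}(\mathrm{id}_K))_{\ast,q} = (\mathrm{id})_{\ast,q} = \mathrm{id}$. For composition, given $\phi:K\to L$ and $\psi:L\to M$, I would chain the two compatibility identities: $C_{\ast}(\psi\circ\phi) = C_{\ast}(\psi)\circ C_{\ast}(\phi)$ from Proposition \ref{proposition:functor2}, followed by $(g\circ f)_{\ast,q} = g_{\ast,q}\circ f_{\ast,q}$ from Proposition \ref{proposition:functor1}, to obtain $H_{\ast,q}(\psi\circ\phi;\mathbb{C}) = H_{\ast,q}(\psi;\mathbb{C})\circ H_{\ast,q}(\phi;\mathbb{C})$.

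There is essentially no genuine obstacle here: the statement is a formal corollary of the two propositions, and the entire content is that the composition of functors is a functor. The only point requiring any care is to confirm that the object- and morphism-level definitions of $H_{\ast,q}(-;\mathbb{C})$ coincide with those of the composite $H_{\ast,q}\circ C_{\ast}(-;\mathbb{C})$, which is true by construction. Thus I expect the proof to be a one-line invocation of functor composition, perhaps with the identity- and composition-preservation identities spelled out explicitly for completeness.
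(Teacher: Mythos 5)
Your proof is correct and matches the paper's own argument: the paper likewise deduces the corollary directly from Proposition \ref{proposition:functor1} and Proposition \ref{proposition:functor2}, i.e., as the composition of the functors $C_{\ast}(-;\mathbb{C}):\mathbf{Cpx}\to\mathbf{Nchain}$ and $H_{\ast,q}:\mathbf{Nchain}\to\mathbf{Vec}_{\mathbb{K}}$. Your version simply spells out the identity- and composition-preservation steps that the paper leaves implicit.
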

\begin{proof}
It is a directed corollary of Proposition \ref{proposition:functor1} and Proposition \ref{proposition:functor2}.
\end{proof}

The generalized Mayer homology contains the information of the usual simplicial homology. It is worth noting that the Mayer homology here is different from the simplicial homology. Thus, we can obtain additional topological information from the Mayer homology defined above.
\begin{lemma}
Let $M_{n,q}$ be the representation matrix of $d_{n,q}=d_{n-q+1}\cdots d_{n-1}d_{n}:C_{n}(K;\mathbb{C})\to C_{n-q}(K;\mathbb{C})$. Then we have
\begin{equation}
  \beta_{n,q}=\dim C_{n}(K;\mathbb{C})- \rank (M_{n,q})-\rank (M_{n+N-q,N-q}).
\end{equation}
\end{lemma}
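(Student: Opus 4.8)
The plan is to peel back the definition of the Mayer Betti number layer by layer and reduce everything to two elementary linear-algebra facts: rank-nullity for the cycle space and the rank of a representing matrix for the boundary space. By definition $\beta_{n,q}=\dim H_{n,q}(K;\mathbb{C})=\dim Z_{n,q}-\dim B_{n,q}$, so it suffices to compute each of the two dimensions on the right in terms of the matrices $M_{n,q}$ and $M_{n+N-q,N-q}$.

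First I would handle the cycle term. The key observation is that the composite $d_{n,q}=d_{n-q+1}\cdots d_{n-1}d_n$ is exactly $d^{q}$ restricted to $C_n(K;\mathbb{C})$, so that $Z_{n,q}=\{x\in C_n \mid d^{q}x=0\}=\ker d_{n,q}=\ker M_{n,q}$. Applying the rank-nullity theorem to the linear map represented by $M_{n,q}$ then gives $\dim Z_{n,q}=\dim C_n(K;\mathbb{C})-\rank(M_{n,q})$, which accounts for the first two summands of the claimed formula.

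Next I would handle the boundary term. Here the main point—and the step most prone to index bookkeeping errors—is to read off the correct degree shift in the definition of $B_{n,q}$. Since $d^{N-q}$ has degree $-(N-q)$, the elements of $B_{n,q}$ that land in $C_n$ must come from $C_{n+N-q}$; that is, $B_{n,q}=\im\bigl(d^{N-q}\colon C_{n+N-q}(K;\mathbb{C})\to C_n(K;\mathbb{C})\bigr)=\im(d_{n+N-q,N-q})=\im(M_{n+N-q,N-q})$. Consequently $\dim B_{n,q}=\rank(M_{n+N-q,N-q})$.

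Finally I would assemble the pieces: substituting the two dimension formulas into $\beta_{n,q}=\dim Z_{n,q}-\dim B_{n,q}$ yields $\beta_{n,q}=\dim C_n(K;\mathbb{C})-\rank(M_{n,q})-\rank(M_{n+N-q,N-q})$, as desired. I do not anticipate any genuine obstacle beyond verifying that $d_{n,q}$ indeed realizes $d^{q}$ in degree $n$ and that the source degree for $B_{n,q}$ is $n+N-q$; once those identifications are made, the result is immediate from rank-nullity and the definition of matrix rank as the dimension of the image.
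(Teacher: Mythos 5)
Your proposal is correct and takes essentially the same route as the paper: the paper's short exact sequence $0\to Z_{n,q}\to C_{n}(K;\mathbb{C})\xrightarrow{d_{n,q}} B_{n-q,N-q}\to 0$ is exactly your rank-nullity step (kernel $Z_{n,q}$, image of rank $\rank(M_{n,q})$), and both arguments then finish by identifying $\dim B_{n,q}=\rank(M_{n+N-q,N-q})$ via the degree shift $n+N-q$. Your reading of the source degree for $B_{n,q}$ as $C_{n+N-q}$ is also the correct interpretation of the paper's (typo-marred) definition, so no gap remains.
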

\begin{proof}
Consider the short exact sequence
\begin{equation}
\xymatrix{
  0\ar@{->}[r]& Z_{n,q}\ar@{^{(}->}[r]&C_{n}(K;\mathbb{C}) \ar@{->}[r]^-{{d_{n,q}}}& B_{n-q,N-q} \ar@{->}[r]&0.
  }
\end{equation}
Indeed, we have the decomposition
\begin{equation}
  C_{n}(K;\mathbb{C})\cong Z_{n,q}\oplus B_{n-q,N-q}\cong H_{n,q}(K;\mathbb{C})\oplus B_{n,q} \oplus B_{n-q,N-q}.
\end{equation}
Note that $\rank (M_{n,q})=\dim B_{n-q,N-q}$. It follows that $\dim B_{n,q}=\rank (M_{n+N-q,N-q})$. Thus we have
\begin{equation}
  \dim C_{n}(K;\mathbb{C})=\beta_{n,q} +\rank (M_{n+N-q,N-q})+\rank (M_{n,q}).
\end{equation}
The desired result follows.
\end{proof}

\begin{example}
Consider the simplicial complex $\Delta[3]$ with the simplices
\begin{equation}
\begin{split}
   &  \{0\},\{1\},\{2\},\{3\}, \\
    &  \{0,1\},\{0,2\},\{0,3\},\{1,2\},\{1,3\},\{2,3\},\\
    &\{0,1,2\},\{0,1,3\},\{0,2,3\},\{1,2,3\},\\
    &\{0,1,2,3\}.
\end{split}
\end{equation}
Consider the 3-chain complex $C_{\ast}(\Delta[3];\mathbb{C})$ with the 3-boundary operator given by
\begin{equation}
\begin{split}
d_{3}\{0,1,2,3\}&=\{1,2,3\}+\xi\{0,2,3\}+\xi^{2}\{0,1,3\}+\{0,1,2\},\\
   d_{2}\{0,1,2\}&=\{1,2\}+\xi \{0,2\} + \xi^{2}\{0,1\},\\
   d_{2}\{0,1,3\}&=\{1,3\}+\xi \{0,3\} + \xi^{2}\{0,1\},\\
   d_{2}\{0,2,3\}&=\{2,3\}+\xi \{0,3\} + \xi^{2}\{0,2\},\\
   d_{2}\{1,2,3\}&=\{2,3\}+\xi \{1,3\} + \xi^{2}\{1,2\}\\
\end{split}
\end{equation}
and $d_{1}\{v,w\}=\{w\}+\xi\{v\}$ for $0\leq v<w\leq 3$. The representation matrices of $d_{1}$, $d_{2}$ and $d_{3}$ with the simplices as basis are given by
\begin{equation}
  B_{1} = \left(
    \begin{array}{cccc}
      \xi & 1 & 0 & 0 \\
      \xi & 0 & 1 & 0 \\
      \xi & 0 & 0 & 1 \\
      0 & \xi & 1 & 0 \\
      0 & \xi & 0 & 1 \\
      0 & 0 & \xi & 1 \\
    \end{array}
  \right),\quad B_{2}=\left(
                        \begin{array}{cccccc}
                          \xi^{2} & \xi & 0 & 1 & 0 & 0 \\
                          \xi^{2} & 0 & \xi & 0 & 1 & 0 \\
                          0 & \xi^{2} & \xi & 0 & 0 & 1 \\
                          0 & 0 & 0 & \xi^{2} & \xi & 1 \\
                        \end{array}
                      \right),\quad B_{3}=\left(
                                            \begin{array}{cccc}
                                              1 & \xi^{2} & \xi & 1 \\
                                            \end{array}
                                          \right).
\end{equation}
The representation matrices of $d_{1}d_{2}$ and $d_{2}d_{3}$ are listed as follows.
\begin{equation*}
  B_{2}B_{1}=\left(
               \begin{array}{cccc}
                 -\xi & -1 & -\xi^{2} & 0 \\
                 -\xi & -1 & 0 & -\xi^{2} \\
                 -\xi & 0 & -1 & -\xi^{2} \\
                 0 & -\xi & -1 & -\xi^{2} \\
               \end{array}
             \right),\quad B_{3}B_{2}=\left(
                                        \begin{array}{cccccc}
                                          -1 & -\xi^{2} & -\xi & -\xi & -1 &-\xi^{2} \\
                                        \end{array}
                                      \right).
\end{equation*}
Moreover, have have that
\begin{equation*}
  B_{3}B_{2}B_{1}=\mathbf{O}_{4\times 4},
\end{equation*}
which shows that $d^{3}=0$ on $C_{\ast}(\Delta[3];\mathbb{C})$. On the other hand, a straightforward calculation shows that
\begin{equation}
  \begin{split}
    Z_{3,1} & =Z_{3,2}= Z_{2,1}= B_{2,1}=0 , \\
    Z_{2,2} & =\mathrm{span}\{\{1,2,3\}+\xi\{0,2,3\}+\xi^{2}\{0,1,3\}+\{0,1,2\}\},\\
    B_{2,2} & =\mathrm{span}\{\{1,2,3\}+\xi\{0,2,3\}+\xi^{2}\{0,1,3\}+\{0,1,2\}\}, \\
    Z_{1,1} & =\mathrm{span}\{\{0,2\}-\{0,3\}-\{1,2\}+\{1,3\},\xi \{0,1\}-\xi \{0,2\}-\{1,3\}+\{2,3\}\}, \\
    B_{1,1} & =\mathrm{span}\{\xi\{0,1\}+\{0,2\}+\xi^{2}\{0,3\}+\xi^{2}\{1,2\}+\xi \{1,3\}+\{2,3\}\}, \\
    Z_{1,2} & =\mathrm{span}\{\{0,1\},\{0,2\},\{0,3\},\{1,2\},\{1,3\},\{2,3\}\}, \\
    B_{1,2} & =\mathrm{span}\{\{1,2\}+\xi \{0,2\} + \xi^{2}\{0,1\},\{1,3\}+\xi \{0,3\}+ \xi^{2}\{0,1\}, \\
    &\qquad\qquad\{2,3\}+\xi \{0,3\} + \xi^{2}\{0,2\},\{2,3\}+\xi \{1,3\} + \xi^{2}\{1,2\}\}, \\
    Z_{0,1} &=\mathrm{span}\{\{0\},\{1\},\{2\},\{3\}\}, \\
    B_{0,1} &=\mathrm{span}\{\{0\}-\{1\},\{1\}-\{2\},\{2\}-\{3\}\},\\
    Z_{0,2} &=\mathrm{span}\{\{0\},\{1\},\{2\},\{3\}\},\\
    B_{0,2} &=\mathrm{span}\{\{0\},\{1\},\{2\},\{3\}\},\\
  \end{split}
\end{equation}
By definition, one has
\begin{equation}
  H_{3,1}(\Delta[3];\mathbb{C})=H_{3,2}(\Delta[3];\mathbb{C})=H_{2,2}(\Delta[3];\mathbb{C})=H_{2,1}(\Delta[3];\mathbb{C})=H_{0,2}(\Delta[3];\mathbb{C})=0
\end{equation}
and
\begin{equation}
  H_{1,1}(\Delta[3];\mathbb{C})\cong \mathbb{C},\quad H_{1,2}(\Delta[3];\mathbb{C})\cong\mathbb{C}^{2},\quad H_{0,1}(\Delta[3];\mathbb{C})\cong \mathbb{C}.
\end{equation}
However, the simplicial homology of $\Delta[3]$ is $H_{n}(\Delta[3];\mathbb{C})=\left\{
                                                                                  \begin{array}{ll}
                                                                                    \mathbb{C}, & \hbox{$n=0$;} \\
                                                                                    0, & \hbox{\rm otherwise.}
                                                                                  \end{array}
                                                                                \right.
$
This indicates that even for contractible spaces, Mayer-Vietoris homology may not be trivial.
\end{example}

\begin{example}\label{example:mayer_betti}
Many common geometric shapes can be viewed as simplicial complexes through simplicial triangulations. In this example, we compute the Mayer Betti numbers for the simplicial complexes $\Delta[3]$, $\partial\Delta[3]$, and a hexagon. Additionally, we perform simplicial triangulations for the M\"{o}bius strip, torus, and octahedron, and calculate the Mayer Betti numbers for these simplicial complexes. The simplicial complex $\partial \Delta[3]$ has the simplices listed as follows:
\begin{equation}
\begin{split}
   &  \{0\},\{1\},\{2\},\{3\}, \\
    &  \{0,1\},\{0,2\},\{0,3\},\{1,2\},\{1,3\},\{2,3\},\\
    &\{0,1,2\},\{0,1,3\},\{0,2,3\},\{1,2,3\}.
\end{split}
\end{equation}
A hexagon is a simplicial complex with the simplices listed as follows:
\begin{equation}
  \begin{split}
   &  \{0\},\{1\},\{2\},\{3\},\{4\},\{5\}, \\
    &  \{0,1\},\{1,2\},\{2,3\},\{3,4\},\{4,5\},\{0,5\}.
\end{split}
\end{equation}
Now, we provide simplicial triangulations for the M\"{o}bius strip, torus, and octahedron, and compute the corresponding Mayer Betti numbers.

\begin{figure}[H]
  \centering
  \includegraphics[width=0.4\textwidth]{./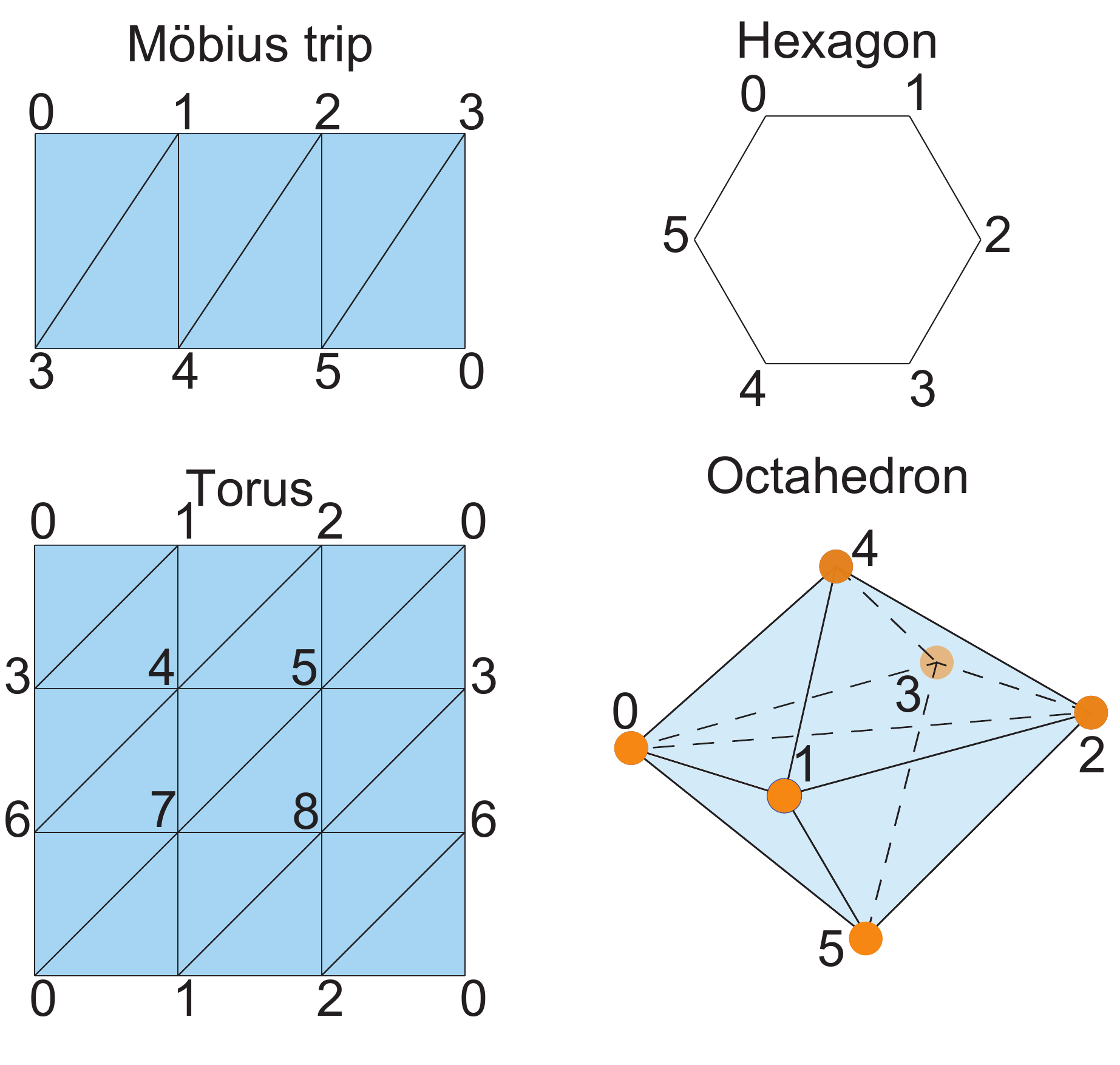}\qquad\qquad
  \caption{The simplicial triangulations of the M\"{o}bius strip, hexagon, torus, and octahedron.}\label{figure:complexes}
\end{figure}
\noindent The simplicial triangulations of the M\"{o}bius strip:
\begin{equation}
  \begin{split}
   &  \{0\},\{1\},\{2\},\{3\},\{4\},\{5\}, \\
    &  \{0,1\},\{0,5\},\{1,2\},\{2,3\},\{0,3\},\{1,3\},\{1,4\},\{2,4\},\{2,5\},\{3,4\},\{3,5\},\{4,5\},\\
    & \{0,1,3\},\{1,3,4\},\{1,2,4\},\{2,4,5\},\{2,3,5\},\{0,3,5\}.
\end{split}
\end{equation}
\noindent The simplicial triangulations of the torus:
\begin{equation}
  \begin{split}
   &  \{0\},\{1\},\{2\},\{3\},\{4\},\{5\},\{6\},\{7\},\{8\}, \\
    & \{0,1\},\{0,2\},\{0,3\},\{0,5\},\{0,6\},\{0,7\},\{1,2\},\{1,3\},\{1,4\},\{1,7\},\{1,8\},\{2,4\},\{2,5\},\{2,6\},\{2,8\},\\
    &\{3,4\},\{3,5\},\{3,6\},\{3,8\},\{4,5\},\{4,6\},\{4,7\},\{5,7\},\{5,8\},\{6,7\},\{6,8\},\{7,8\},\\
    & \{0,1,3\},\{0,1,7\},\{0,2,5\},\{0,2,6\},\{0,3,5\},\{0,6,7\},\{1,2,4\},\{1,3,4\},\{1,2,8\},\{1,7,8\},\\
    &  \{2,4,5\},\{2,6,8\},\{3,5,8\}, \{3,4,6\},\{3,6,8\},\{4,5,7\},\{4,6,7\},\{5,7,8\}.
\end{split}
\end{equation}
\noindent The simplicial triangulations of the octahedron:
\begin{equation}
  \begin{split}
   &  \{0\},\{1\},\{2\},\{3\},\{4\},\{5\}, \\
    &  \{0,1\},\{1,2\},\{2,3\},\{0,3\},\{0,4\},\{1,4\},\{2,4\},\{3,4\},\{0,5\},\{1,5\},\{2,5\},\{3,5\},\{0,2\}\\
    & \{0,1,4\},\{0,1,5\},\{1,2,4\},\{1,2,5\},\{2,3,4\}, \{2,3,5\},\{0,3,4\},\{0,3,5\},\{0,1,2\},\{0,2,3\},\{0,2,4\},\{0,2,5\},\\
    & \{0,1,2,4\},\{0,1,2,5\},\{0,2,3,4\},\{0,2,3,5\}.
\end{split}
\end{equation}
Using our algorithm's computations, Mayer Betti numbers can be obtained, as illustrated in Table \ref{table:complexes_betti}.
\begin{table}[H]
  \centering
  \caption{The Mayer Betti numbers for the simplicial complexes $\Delta[3]$, $\partial\Delta[3]$, a hexagon, and the Mayer Betti numbers resulting from the simplicial triangulations of the M\"{o}bius strip, torus, and octahedron.}\label{table:complexes_betti}
  \begin{tabular}{c|c|c|c|c|c|c}
  \hline
 simplicial complexes & $\beta_{0,1}$ & $\beta_{1,1}$ & $\beta_{2,1}$ & $\beta_{0,2}$ & $\beta_{1,2}$ & $\beta_{2,2}$\\
  \hline
  $\Delta[3]$ & 1 & 1 & 0 & 0 & 2 & 0\\
  $\partial\Delta[3]$ & 1 & 2 & 0 & 0 & 2 & 1\\
  Hexagon & 6 & 0 & 0 & 0 & 6 & 0\\
  M\"{o}bius trip& 1 & 6 & 0 & 0 & 6 & 1\\
  Torus & 1 & 18 & 0 & 0 & 9 & 10\\
  Octahedron & 1 & 3 & 1 & 0 & 2 & 3\\
  \hline
\end{tabular}
\end{table}
\end{example}

\subsection{The Mayer Laplacians on $N$-chain complexes}
Now, let $K$ be a simplicial complex. Then we have a chain complex $(C_{\ast}(K;\mathbb{C}),d)$. One can endow $C_{\ast}(K;\mathbb{C})$ with an inner product given by
\begin{equation}
  \langle \lambda\sigma,\mu\tau\rangle=\left\{
                               \begin{array}{ll}
                                 \lambda\cdot\overline{\mu}, & \hbox{$\sigma=\tau$;} \\
                                 0, & \hbox{\rm otherwise.}
                               \end{array}
                             \right.
\end{equation}
Here, $\lambda,\mu\in \mathbb{C}$, and $\overline{\mu}$ is the complex conjugate of $\mu$.
Consider the adjoint functor $d^{\ast}$ of $d$, i.e.,
\begin{equation}
  \langle dx,y\rangle=\langle x,d^{\ast}y\rangle
\end{equation}
for any $x,y\in C_{\ast}(K;\mathbb{C})$. Note that
\begin{equation}
  \langle d^{q}x,y\rangle=\langle d^{q-1}x,d^{\ast}y\rangle=\cdots=\langle x,(d^{\ast})^{q}y\rangle.
\end{equation}
By the definiteness of inner product, one has $(d^{q})^{\ast}=(d^{\ast})^{q}$.
For $1\leq q\leq N-1$, the \emph{Mayer Laplacian} $\Delta_{\ast,q}:C_{\ast}(K;\mathbb{C})\to C_{\ast}(K;\mathbb{C})$ is defined as
\begin{equation}
  \Delta_{\ast,q}:=(d^{q})^{\ast}\circ d^{q}+ d^{N-q}\circ (d^{N-q})^{\ast}.
\end{equation}
Choose the simplices of $K$ as an orthogonal basis of the $N$-chain complex $C_{\ast}(K;\mathbb{C})$ over $\mathbb{C}$. Let $B$ be the representation matrix of the linear operator $d:C_{\ast}(K;\mathbb{C})\to C_{\ast-1}(K;\mathbb{C})$ with respect to the chosen orthogonal basis under left multiplication. Then the representation matrix of $\Delta_{\ast,q}$ is given by
\begin{equation}
  L_{q}=B^{q}(\overline{B}^{q})^{T}+(\overline{B}^{N-q})^{T}B^{N-q}.
\end{equation}
Here, $\overline{B}^{T}$ is the conjugate transpose or Hermitian transpose matrix of $B$.
For the graded case, the Mayer Laplacian $\Delta_{n,q}:C_{n}(K;\mathbb{C})\to C_{n}(K;\mathbb{C})$ is given by
\begin{equation}
  \Delta_{n,q}=(d_{n})^{\ast}\circ\cdots\circ(d_{n-q+1})^{\ast} \circ d_{n-q+1}\circ\cdots\circ d_{n}+ d_{n+1}\circ\cdots\circ d_{n+N-q}\circ(d_{n+N-q})^{\ast}\circ\cdots\circ(d_{n+1})^{\ast}.
\end{equation}
Here, $d_{n}:C_{n}(K;\mathbb{C})\to C_{n-1}(K;\mathbb{C})$ is the operator of $d$ restricted to $C_{n}(K;\mathbb{C})$. Let $B_{n}$ be the representation matrix of $d_{n}$  with respect to the chosen orthogonal basis, and the representation matrix of $\Delta_{n,q}$ is given by
\begin{equation}
 L_{n,q}=B_{n}\cdots B_{n-q+1}\overline{B_{n-q+1}}^{T}\cdots \overline{B_{n}}^{T}+ \overline{B_{n+1}}^{T}\cdots \overline{B_{n+N-q}}^{T}B_{n+N-q} \cdots B_{n+1}.
\end{equation}
Here, $B_{n}$ is a complex matrix and $\overline{B_{n}}^{T}$ is the conjugate transpose of $B_{n}$.
\begin{proposition}\label{proposition:laplacian}
The Laplacian $\Delta_{n,q}$ on $C_{n}(K;\mathbb{C})$ is a self-adjoint and non-negative definite operator.
\end{proposition}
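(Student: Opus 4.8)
The plan is to reduce both assertions to the elementary fact that any operator of the form $A^{\ast}A$ (or $BB^{\ast}$) is automatically self-adjoint and non-negative, using the identity $(d^{q})^{\ast}=(d^{\ast})^{q}$ already recorded above. I would write $\Delta_{n,q}=P+Q$, where $P=(d^{q})^{\ast}\circ d^{q}$ and $Q=d^{N-q}\circ(d^{N-q})^{\ast}$ denote the two summands, each understood as an operator on $C_{n}(K;\mathbb{C})$, so that in $P$ the map $d^{q}$ sends $C_{n}$ to $C_{n-q}$ and in $Q$ the map $(d^{N-q})^{\ast}$ sends $C_{n}$ to $C_{n+N-q}$.

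First I would verify self-adjointness. Since taking the adjoint is an involution that reverses composition, one has $P^{\ast}=\bigl((d^{q})^{\ast}\circ d^{q}\bigr)^{\ast}=(d^{q})^{\ast}\circ\bigl((d^{q})^{\ast}\bigr)^{\ast}=(d^{q})^{\ast}\circ d^{q}=P$, and likewise $Q^{\ast}=Q$. Because the sum of two self-adjoint operators is self-adjoint, it follows that $\Delta_{n,q}^{\ast}=P^{\ast}+Q^{\ast}=\Delta_{n,q}$. At the matrix level this is the statement that $L_{n,q}$ equals its own conjugate transpose, which is immediate from the expression $L_{q}=B^{q}(\overline{B}^{q})^{T}+(\overline{B}^{N-q})^{T}B^{N-q}$, each summand being a Hermitian Gram-type matrix.

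Next I would verify non-negative definiteness by pairing with an arbitrary vector. For any $x\in C_{n}(K;\mathbb{C})$, applying the defining adjoint relation $\langle dx,y\rangle=\langle x,d^{\ast}y\rangle$ repeatedly, I obtain
\[
\langle \Delta_{n,q}x,x\rangle=\langle (d^{q})^{\ast}d^{q}x,x\rangle+\langle d^{N-q}(d^{N-q})^{\ast}x,x\rangle=\langle d^{q}x,d^{q}x\rangle+\langle (d^{N-q})^{\ast}x,(d^{N-q})^{\ast}x\rangle.
\]
Both terms are squared norms, equal to $\|d^{q}x\|^{2}$ and $\|(d^{N-q})^{\ast}x\|^{2}$ respectively, hence non-negative, so $\langle \Delta_{n,q}x,x\rangle\geq 0$ for all $x$.

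There is essentially no hard step here; the only point requiring care is the bookkeeping of domains and codomains in the graded setting, namely confirming that $(d^{q})^{\ast}=(d^{\ast})^{q}$ so that the graded composition $(d_{n})^{\ast}\circ\cdots\circ(d_{n-q+1})^{\ast}$ is genuinely the adjoint of $d_{n-q+1}\circ\cdots\circ d_{n}$. This has already been established above via the chain $\langle d^{q}x,y\rangle=\cdots=\langle x,(d^{\ast})^{q}y\rangle$, so the computation carries over verbatim to each graded piece and the proposition follows.
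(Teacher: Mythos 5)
Your proof is correct and is precisely the ``straightforward verification'' that the paper itself omits, deferring instead to a citation: self-adjointness from $(AB)^{\ast}=B^{\ast}A^{\ast}$ applied to each summand, and non-negativity from $\langle \Delta_{n,q}x,x\rangle=\|d^{q}x\|^{2}+\|(d^{N-q})^{\ast}x\|^{2}\geq 0$. This is the same standard Hodge-theoretic argument intended by the paper, so there is nothing to add.
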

The proof of Proposition \ref{proposition:laplacian} is a straightforward verification, one can refer to \cite{chen2023persistent}. It is worth noting that even over the complex number field $\mathbb{C}$, the eigenvalues of the Laplacian operator are non-negative.

\begin{proposition}\label{proposition:equal_betti}
For any $n$ and $1\leq q\leq N-1$, we have $\dim\ker\Delta_{n,q}=\beta_{n,q}$.
\end{proposition}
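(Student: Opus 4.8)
The plan is to establish a combinatorial Hodge decomposition that identifies $\ker\Delta_{n,q}$ with the Mayer homology $H_{n,q}(K;\mathbb{C})$. Write $A := d_{n-q+1}\cdots d_n \colon C_n(K;\mathbb{C})\to C_{n-q}(K;\mathbb{C})$ and $B := d_{n+1}\cdots d_{n+N-q}\colon C_{n+N-q}(K;\mathbb{C})\to C_n(K;\mathbb{C})$, so that the graded formula for the Laplacian reads $\Delta_{n,q} = A^{\ast}A + BB^{\ast}$. By definition $Z_{n,q} = \ker A$ and $B_{n,q} = \im B$, while the composite $AB = d^{q}\circ d^{N-q} = d^{N} = 0$ yields the inclusion $\im B\subseteq\ker A$ (equivalently $B_{n,q}\subseteq Z_{n,q}$). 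Since $H_{n,q}(K;\mathbb{C}) = Z_{n,q}/B_{n,q}$, it suffices to prove $\dim\ker\Delta_{n,q} = \dim\ker A - \dim\im B$.

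First I would show $\ker\Delta_{n,q} = \ker A\cap\ker B^{\ast}$. By Proposition \ref{proposition:laplacian} the operator $\Delta_{n,q}$ is self-adjoint, so for $x\in C_n(K;\mathbb{C})$ one has
\begin{equation*}
\langle\Delta_{n,q}x,x\rangle = \langle A^{\ast}Ax,x\rangle + \langle BB^{\ast}x,x\rangle = \|Ax\|^{2} + \|B^{\ast}x\|^{2}.
\end{equation*}
If $\Delta_{n,q}x = 0$, the left-hand side vanishes, forcing $Ax = 0$ and $B^{\ast}x = 0$ by positive-definiteness of the Hermitian inner product; the reverse inclusion is immediate. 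Hence $\ker\Delta_{n,q} = \ker A\cap\ker B^{\ast}$.

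Next I would assemble the orthogonal decomposition. Working in the finite-dimensional Hermitian space $C_n(K;\mathbb{C})$, the standard identity $\ker B^{\ast} = (\im B)^{\perp}$ holds, so $\ker\Delta_{n,q} = \ker A\cap(\im B)^{\perp}$. Because $\im B\subseteq\ker A$, I can split the subspace $\ker A$ along $\im B$ and its orthogonal complement taken inside $\ker A$: for nested subspaces $\im B\subseteq\ker A$ of a finite-dimensional inner product space one has the orthogonal direct sum $\ker A = \im B\oplus\bigl(\ker A\cap(\im B)^{\perp}\bigr) = \im B\oplus\ker\Delta_{n,q}$. Taking dimensions gives
\begin{equation*}
\dim\ker\Delta_{n,q} = \dim\ker A - \dim\im B = \dim Z_{n,q} - \dim B_{n,q} = \dim H_{n,q}(K;\mathbb{C}) = \beta_{n,q},
\end{equation*}
which is the assertion.

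The argument is essentially the finite-dimensional Hodge theorem, so no step is genuinely hard; the only points demanding care are bookkeeping ones. I must identify the two summands of $\Delta_{n,q}$ correctly as $A^{\ast}A$ and $BB^{\ast}$ (in particular confirming that the degrees match, with $B$ mapping $C_{n+N-q}(K;\mathbb{C})$ into $C_n(K;\mathbb{C})$), and I must invoke the relation $d^{N}=0$ in its composite form $AB = 0$ to guarantee $\im B\subseteq\ker A$; without this inclusion the clean splitting of $\ker A$ fails. Everything else reduces to positivity of the Hermitian inner product together with the elementary orthogonal-complement identity for nested subspaces, so I expect the verification of $\im B \subseteq \ker A$ to be the only substantive, albeit brief, obstacle.
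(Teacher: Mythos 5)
Your proof is correct, and it is precisely the standard finite-dimensional Hodge-theory argument: identify $\ker\Delta_{n,q}=\ker A\cap\ker B^{\ast}$ via positivity of $\langle\Delta_{n,q}x,x\rangle=\|Ax\|^{2}+\|B^{\ast}x\|^{2}$, use $\ker B^{\ast}=(\im B)^{\perp}$ and the inclusion $\im B\subseteq\ker A$ coming from $d^{N}=0$ to get $\ker A=\im B\oplus\ker\Delta_{n,q}$, and take dimensions. The paper does not spell this out at all --- it simply calls the statement a classic result and cites an external reference --- so your write-up is a self-contained version of exactly the argument that citation stands in for, with the bookkeeping (the identification of the two summands as $A^{\ast}A$ and $BB^{\ast}$, and the role of $AB=0$) done correctly.
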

\begin{proof}
It is a classic result. One can obtain a detailed proof in a \cite{liu2023algebraic}.
\end{proof}

\begin{example}
Let us compute the Mayer Laplacians on $\partial \Delta[3]$. We can obtain the $N$-chain complex $C_{\ast}(\partial \Delta[3];\mathbb{C})$ with the differential given by $d_{0}=0$,
\begin{equation}
  d_{1}\left(
                   \begin{array}{c}
                     \{0,1\} \\
                     \{0,2\} \\
                     \{0,3\} \\
                     \{1,2\} \\
                     \{1,3\} \\
                     \{2,3\} \\
                   \end{array}
                 \right)=\left(
                           \begin{array}{cccc}
                             \xi & 1 & 0 & 0 \\
                             \xi & 0 & 1 & 0 \\
                             \xi & 0 & 0 & 1 \\
                             0 & \xi & 1 & 0 \\
                             0 & \xi & 0 & 1 \\
                             0 & 0 & \xi & 1 \\
                           \end{array}
                         \right)\left(
                                  \begin{array}{c}
                                    \{0\} \\
                                    \{1\} \\
                                    \{2\} \\
                                    \{3\} \\
                                  \end{array}
                                \right)
\end{equation}
and
\begin{equation}
  d_{2}\left(
    \begin{array}{c}
      \{0,1,2\} \\
      \{0,1,3\} \\
      \{0,2,3\} \\
      \{1,2,3\} \\
    \end{array}
  \right)=\left(
            \begin{array}{cccccc}
              \xi^{2} & \xi & 0 & 1 & 0 & 0 \\
              \xi^{2} & 0 & \xi & 0 & 1 & 0 \\
              0 & \xi^{2} & \xi & 0 & 0 & 1 \\
              0 & 0 & 0 & \xi^{2} & \xi & 1 \\
            \end{array}
          \right)\left(
                   \begin{array}{c}
                     \{0,1\} \\
                     \{0,2\} \\
                     \{0,3\} \\
                     \{1,2\} \\
                     \{1,3\} \\
                     \{2,3\} \\
                   \end{array}
                 \right).
\end{equation}
We denote the representation matrix of $d_{n}$ by $B_{n}$. Observe that $B_{0}=B_{3}=0$. It follows that
\begin{equation}
  L_{0,1}=\overline{B_{1}}^{T}\overline{B_{2}}^{T}B_{2}B_{1}=\left(
                                                               \begin{array}{cccc}
                                                                 -\xi^{2} & -\xi^{2} & -\xi^{2} & 0 \\
                                                                 -1  & -1 & 0 & -\xi^{2} \\
                                                                 -\xi & 0 & -1 & -1 \\
                                                                 0 & -\xi & -\xi & -\xi \\
                                                               \end{array}
                                                             \right)\left(
                                                                      \begin{array}{cccc}
                                                                        -\xi & -1 & -\xi^{2} & 0 \\
                                                                        -\xi & -1 & 0 & -\xi^{2} \\
                                                                        -\xi & 0 & -1 & -\xi^{2} \\
                                                                        0 & -\xi & -1 & -\xi^{2} \\
                                                                      \end{array}
                                                                    \right)=\left(
                                                                              \begin{array}{cccc}
                                                                                3 & 2\xi^{2} & -1 & 2\xi \\
                                                                                2\xi & 3 & 2\xi^{2} & -1 \\
                                                                                -1 & 2\xi & 3 & 2\xi^{2} \\
                                                                                2\xi^{2} & -1 & 2\xi & 3 \\
                                                                              \end{array}
                                                                            \right),
\end{equation}
\begin{equation}
  L_{0,2}=\overline{B_{1}}^{T}B_{1}=\left(
                                      \begin{array}{cccccc}
                                        \xi^{2} & \xi^{2} & \xi^{2} & 0 & 0 & 0 \\
                                        1 & 0 & 0 & \xi^{2} & \xi^{2} & 0 \\
                                        0 & 1 & 0 & 1 & 0 & \xi^{2} \\
                                        0 & 0 & 1 & 0 & 1 & 1 \\
                                      \end{array}
                                    \right)\left(
                           \begin{array}{cccc}
                             \xi & 1 & 0 & 0 \\
                             \xi & 0 & 1 & 0 \\
                             \xi & 0 & 0 & 1 \\
                             0 & \xi & 1 & 0 \\
                             0 & \xi & 0 & 1 \\
                             0 & 0 & \xi & 1 \\
                           \end{array}
                         \right)=\left(
                                   \begin{array}{cccc}
                                     3 & \xi^{2} & \xi^{2} & \xi^{2} \\
                                     \xi & 3 & \xi^{2} & \xi^{2} \\
                                     \xi & \xi & 3 & \xi^{2} \\
                                     \xi & \xi & \xi & 3 \\
                                   \end{array}
                                 \right),
\end{equation}
\begin{equation}
    L_{1,1}=B_{1}\overline{B_{1}}^{T}+\overline{B_{2}}^{T}\overline{B_{3}}^{T}B_{3}B_{2}=\left(
                                                                                           \begin{array}{cccccc}
                                                                                             2 & 1 & 1 & \xi^{2} & \xi^{2} & 0 \\
                                                                                             1 & 2 & 1 & 1 & 0 & \xi^{2} \\
                                                                                             1 & 1 & 2 & 0 & 1 & 1 \\
                                                                                             \xi & 1 & 0 & 2 & 1 & \xi^{2} \\
                                                                                             \xi & 0 & 1 & 1 & 2 & 1 \\
                                                                                             0 & \xi & 1 & \xi & 1 & 2 \\
                                                                                           \end{array}
                                                                                         \right),
\end{equation}
\begin{equation}
    L_{1,2}=\overline{B_{2}}^{T}B_{2}=\left(
                                        \begin{array}{cccc}
                                          \xi & \xi & 0 & 0 \\
                                          \xi^{2} & 0 & \xi & 0 \\
                                          0 & \xi^{2} & \xi^{2} & 0 \\
                                          1 & 0 & 0 & \xi \\
                                          0 & 1 & 0 & \xi^{2} \\
                                          0 & 0 & 1 & 1 \\
                                        \end{array}
                                      \right)
    \left(
            \begin{array}{cccccc}
              \xi^{2} & \xi & 0 & 1 & 0 & 0 \\
              \xi^{2} & 0 & \xi & 0 & 1 & 0 \\
              0 & \xi^{2} & \xi & 0 & 0 & 1 \\
              0 & 0 & 0 & \xi^{2} & \xi & 1 \\
            \end{array}
          \right)=\left(
                    \begin{array}{cccccc}
                      2 & \xi^{2} & \xi^{2} & \xi & \xi & 0 \\
                      \xi & 2 & \xi^{2} & \xi^{2} & 0 & \xi \\
                      \xi & \xi & 2 & 0 & \xi^{2} & \xi^{2} \\
                      \xi^{2} & \xi & 0 & 2 & \xi^{2} & \xi \\
                      \xi^{2} & 0 & \xi & \xi & 2 & \xi^{2} \\
                      0 & \xi^{2} & \xi & \xi^{2} & \xi & 2 \\
                    \end{array}
                  \right).
\end{equation}
The spectra of $L_{0,1}$, $L_{0,2}$, $L_{1,1}$, and $L_{1,2}$ are
\begin{equation}
  \begin{split}
    \mathbf{Spec}(L_{0,1}) = & \{0,4-2\sqrt{3},4,4+2\sqrt{3}\}, \\
    \mathbf{Spec}(L_{0,2}) = & \{2-\sqrt{3},3,5,2+\sqrt{3}\},\\
    \mathbf{Spec}(L_{1,1}) = & \{0,0,2-\sqrt{3},3,5,2+\sqrt{3}\}, \\
    \mathbf{Spec}(L_{1,2}) = & \{0,0,2-\sqrt{3},3,2+\sqrt{3},5\}.\\
  \end{split}
\end{equation}
Let $N(\Delta_{n,q})$ denote the number of zero eigenvalues of the operator $\Delta_{n,q}$. It is worth noting that $N(\Delta_{0,1})=1$, $N(\Delta_{0,2})=0$, $N(\Delta_{1,1})=2$, $N(\Delta_{2,2})=2$. This is consistent with the Betti numbers corresponding to Table \ref{table:complexes_betti}.
\end{example}

\begin{example}\label{example:hexagon}
Now, we will compute the Mayer Laplacians of the hexagon. As described in Example \ref{example:mayer_betti}, the $3$-chain of a hexagon is a graded vector space with the corresponding $3$-differential given by
\begin{equation}
d_{1}\left(
       \begin{array}{c}
         \{0,1\} \\
         \{1,2\} \\
         \{2,3\} \\
         \{3,4\} \\
         \{4,5\} \\
         \{0,5\} \\
       \end{array}
     \right)=\left(
               \begin{array}{cccccc}
                 \xi & 1 & 0 & 0 & 0 & 0 \\
                 0 & \xi & 1 & 0 & 0 & 0 \\
                 0 & 0 & \xi & 1 & 0 & 0 \\
                 0 & 0 & 0 & \xi & 1 & 0 \\
                 0 & 0 & 0 & 0 & \xi & 1 \\
                 \xi & 0 & 0 & 0 & 0 & 1\\
               \end{array}
             \right)
     \left(
               \begin{array}{c}
                 \{0\} \\
                 \{1\}  \\
                 \{2\}  \\
                 \{3\}  \\
                 \{4\}  \\
                 \{5\}  \\
               \end{array}
             \right)
\end{equation}
and $d_{n}=0$ for $n\neq 1$. The calculation for $N=3$ is shown in Table \ref{table:hexagon_3}.
\begin{table}[H]
  \centering
  \caption{Illustration of Mayer Laplacians for $N=3$.}\label{table:hexagon_3}
  \begin{small}
  \begin{tabular}{c|c|c|c|c}
    \hline
    $n$ & $n=0$,$q=1$ & $n=0$,$q=2$& $n=1$,$q=1$ &$n=1$,$q=2$ \\
    \hline
    $L_{n,q}$ & $\mathbf{O}_{6\times 6}$
     & $\left(
          \begin{array}{cccccc}
            2 & \xi^{2} & 0 & 0 & 0 & 1 \\
            \xi & 2 & \xi^{2} & 0 & 0 & 0 \\
            0 & \xi & 2 & \xi^{2} & 0 & 0 \\
            0 & 0 & \xi & 2 & \xi^{2} & 0 \\
            0 & 0 & 0 & \xi & 2 & 1 \\
            1 & 0 & 0 & 0 & 1 & 2 \\
          \end{array}
        \right)
     $
     & $\left(
          \begin{array}{cccccc}
            2 & \xi^{2} & 0 & 0 & 0 & 1 \\
            \xi & 2 & \xi^{2} & 0 & 0 & 0 \\
            0 & \xi & 2 & \xi^{2} & 0 & 0 \\
            0 & 0 & \xi & 2 & \xi^{2} & 0 \\
            0 & 0 & 0 & \xi & 2 & 1 \\
            1 & 0 & 0 & 0 & 1 & 2 \\
          \end{array}
        \right)
     $& $\mathbf{O}_{6\times 6}$\\
         \hline
    $\beta_{n,q}$ & 6 & 0 & 0 &6\\
      \hline
    $\mathbf{Spec}(L_{n,q})$ &\{0,0,0,0,0,0\}& \{0.12,0.47,1.65,2.35,3.53,3.88\} & \{0.12,0.47,1.65,2.35,3.53,3.88\} &\{0,0,0,0,0,0\}\\
    \hline
  \end{tabular}
  \end{small}
  \end{table}
\noindent For the case $N=5$, we have the corresponding $5$-differential given by
\begin{equation}
d_{1}\left(
       \begin{array}{c}
         \{0,1\} \\
         \{1,2\} \\
         \{2,3\} \\
         \{3,4\} \\
         \{4,5\} \\
         \{0,5\} \\
       \end{array}
     \right)=\left(
               \begin{array}{cccccc}
                 \xi_{5} & 1 & 0 & 0 & 0 & 0 \\
                 0 & \xi_{5} & 1 & 0 & 0 & 0 \\
                 0 & 0 & \xi_{5} & 1 & 0 & 0 \\
                 0 & 0 & 0 & \xi_{5} & 1 & 0 \\
                 0 & 0 & 0 & 0 & \xi_{5} & 1 \\
                 \xi_{5} & 0 & 0 & 0 & 0 & 1 \\
               \end{array}
             \right)
     \left(
               \begin{array}{c}
                 \{0\} \\
                 \{1\}  \\
                 \{2\}  \\
                 \{3\}  \\
                 \{4\}  \\
                 \{5\}  \\
               \end{array}
             \right)
\end{equation}
and $d_{n}=0$ for $n\neq 1$. The calculated result at this point is shown in Table \ref{table:hexagon_5}.
\begin{table}[H]
  \centering
  \caption{Illustration of Mayer Laplacians for $N=5$.}\label{table:hexagon_5}
  \begin{small}
  \begin{tabular}{c|c|c|c|c}
    \hline
    $n$ & $n=0$,$q=1$ & $n=0$,$q=2$& $n=0$,$q=3$ &$n=0$,$q=4$ \\
    \hline
    $L_{n,q}$ & $\mathbf{O}_{6\times 6}$
     & $\mathbf{O}_{6\times 6}$
     & $\mathbf{O}_{6\times 6}$ & $\left(
          \begin{array}{cccccc}
            2 & \xi^{4}_{5} & 0 & 0 & 0 & 1 \\
            \xi_{5} & 2 & \xi^{4}_{5} & 0 & 0 & 0 \\
            0 & \xi_{5} & 2 & \xi^{4}_{5} & 0 & 0 \\
            0 & 0 & \xi_{5} & 2 & \xi^{4}_{5} & 0 \\
            0 & 0 & 0 & \xi_{5} & 2 & 1 \\
            1 & 0 & 0 & 0 & 1 & 2 \\
          \end{array}
        \right)$\\
         \hline
    $\beta_{n,q}$ & 6 & 6 & 6 &0\\
      \hline
    $\mathbf{Spec}(L_{n,q})$ &\{0,0,0,0,0,0\}& \{0,0,0,0,0,0\} & \{0,0,0,0,0,0\} &\{0.04,0.66,1.38,2.62,3.34,3.96\}\\
    \hline
  \end{tabular}
  \end{small}

  \begin{small}
  \begin{tabular}{c|c|c|c|c}
    \hline
    $n$ & $n=1$,$q=1$ & $n=1$,$q=2$& $n=1$,$q=3$ &$n=1$,$q=4$ \\
    \hline
    $L_{n,q}$ & $\left(
          \begin{array}{cccccc}
            2 & \xi^{4}_{5} & 0 & 0 & 0 & 1 \\
            \xi_{5} & 2 & \xi^{4}_{5} & 0 & 0 & 0 \\
            0 & \xi_{5} & 2 & \xi^{4}_{5} & 0 & 0 \\
            0 & 0 & \xi_{5} & 2 & \xi^{4}_{5} & 0 \\
            0 & 0 & 0 & \xi_{5} & 2 & 1 \\
            1 & 0 & 0 & 0 & 1 & 2 \\
          \end{array}
        \right)$
     & $\mathbf{O}_{6\times 6}$
     & $\mathbf{O}_{6\times 6}$ & $\mathbf{O}_{6\times 6}$\\
         \hline
    $\beta_{n,q}$ & 0 & 6 & 6 &6\\
      \hline
    $\mathbf{Spec}(L_{n,q})$ &\{0.04,0.66,1.38,2.62,3.34,3.96\}& \{0,0,0,0,0,0\} & \{0,0,0,0,0,0\} &\{0,0,0,0,0,0\}\\
    \hline
  \end{tabular}
  \end{small}
  \end{table}
\noindent Our calculations demonstrate that the eigenvalues are consistently non-negative definite. Moreover, the number of zero eigenvalues of Laplacians coincides with the corresponding Mayer Betti numbers.
\end{example}

In an intuitive sense, the Mayer homology and Mayer Laplacian of a complex reflect connections between simplices at different dimensions.
The corresponding Betti numbers reveal the topological cycles representing interactions between simplices of different dimensions, whereas the eigenvalues of the Laplacian operator deconstruct the connectivity between simplices of various dimensions. These relationships are more intricate and subtle, extending beyond what traditional simplicial homology theory can  capture.

\section{Persistence on Mayer invariants}\label{section:persistence_Mayer}

In this section, we will explore the persistent versions of Mayer homology and Mayer Laplacians. Since Mayer homology and Mayer Laplacians provide information different from the usual simplicial homology and Laplacian, investigating Mayer invariants is highly meaningful for our study of the topological characteristics and geometric structure of data. From now on, the ground field is taken to be the complex number field $\mathbb{C}$. Besides, we always consider the case that $N$ is a prime number for the sake of simplicity.

\subsection{Persistent Mayer homology}

Let $K$ be a simplicial complex, and let $f:K\to \mathbb{R}$ be a real-valued function defined on $K$ such that $f(\sigma)\leq f(\tau)$ for every face $\sigma$ of $\tau$ in $K$. For each real number $a$, we can obtain a sub complex $K_{a}=\{\sigma\in K|f(\sigma)\leq a\}$ of $K$. Moreover, for real numbers $a\leq b$, one has $K_{a}\subseteq K_{b}$. Thus, we can obtain a filtration of simplicial complexes
\begin{equation}
  K_{a_{1}}\subseteq K_{a_{2}}\subseteq \cdots \subseteq K_{a_{m}}
\end{equation}
for real numbers $a_{1}<a_{1}<\cdots< a_{m}$. By Proposition \ref{proposition:functor2}, we have a sequence of $N$-chain complexes
\begin{equation}
  C_{\ast}(K_{a_{1}};\mathbb{C})\to C_{\ast}(K_{a_{2}};\mathbb{C})\to \cdots \to C_{\ast}(K_{a_{m}};\mathbb{C}).
\end{equation}
By Proposition \ref{proposition:functor1}, this induces a sequence of Mayer homology
\begin{equation}
   H_{\ast,q}(K_{a_{1}};\mathbb{C})\to H_{\ast,q}(K_{a_{2}};\mathbb{C})\to \cdots \to H_{\ast,q}(K_{a_{m}};\mathbb{C})
\end{equation}
for any $1\leq q\leq N-1$. For any real numbers $a\leq b$ and $1\leq q\leq N-1$, the \emph{$(a,b)$-persistent Mayer homology} is defined by
\begin{equation}
  H_{n,q}^{a,b}:=\im (H_{n,q}(K_{a};\mathbb{C})\to H_{n,q}(K_{b};\mathbb{C})),\quad n\geq 0.
\end{equation}
The rank of $H_{n,q}^{a,b}$ is the $(a,b)$-persistent Betti numbers. The persistent Betti numbers can also be visualized using a persistence diagram or barcode. It is worth noting that for each $1\leq q\leq N-1$, we can obtain a persistence diagram, which means that the persistent Mayer homology contains more information than the usual persistent homology. Moreover, the fundamental theorems of persistent homology are also applicable to persistent Mayer homology.

Let $\{K_{a_{i}}\}_{i\geq 1}$ be a filtration of simplicial complexes. For each $i\geq 1$, we have the map $x:H_{\ast,q}(K_{a_{i}};\mathbb{C})\to H_{\ast,q}(K_{a_{i+1}};\mathbb{C})$ induced by $i\to i+1$. Consider the persistent homology, denoted as $\mathbf{H}_{q}=\bigoplus\limits_{i=1}^{\infty}H_{\ast,q}(K_{a_{i}};\mathbb{C})$, which encapsulates homological information from all time steps. Then one has a map $x:\mathbf{H}_{q}\to \mathbf{H}_{q}$, where $x$ map a generator at $a_{i}$ to a generator at $a_{i+1}$. Let $\mathbb{C}[x]$ be a polynomial ring over the complex number field $\mathbb{C}$. The space $\mathbf{H}_{q}$ is a left $\mathbb{C}[x]$-module given by
\begin{equation}
  \mathbb{C}[x]\times \mathbf{H}_{q}\to \mathbf{H}_{q},\quad (f(x),\alpha)\mapsto f(x)(\alpha).
\end{equation}
Moreover, the module structure theorem for persistent Mayer homology is established as follows.
\begin{theorem}
For a filtration of finite simplicial complexes $\{K_{a_{i}}\}_{i\geq 1}$, the corresponding persistent Mayer homology $\mathbf{H}_{q}$ has a decomposition as $\mathbb{C}[x]$-module
\begin{equation}
  \mathbf{H}_{q}\cong \left(\bigoplus\limits_{t} \mathbb{C}[x] \cdot \alpha_{b_{t}} \right)\oplus \left(\bigoplus\limits_{s}  \mathbb{C}[x]/x^{c_{s}}\cdot \beta_{b_{s}}\right).
\end{equation}
\end{theorem}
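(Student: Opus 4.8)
The plan is to recognize that, once we pass to Mayer homology, the $N$-chain structure disappears and we are left with an ordinary persistence module of finite-dimensional $\mathbb{C}$-vector spaces; the statement then reduces to the classical Zomorodian--Carlsson structure theorem. Concretely, by Corollary \ref{corollary:functor} each inclusion $K_{a_i}\hookrightarrow K_{a_{i+1}}$ induces a $\mathbb{C}$-linear map $H_{\ast,q}(K_{a_i};\mathbb{C})\to H_{\ast,q}(K_{a_{i+1}};\mathbb{C})$, so the sequence of Mayer homologies is genuinely a sequence of vector spaces and linear maps. Thus the only feature of Mayer homology I would use is its functoriality; nothing about the $N$-differential enters the argument beyond that, and the proof is uniform in $q$.

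First I would make precise the graded $\mathbb{C}[x]$-module structure already described above: $\mathbf{H}_q=\bigoplus_{i\geq 1}H_{\ast,q}(K_{a_i};\mathbb{C})$ is $\mathbb{N}$-graded by the filtration index, and $x$ acts as the degree-one shift sending a class in level $i$ to its image in level $i+1$. Because the coefficients lie in the field $\mathbb{C}$, the ring $\mathbb{C}[x]$ is a graded principal ideal domain. The essential preliminary is to show $\mathbf{H}_q$ is a finitely generated $\mathbb{C}[x]$-module: since each $K_{a_i}$ is a finite simplicial complex, every $C_n(K_{a_i};\mathbb{C})$ is finite-dimensional, hence so is each $H_{n,q}(K_{a_i};\mathbb{C})$, and since the filtration is of finite type (the complexes stabilize once all simplices have entered), only finitely many homogeneous generators are needed.

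Next I would invoke the structure theorem for finitely generated graded modules over a graded PID, which yields a decomposition
\[
\mathbf{H}_q\cong\Bigl(\bigoplus_t \Sigma^{b_t}\mathbb{C}[x]\Bigr)\oplus\Bigl(\bigoplus_s \Sigma^{b_s}\mathbb{C}[x]/(p_s)\Bigr),
\]
with each $p_s$ a homogeneous element and $\Sigma$ the grade shift recording the birth index. Here the key structural check is that the only homogeneous irreducible element of $\mathbb{C}[x]$, up to a unit, is $x$ itself (a polynomial $x-c$ with $c\neq 0$ is inhomogeneous), so every homogeneous primary ideal has the form $(x^{c_s})$; this is exactly what forces the torsion summands to be $\mathbb{C}[x]/x^{c_s}$ rather than quotients by arbitrary polynomials, and it is the place where working over a field in a single persistence parameter is used. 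Reindexing the shifts as the generators $\alpha_{b_t}$ and $\beta_{b_s}$ then produces the stated form.

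The main obstacle, such as it is, is not conceptual but bookkeeping: one must verify finite generation cleanly (the finiteness and eventual stabilization of the filtration) and track the grading so that the free summands $\mathbb{C}[x]\cdot\alpha_{b_t}$ are correctly identified with classes of infinite persistence born at $b_t$, while the torsion summands $\mathbb{C}[x]/x^{c_s}\cdot\beta_{b_s}$ correspond to intervals of finite length $c_s$ beginning at $b_s$. I would also note that, unlike ordinary persistent homology, here one obtains a separate decomposition of this kind for each $1\leq q\leq N-1$, reflecting the richer family of persistence diagrams carried by persistent Mayer homology.
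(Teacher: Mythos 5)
Your proposal is correct and follows essentially the same route as the paper, which explicitly states that its proof is ``a replica of the standard persistent homology structure theorem'': functoriality of Mayer homology turns the filtration into an ordinary persistence module of finite-dimensional $\mathbb{C}$-vector spaces, and the Zomorodian--Carlsson argument via the structure theorem for finitely generated graded modules over the graded PID $\mathbb{C}[x]$ gives the decomposition. Your write-up in fact supplies the details (finite generation, and the observation that $x$ is the only homogeneous irreducible, forcing torsion summands of the form $\mathbb{C}[x]/x^{c_{s}}$) that the paper leaves implicit.
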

The proof of the above theorem is essentially a replica of the standard persistent homology structure theorem. Similarly, the generators in the free part, denoted as $\alpha_{b_{t}}$, refer to those generators born at time $b_{t}$ and persist until infinity, while $\beta_{b_{s}}$ represents the generators born at time $b_{s}$ and dead at time $b_{s}+c_{s}$. Similarly, we can define the barcode for persistent Mayer homology and give the fundamental characterization theorem for barcodes.

\subsection{Wasserstein distance for Mayer persistence diagrams}

Recall that the $r$-th Wasserstein distance of persistence diagrams is defined by
\begin{equation}
  W_{r}(\mathcal{D},\mathcal{D}')=\inf\limits_{\gamma:\mathcal{D}\to \mathcal{D}'}\left(\sum\limits_{x\in \mathcal{D}}\|x-\gamma(x)\|_{s}^{r}\right)^{1/r},
\end{equation}
where $\mathcal{D},\mathcal{D}'$ are persistence diagrams, $\|\cdot\|_{s}$ denotes the $L_{s}$-distance on a persistence diagram, and the infimum is taken over all matchings between $\mathcal{D}$ and $\mathcal{D}'$.

In the context of a filtration of simplicial complexes, a family of persistence diagrams $\mathcal{D}_{1},\dots,\mathcal{D}_{N-1}$ can be obtained for the persistent Mayer homology concerning the $p$-boundary operator. This collection is referred to as the Mayer persistence diagram. To formalize the relationship between these diagrams, we introduce the $r$-th Wasserstein distance for Mayer persistence diagrams, defined by
\begin{equation}
  W_{r}(\{\mathcal{D}_{q}\}_{1\leq q\leq N-1},\{\mathcal{D}_{q}'\}_{1\leq q\leq N-1})=\left(\sum_{q=1}^{N-1}W_{r}(\mathcal{D},\mathcal{D}')^{r}\right)^{1/r}.
\end{equation}
The case where $r=\infty$ is notably well-known. In this scenario, the Wasserstein distance reduces to the bottleneck distance:
\begin{equation}
  d_{B}(\{\mathcal{D}_{q}\}_{1\leq q\leq N-1},\{\mathcal{D}_{q}'\}_{1\leq q\leq N-1})=\sup_{1\leq q\leq N-1}\inf\limits_{\gamma:\mathcal{D}_{q}\to \mathcal{D}_{q}'}\sup\limits_{x\in \mathcal{D}_{q}}|x-\gamma(x)|.
\end{equation}
The real number field $\mathbb{R}$ can be regarded as a poset category with the real numbers as objects and the binary relations $\leq $ as morphisms.
Recall that an $\mathbb{R}$-indexed diagram $\mathcal{F}$ in a category $\mathfrak{C}$ is a functor $\mathcal{F}:\mathbb{R}\to \mathfrak{C}$ from the poset category $\mathbb{R}$ to the category $\mathfrak{C}$. Let $\mathcal{F}^{\mathbb{R}}$ be the category of $\mathbb{R}$-indexed diagrams in $\mathfrak{C}$. Let $\Sigma:\mathcal{F}^{\mathbb{R}}\to \mathcal{F}^{\mathbb{R}}$ be a functor on the category of $\mathbb{R}$-indexed diagrams given by $(\Sigma^{\varepsilon}\mathcal{F})(a) = \mathcal{F}(a+x)$.
\begin{definition}
Let $\mathcal{F}$ and $\mathcal{G}$ be two $\mathbb{R}$-indexed diagrams in a category $\mathfrak{C}$. We say $\mathcal{F}$ and $\mathcal{G}$ are $\varepsilon$-interleaved if there are natural transformations $\Phi: \mathcal{F}\to \Sigma \mathcal{G}$ and $\Psi:\mathcal{G}\to \Sigma \mathcal{F}$ such that $(\Sigma^{\varepsilon}\Psi)\circ \Phi=\Sigma^{2\varepsilon}|_{\mathcal{F}}$ and $(\Sigma^{\varepsilon}\Phi)\circ \Psi=\Sigma^{2\varepsilon}|_{\mathcal{G}}$.
\end{definition}

\begin{definition}
Let $\mathcal{F}$ and $\mathcal{G}$ be two $\mathbb{R}$-indexed diagrams in a category $\mathfrak{C}$. The interleaving distance between $\mathcal{F}$ and $\mathcal{G}$ is defined by
\begin{equation}
  d_{I}(\mathcal{F},\mathcal{G})=\inf\{\varepsilon\geq 0|\text{$\mathcal{F}$ and $\mathcal{G}$ are $\varepsilon$-interleaved}\}.
\end{equation}
\end{definition}

Let $f,g$ be two  real-valued functions defined on a simplicial complex $K$. Then one has two filtrations of simplicial complexes. Let $\|f-g\|_{\infty}=\sup\limits_{\sigma\in K} |f(\sigma)-g(\sigma)|$. Let $\mathcal{D}_{q}(K,f)$ and $\mathcal{D}_{q}(K,g)$ be the persistence diagrams of $K$ filtered by $f$ and $g$, respectively. We have the following result.
\begin{theorem}
$d_{B}(\{\mathcal{D}_{q}(K,f)\}_{1\leq q\leq N-1},\{\mathcal{D}_{q}(K,g)\}_{1\leq q\leq N-1})\leq \|f-g\|_{\infty}$.
\end{theorem}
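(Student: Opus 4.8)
The plan is to reduce the statement to the classical algebraic stability (isometry) theorem for one-parameter persistence modules, applied separately at each stage $q$ (and each homological degree $n$), and then to take the supremum over $q$. The only inputs specific to Mayer theory are: (i) $H_{n,q}(-;\mathbb{C})$ is a functor on simplicial complexes, established in Corollary \ref{corollary:functor}; and (ii) for a finite complex $K$ every module $H_{n,q}(K_{\bullet};\mathbb{C})$ is pointwise finite-dimensional. Everything else is the standard stability machinery, which is completely insensitive to whether the underlying differential satisfies $d^{2}=0$ or $d^{N}=0$.

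First I would set $\delta:=\|f-g\|_{\infty}$ and observe that the two sublevel filtrations $K^{f}_{a}=\{\sigma: f(\sigma)\leq a\}$ and $K^{g}_{a}=\{\sigma: g(\sigma)\leq a\}$ are nested up to a shift of size $\delta$: if $f(\sigma)\leq a$ then $g(\sigma)\leq f(\sigma)+\delta\leq a+\delta$, so $K^{f}_{a}\subseteq K^{g}_{a+\delta}$, and symmetrically $K^{g}_{a}\subseteq K^{f}_{a+\delta}$. These inclusions are morphisms of simplicial complexes, and composing each with the next structure inclusion recovers the filtration maps $K^{f}_{a}\hookrightarrow K^{f}_{a+2\delta}$ and $K^{g}_{a}\hookrightarrow K^{g}_{a+2\delta}$.

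Next, applying the Mayer homology functor from Corollary \ref{corollary:functor} (for a fixed $n$ and $q$) turns these inclusions into natural transformations $\Phi:\mathcal{F}\to\Sigma^{\delta}\mathcal{G}$ and $\Psi:\mathcal{G}\to\Sigma^{\delta}\mathcal{F}$ between the $\mathbb{R}$-indexed modules $\mathcal{F}(a)=H_{n,q}(K^{f}_{a};\mathbb{C})$ and $\mathcal{G}(a)=H_{n,q}(K^{g}_{a};\mathbb{C})$. Functoriality, i.e. the identity $(g\circ f)_{\ast,q}=g_{\ast,q}\circ f_{\ast,q}$ of Proposition \ref{proposition:functor1}, guarantees precisely the two interleaving triangle identities $(\Sigma^{\delta}\Psi)\circ\Phi=\Sigma^{2\delta}|_{\mathcal{F}}$ and $(\Sigma^{\delta}\Phi)\circ\Psi=\Sigma^{2\delta}|_{\mathcal{G}}$, since on each side both maps are induced by the \emph{same} complex inclusion $K^{f}_{a}\hookrightarrow K^{f}_{a+2\delta}$ (respectively $K^{g}_{a}\hookrightarrow K^{g}_{a+2\delta}$). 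Hence $\mathcal{F}$ and $\mathcal{G}$ are $\delta$-interleaved, so $d_{I}(\mathcal{F},\mathcal{G})\leq\delta$ for every $n$ and $q$.

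Finally, I would invoke the algebraic stability theorem: for pointwise finite-dimensional $\mathbb{R}$-indexed modules one has $d_{B}(\mathcal{D}_{q}(K,f),\mathcal{D}_{q}(K,g))\leq d_{I}(\mathcal{F},\mathcal{G})\leq\delta$, where the diagram $\mathcal{D}_{q}$ is the union over degrees $n$ of the per-degree diagrams and the bottleneck distance matches within each degree. Taking the supremum over $1\leq q\leq N-1$ and using the definition of the Mayer bottleneck distance as $\sup_{q}d_{B}(\mathcal{D}_{q},\mathcal{D}_{q}')$ gives the claim. The one step deserving genuine care — and the only place one must not treat the result as a black box — is the interleaving identity above: one must check that the maps induced on Mayer homology really do compose to the structure maps. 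This is exactly where functoriality of $H_{n,q}$ is indispensable, and it is the single point where a reader might fear that the nonstandard differential $d^{N}=0$ causes trouble. It does not, precisely because Corollary \ref{corollary:functor} already packages Mayer homology as an honest functor, so the interleaving assembles formally just as in the $d^{2}=0$ case.
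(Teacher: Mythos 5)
Your proposal is correct and takes essentially the same route as the paper's proof: both use the shifted sublevel-set inclusions $K^{f}_{a}\subseteq K^{g}_{a+\delta}$ and $K^{g}_{a}\subseteq K^{f}_{a+\delta}$, push them through the Mayer homology functor of Corollary \ref{corollary:functor} to obtain a $\delta$-interleaving of the persistence modules for each $n$ and $q$, invoke the algebraic stability/isometry theorem to pass from interleaving distance to bottleneck distance, and finish by taking the supremum over $q$. The only cosmetic difference is that the paper records the interleaving at the level of simplicial-complex diagrams and then cites the $1$-Lipschitz property of functors with respect to interleaving distance, whereas you inline that verification (the triangle identities via functoriality) directly at the homology level.
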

\begin{proof}
We construct the proof based on the concepts developed in \cite{bubenik2014categorification, bauer2020persistence, bauer2013induced}. We consider Mayer persistent homology as the entities in the category $\mathbf{Vec}^{\mathbb{R}}$ of diagrams in the vector spaces category indexed by $\mathbb{R}$. Similarly, we regard Mayer persistence diagrams as the entities in the category $\mathbf{Mch}^{\mathbb{R}}$ of diagrams in the matching category indexed by $\mathbb{R}$. By \cite[Theorem 1.7]{bauer2020persistence} and \cite[Proposition 4.3]{bauer2020persistence}, one has
\begin{equation}
  d_{B}(\mathcal{D}_{q}(K,f),\mathcal{D}_{q}(K,g))=d_{I}(\mathbf{H}_{q}(K,f),\mathbf{H}_{q}(K,g))
\end{equation}
Here, $d_{I}$ denotes the interleaving distance for diagrams indexed by $\mathbb{R}$. For $(K,f)$, we have a diagram $K^{f}:\mathbb{R}\to \mathbf{Simp}$ in the category of simplicial complexes given by $K_{a}^{f}=\{\sigma\in K|f(\sigma)\leq a\}$. Let $\varepsilon=\|f-g\|_{\infty}$. Then there are inclusions of simplicial complexes $K_{a}^{f}\hookrightarrow K_{a+\varepsilon}^{g}$ and $K_{a}^{g}\hookrightarrow K_{a+\varepsilon}^{f}$ for any real number $a$. Thus one has natural transformations $\Phi:K_{\bullet}^{f}\hookrightarrow K_{\bullet+\varepsilon}^{g}$ and $\Psi:K_{\bullet}^{g}\hookrightarrow K_{\bullet+\varepsilon}^{f}$ of $\mathbb{R}$-indexed diagrams. Here, $K_{\bullet}(a)=K_{a}$.
By construction, we have
\begin{equation}
  (\Sigma^{\varepsilon}\Psi)\circ \Phi= \Sigma^{2\varepsilon}|_{K_{\bullet}^{f}}.
\end{equation}
Here, $\Sigma^{\varepsilon}\Psi:K_{\bullet+\varepsilon}^{g}\hookrightarrow K_{\bullet+2\varepsilon}^{f}$ is given by $(\Sigma^{\varepsilon}\Psi)(K_{\bullet+\varepsilon}^{g})(a)=K_{a+2\varepsilon}^{f}$ and $\Sigma^{2\varepsilon}|_{K_{\bullet}^{f}}:K_{\bullet}^{f}\to K_{\bullet+2\varepsilon}^{f}$ is given by $\Sigma^{2\varepsilon}|_{K_{\bullet}^{f}}(K_{\bullet}^{f})(a)=K_{a+2\varepsilon}^{f}$. Similarly, one has $(\Sigma^{\varepsilon}\Phi)\circ \Psi= \Sigma^{2\varepsilon}|_{K_{\bullet}^{g}}$. It follows that $K^{f}$ and $K^{g}$ are $\varepsilon$-interleaved. By definition, we have
\begin{equation}
  d_{I}(K^{f},K^{g})\leq \varepsilon.
\end{equation}
By \cite[Proposition 3.6]{bubenik2014categorification} and Corollary \ref{corollary:functor}, we have
\begin{equation}
  d_{I}(\mathbf{H}_{q}(K,f),\mathbf{H}_{q}(K,g))\leq d_{I}(K^{f},K^{g})\leq\varepsilon.
\end{equation}
It follows that
\begin{equation}
  d_{B}(\mathcal{D}_{q}(K,f),\mathcal{D}_{q}(K,g))\leq d_{I}(K^{f},K^{g})\leq\varepsilon.
\end{equation}
By the definition of bottleneck distance, one has
\begin{equation}
  d_{B}(\{\mathcal{D}_{q}(K,f)\}_{1\leq q\leq N-1},\{\mathcal{D}_{q}(K,g)\}_{1\leq q\leq N-1})\leq \|f-g\|_{\infty}.
\end{equation}
The desired result follows.
\end{proof}

The aforementioned conclusion establishes the stability of persistent Mayer Betti numbers under the bottleneck distance. This guarantees that the persistence of Mayer Betti numbers is a steadfast and resilient topological feature, resistant to noise.

\subsection{Persistent Mayer Laplacians}

Let $\{K_{a_{i}}\}_{i\geq 1}$ be a filtration of simplicial complexes. Endow $C_{\ast}(K_{a_{m}};\mathbb{C})$ with an inner product structure over $\mathbb{C}$. Consequently, as subspaces, each $C_{\ast}(K_{a_{i}};\mathbb{C})$ inherits the inner product structure of $C_{\ast}(K_{a_{m}};\mathbb{C})$.

Consider the inclusion $j_{a,b}:K_{a}\to K_{b}$ of simplicial complexes. By Proposition \ref{proposition:functor2}, we have a morphism $C_{\ast}(j_{a,b}):C_{\ast}(K_{a};\mathbb{C})\to C_{\ast}(K_{b};\mathbb{C})$ of $N$-chain complexes. For the sake of simplicity, we denote $C_{n}^{a}=C_{n}(K_{a};\mathbb{C})$ with the corresponding Mayer differential $d^{a}_{n}$, and denote $j_{n}^{a,b}=C_{n}(j_{a,b})$. Moreover, we denote $d_{n,q}^{a}=d^{a}_{n-q+1}\cdots d^{a}_{n-1}d^{a}_{n}:C_{n}^{a}\to C_{n-q}^{a}$. Let
\begin{equation}
  C_{n,q}^{a,b}=\{x\in C_{n}^{b}|d_{n,q}^{b}x\in C_{n-q}^{a}\},\quad 1\leq q\leq N-1.
\end{equation}
It follows that $C_{n,q}^{a,b}$ is a subspace of $C_{n}^{b}$ with the subspace inner product. Besides, we have a linear map $d_{n,q}^{a,b}:C_{n,q}^{a,b}\to C_{n-q}^{a}$ given by $d_{n,q}^{a,b}(x)=d_{n,q}^{b}x$.
\begin{equation}
    \xymatrix@=1cm{
    C_{n+N-q}^{a}\ar@{->}[rr]^{ d^{a}_{n+N-q,N-q}}\ar@{^{(}->}[dd]_{j_{n+N-q}^{a,b}}&&\quad C_{n}^{a}\quad\ar@<0.75ex>[rr]^-{{ d^{a}_{n,q}} } \ar@{^{(}->}[dd]^{j_{n}^{a,b}}\ar@<0.75ex>[ld]^{{ (d_{n+N-q,N-q}^{a,b})^{\ast}}}&&\quad C_{n-q}^{a}\ar@<0.75ex>[ll]^-{  {(d^{a}_{n,q})^{\ast}}}\ar@{^{(}->}[dd]^{j_{n-q}^{a,b}}\\
                           &C_{n+N-q,N-q}^{a,b}\ar@<0.75ex>[ru]^-{{ d_{n+N-q,N-q}^{a,b}} }\ar@{^{(}->}[ld]&&&                       \\
    C_{n+N-q}^{b}\ar@{->}[rr]^{ d^{b}_{n+N-q,N-q}}&&\quad C_{n}^{b}\quad\ar@{->}[rr]^{d^{b}_{n,q}}&&\quad C_{n-q}^{b}
    }
\end{equation}
The \emph{$(a,b)$-persistent Mayer Laplacian} $\Delta_{n,q}^{a,b}:C_{n}^{a}\to C_{n}^{a}$ is defined by
\begin{equation}
  \Delta_{n,q}^{a,b}:=(d^{a}_{n,q})^{\ast}\circ d^{a}_{n,q}+d_{n+N-q,N-q}^{a,b}\circ (d_{n+N-q,N-q}^{a,b})^{\ast}.
\end{equation}
In particular, if $n<q$, the persistent Mayer Laplacian is reduced to $\Delta_{n,q}^{a,b}=d_{n+N-q,N-q}^{a,b}\circ (d_{n+N-q,N-q}^{a,b})^{\ast}$. We arrange the positive eigenvalues of $\Delta_{n,q}^{a,b}$ in ascending order as follows:
\begin{equation}
  \lambda_{n,q}^{a,b}(1),\lambda_{n,q}^{a,b}(2),\dots,\lambda_{n,q}^{a,b}(r),
\end{equation}
where $r$ is the number of positive eigenvalues. Specifically, $\lambda_{n,q}^{a,b}(1)$ denotes the smallest positive eigenvalue, serving as the spectral gap and bearing close relevance to the Cheeger constant in geometry.

Recall that for simplicial homology, the harmonic component of the persistent Laplacian and persistent homology are isomorphic. Similarly, the harmonic component of the persistent Mayer Laplacian and persistent Mayer homology are also isomorphic. This is presented follows.

\begin{theorem}
For any $a\leq b$, we have an isomorphism $\ker \Delta_{n,q}^{a,b}\cong H_{n,q}^{a,b}$, where $n\geq 0$ and $1\leq q\leq N-1$.
\end{theorem}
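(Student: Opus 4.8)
The plan is to establish a persistent Hodge-type decomposition, mirroring the non-persistent identity $\dim \ker \Delta_{n,q} = \beta_{n,q}$ of Proposition \ref{proposition:equal_betti}. First I would record that $\Delta_{n,q}^{a,b}$ is self-adjoint and non-negative (the persistent analog of Proposition \ref{proposition:laplacian}, since both summands have the form $T^{\ast}T$). For such an operator, $x \in \ker \Delta_{n,q}^{a,b}$ if and only if $\langle \Delta_{n,q}^{a,b} x, x\rangle = 0$, which expands to $\|d_{n,q}^{a} x\|^{2} + \|(d_{n+N-q,N-q}^{a,b})^{\ast} x\|^{2} = 0$. Hence
\begin{equation*}
  \ker \Delta_{n,q}^{a,b} = \ker d_{n,q}^{a} \cap \ker (d_{n+N-q,N-q}^{a,b})^{\ast} = Z_{n,q}^{a} \cap \left(\im d_{n+N-q,N-q}^{a,b}\right)^{\perp},
\end{equation*}
where $Z_{n,q}^{a} = \ker d_{n,q}^{a}$ and I use that the kernel of an adjoint is the orthogonal complement of the image.

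Write $\tilde{B} := \im d_{n+N-q,N-q}^{a,b} \subseteq C_{n}^{a}$. The key structural fact is $\tilde{B} \subseteq Z_{n,q}^{a}$: for $y \in C_{n+N-q,N-q}^{a,b}$ the element $d_{n+N-q,N-q}^{a,b} y = d_{n+N-q,N-q}^{b} y$ lies in $C_{n}^{a}$ by definition, and applying $d_{n,q}^{a}$ (the restriction of $d_{n,q}^{b}$) composes $q + (N-q) = N$ copies of the differential, giving $d^{N}(\cdot) = 0$. Since $\tilde{B}$ is a subspace of $Z_{n,q}^{a}$, the orthogonal complement of $\tilde{B}$ inside $Z_{n,q}^{a}$ is exactly $Z_{n,q}^{a} \cap \tilde{B}^{\perp}$, and the orthogonal projection restricts to an isomorphism
\begin{equation*}
  \ker \Delta_{n,q}^{a,b} = Z_{n,q}^{a} \cap \tilde{B}^{\perp} \xrightarrow{\ \cong\ } Z_{n,q}^{a}/\tilde{B}.
\end{equation*}

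It then remains to identify $Z_{n,q}^{a}/\tilde{B}$ with $H_{n,q}^{a,b}$. I would first observe $\tilde{B} = B_{n,q}^{b} \cap C_{n}^{a}$, directly from the landing condition $d_{n,q}^{b} x \in C_{n-q}^{a}$ defining $C_{n+N-q,N-q}^{a,b}$, and that $Z_{n,q}^{a} \subseteq Z_{n,q}^{b}$ since $d_{n,q}^{a}$ is the restriction of $d_{n,q}^{b}$. By Corollary \ref{corollary:functor} the inclusion $K_{a} \hookrightarrow K_{b}$ induces $H_{n,q}(K_{a};\mathbb{C}) \to H_{n,q}(K_{b};\mathbb{C})$, $[x]\mapsto[x]$, so its image is $H_{n,q}^{a,b} = (Z_{n,q}^{a} + B_{n,q}^{b})/B_{n,q}^{b}$. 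The second isomorphism theorem gives $H_{n,q}^{a,b} \cong Z_{n,q}^{a}/(Z_{n,q}^{a} \cap B_{n,q}^{b})$, and because $\tilde{B} \subseteq Z_{n,q}^{a}$ we have $Z_{n,q}^{a} \cap B_{n,q}^{b} = Z_{n,q}^{a} \cap (B_{n,q}^{b} \cap C_{n}^{a}) = Z_{n,q}^{a} \cap \tilde{B} = \tilde{B}$. Chaining the isomorphisms yields $\ker \Delta_{n,q}^{a,b} \cong Z_{n,q}^{a}/\tilde{B} \cong H_{n,q}^{a,b}$.

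The main obstacle I anticipate is the bookkeeping around the persistent boundary space $\tilde{B}$: one must keep straight which differential ($d^{a}$ versus $d^{b}$) and which domain is in play, and verify that the seemingly partial object $d_{n+N-q,N-q}^{a,b}$ — whose domain $C_{n+N-q,N-q}^{a,b}$ is cut out by a landing condition — produces exactly those boundaries of $C_{n}^{b}$ that happen to lie in $C_{n}^{a}$. The identity $Z_{n,q}^{a} \cap B_{n,q}^{b} = \tilde{B}$ is where the persistent (rather than single-parameter) nature genuinely enters, while the containment $\tilde{B} \subseteq Z_{n,q}^{a}$ resting on $d^{N}=0$ replaces the classical $d^{2}=0$ step; both deserve careful statement.
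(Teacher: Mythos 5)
Your proposal is correct and takes essentially the same route as the paper: the paper's entire proof consists of noting the identity $d^{a}_{n,q}\circ d_{n+N-q,N-q}^{a,b}=0$ (your key structural fact $\im d_{n+N-q,N-q}^{a,b}\subseteq Z_{n,q}^{a}$) and then citing \cite[Proposition 3.1]{liu2023algebraic} for the Hodge-type isomorphism. What you have written out in full --- the self-adjointness argument, $\ker \Delta_{n,q}^{a,b}=Z_{n,q}^{a}\cap\bigl(\im d_{n+N-q,N-q}^{a,b}\bigr)^{\perp}$, and the identification of $Z_{n,q}^{a}/\bigl(Z_{n,q}^{a}\cap B_{n,q}^{b}\bigr)$ with $H_{n,q}^{a,b}$ via the second isomorphism theorem --- is precisely the content of that cited proposition, so your argument is a correct, self-contained expansion of the paper's proof rather than a different one.
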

\begin{proof}
Note that $d^{a}_{n,q}\circ d_{n+N-q,N-q}^{a,b}=0$. The result follows from \cite[Proposition 3.1]{liu2023algebraic}.
\end{proof}

The above theorem indicates that, within the Mayer homology theory, the persistent Mayer Laplacian contains more information than persistent Mayer homology. The persistent Mayer Laplacian reflects the geometric characteristics of complexes. It can be easily proven that the eigenvalues of the persistent Mayer Laplacian are non-negative. We arrange the positive eigenvalues in ascending order, denoting them as $\lambda_{n,q}(1),\dots,\lambda_{n,q}(r)$. Here, $r$ is the number of positive eigenvalues. Typically, attention is often focused on the smallest positive eigenvalue, the largest positive eigenvalue, the average value of eigenvalues, and similar information. In this paper, our examples and applications will involve computing the smallest eigenvalue.

\subsection{Mayer invariants on Vietoris-Rips complexes}

Let $X$ be a finite set of points embedded in Euclidean space. It is always possible to construct a filtration of simplicial complexes. Common constructions include Vietoris-Rips complexes, alpha complexes, cubical complexes, and others. These complexes offer diverse topological descriptions for datasets. Now, we will focus on exploring the Mayer invariants on Vietoris-Rips complexes.

Given a real number $\epsilon$, the Vietoris-Rips complex on $X$ is given by the simplicial complex
\begin{equation}
  \mathcal{VR}_{\epsilon}=\{\sigma\subseteq X| \text{every pair of points in $\sigma$ has a distance not larger than $\epsilon$}\}.
\end{equation}
From the Vietoris-Rips complex, one can derive the $N$-chain complex $C_{\ast}(\mathcal{VR}_{\epsilon};\mathbb{C})$. Furthermore, for any real numbers $\epsilon\leq \epsilon'$, the inclusion $\mathcal{VR}_{\epsilon}\hookrightarrow \mathcal{VR}_{\epsilon'}$ induces the inclusion $C_{\ast}(\mathcal{VR}_{\epsilon};\mathbb{C})\hookrightarrow C_{\ast}(\mathcal{VR}_{\epsilon'};\mathbb{C})$ of $N$-chain complexes. It leads the persistent Mayer homology
\begin{equation}
  H_{n,q}^{\epsilon,\epsilon'}=\im (H_{n,q}(\mathcal{VR}_{\epsilon};\mathbb{C})\to H_{n,q}(\mathcal{VR}_{\epsilon'};\mathbb{C})),\quad n\geq 0.
\end{equation}
and the persistent Mayer Laplacian based on the Vietoris-Rips complexes, serving as the primary tool in our work.

\paragraph{Example 1.}
Consider the example where $X_1$ consists of the following seven points on a plane
\begin{equation}
  (0,0),(1,1),(1,-1),(2, 1),(2.5,1.5),(2.5, 0.5),(3,1).
\end{equation}
\begin{figure}[H]
	\centering
	\includegraphics[width=0.8\textwidth]{./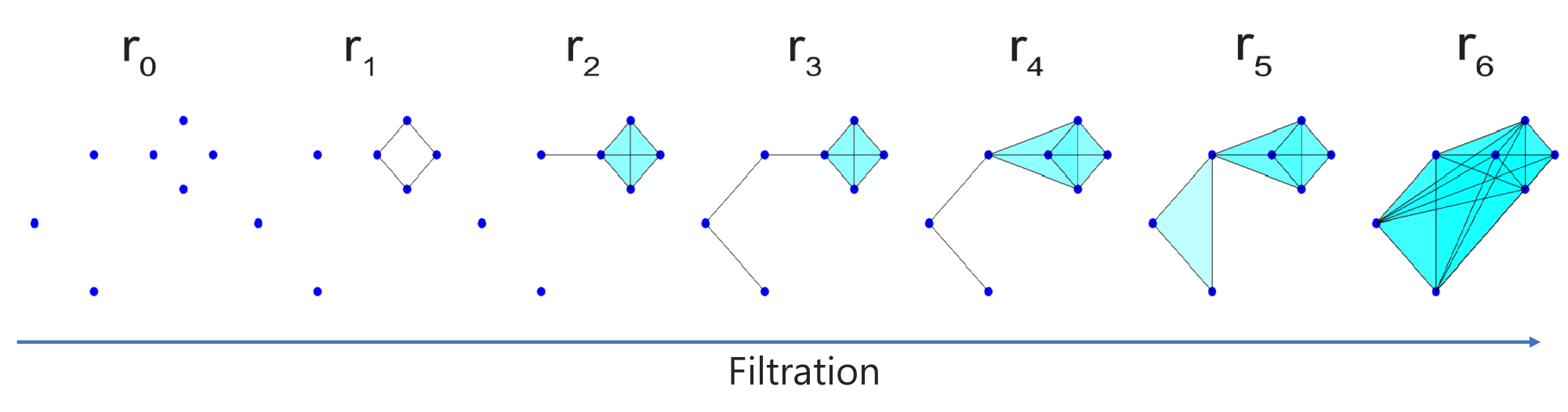}\\
	\caption{Illustration of the Vietoris-Rips complexes at different filtration radius for pointset $X_1$. Note that for the point set $X_1$ in this example, we can obtain a maximum of 12 Vietoris-Rips complexes with different filtration radius. For simplicity, we have omitted 5 complexes between $r_5$ and $r_6$. }
	\label{figure:persistent_trisqaure}
\end{figure}	

 Here, we exhibits a visualization of some of the corresponding Vietoris-Rips complexes in \autoref{figure:persistent_trisqaure}, labeled by their filtration radius, namely $r_0$ to $r_6$, respectively. In this example, the topological features we employed from the Mayer invariants include the Betti numbers at dimension $0$ and $1$. We display comparisons of calculation results of the persistent Mayer homology of the Vietoris-Rips complexes derived from the set $X$ with different $N$ values.

We first compare the case $N=2$ with $N=3$, shown in Figure \ref{figure:Mayer_Betti_2_3}. The $N=2$ case, which also represents the classical persistent Betti numbers, exhibit fewer topological features than the persistent Mayer Betti numbers for $N=3$ case. Specifically, the classical ($N=2$) persistent homology can yield non-trivial Betti numbers for dimensional $0$ and $1$ at filtration radius $r_0$,$r_1$,$r_2$, and $r_1$, respectively. In contrast, for $N=3$ case, the persistent Mayer homology reveals non-trivial Mayer Betti number 0 at $r_0$ ($q=1$ and $q=2$), $r_1$ ($q=1$ and $q=2$), $r_2$ ($q=1$ and $q=2$),  $r_3$ ($q=1$), $r_4$ ($q=1$), $r_5$ ($q=1$), and $r_6$ ($q=1$). Additionally, the $N=3$ case yields non-trivial Mayer Betti number 1 at $r_1$ ($q=1$ and $q=2$), $r_2$ ($q=1$ and $q=2$), $r_3$ ($q=1$ and $q=2$), $r_4$ ($q=1$ and $q=2$), $r_5$ ($q=1$ and $q=2$), and $r_6$ ($q=1$).

\begin{figure}[H]
	\centering
	\includegraphics[width=0.7\textwidth]{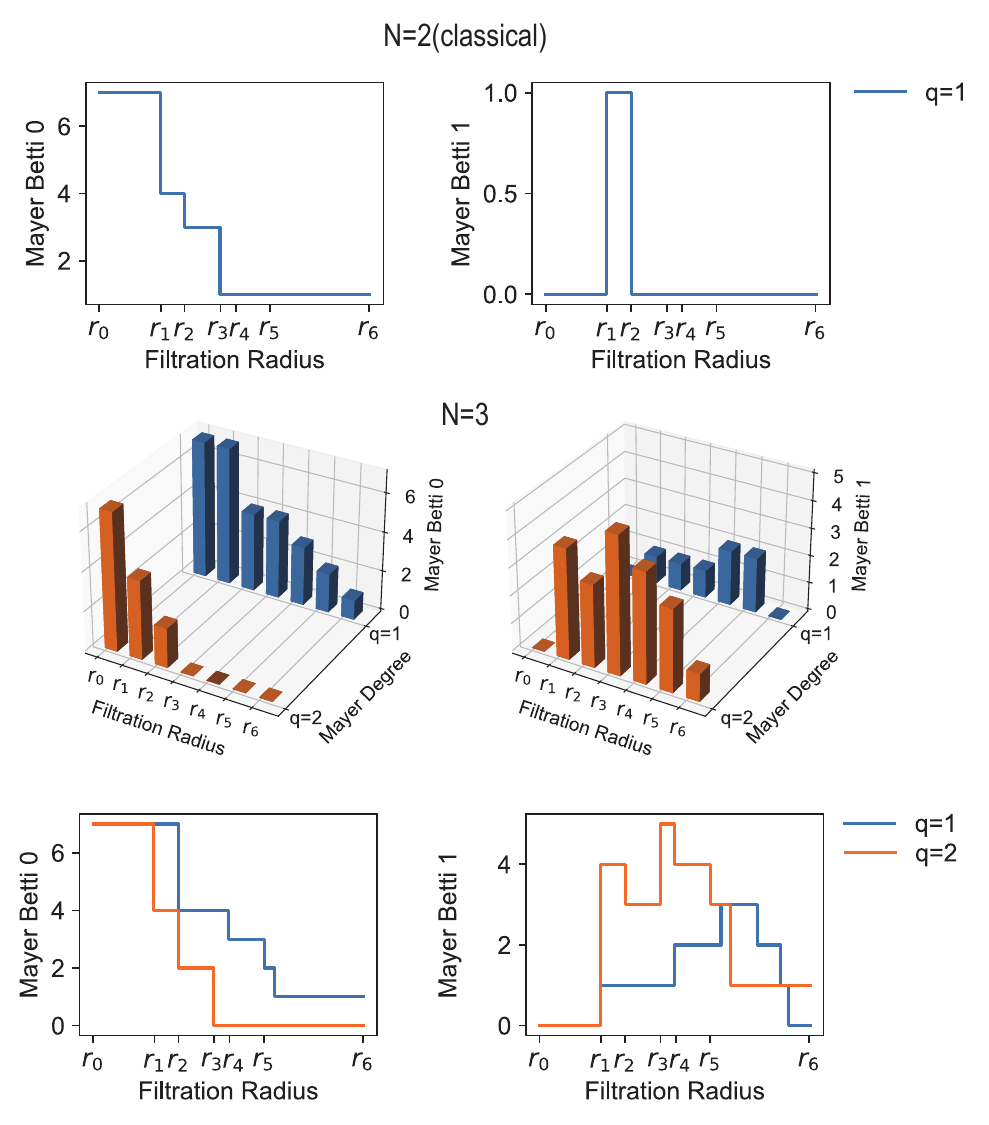}\\
	\caption{Comparison of persistent Betti numbers between the cases $N=2$, $N=3$.}\label{figure:Mayer_Betti_2_3}
\end{figure}

Moreover, persistent Mayer homology offers a more precise representation of the filtration variations across Mayer Betti numbers. Considering Mayer Betti number 0 for $N=3$, that of the Mayer degree $q=1$ demonstrates changes from $r_0$ to $r_1$, from $r_1$ to $r_2$, and from $r_2$ to $r_3$. For the Mayer degree $q=2$, it reveals changes from $r_1$ to $r_2$, from $r_3$ to $r_4$, from $r_4$ to $r_5$, and from $r_5$ to $r_6$. In the one-dimensional case of Mayer Betti numbers, the changes from $r_0$ to $r_1$, $r_3$ to $r_4$, $r_4$ to $r_5$, and $r_5$ to $r_6$ can be captured by either $q=1$ or $q=2$. Furthermore, additional variations, such as those from $r_1$ to $r_2$ and $r_2$ to $r_3$, are observable in the $q=2$ scenario. In contrast to traditional methods that can only capture changes from $r_0$ to $r_1$ and from $r_1$ to $r_2$, persistent Mayer homology demonstrates a more pronounced advantage by encompassing a broader range of changes, providing a richer and more detailed representation of the underlying geometric variations in the filtration.

\begin{figure}[H]
	\centering
	\includegraphics[width=0.7\textwidth]{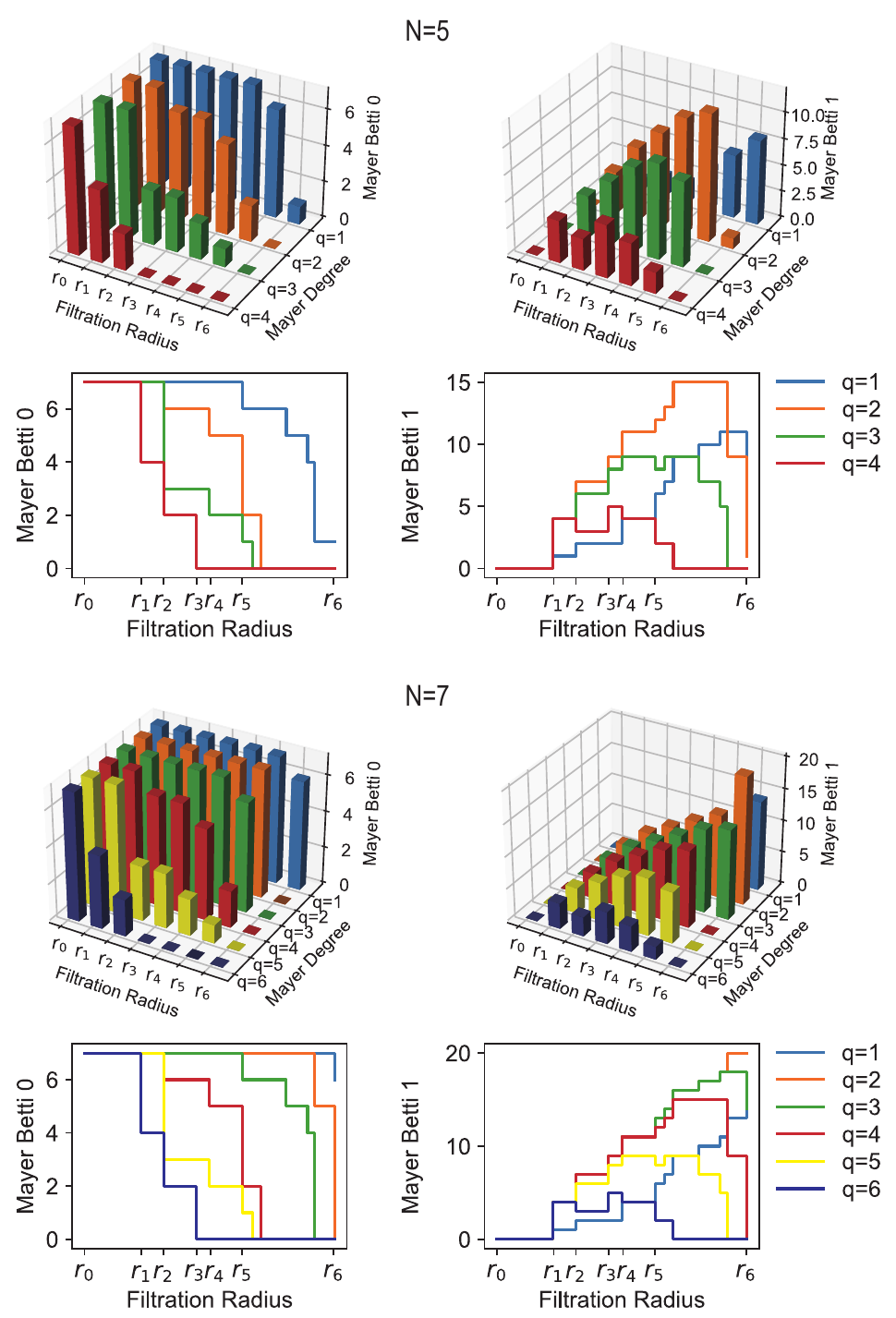}\\
	\caption{Illustration of persistent Betti numbers between the cases $N=5$, $N=7$. The Mayer degree, denoted by $q$, refers to the stage of Mayer homology.}\label{figure:Mayer_Betti_5_7}
\end{figure}

While in other cases, such as $N=5$, and $N=7$, more topological features are encompassed. As illustrated in \autoref{figure:Mayer_Betti_5_7}, we consistently observe $N-1$ Betti curves, each reflecting distinct topological information. To provide a more accurate description of the information content in the Betti curves obtained for different values of $N$, we conducted a statistical analysis of the variations in Betti 0 and Betti 1 for different values of $N$, shown in \autoref{table:betti_viaration}. We observe that with the increase in the value of $N$, the quantities of Betti 0 variations and Betti 1 variations strictly and positively increase. The increasing effect is more pronounced for Betti 1, indicating that, unlike the information obtained from the classical persistent homology of Rips complexes, the one-dimensional information provided by persistent Mayer homology also plays a crucial role.

Additionally, it is noteworthy that the average Betti variation in \autoref{table:betti_viaration} indicates that, for the majority of cases, increasing the value of $N$ not only results in obtaining more Betti curves but also enhances the topological information of each Betti curve. The only exception is the case of Betti 0 for $N=7$. This is primarily due to the fact that the point set considered in this example contains only 7 points, leading to a sparse existence of high-dimensional simplices in the corresponding Vietoris-Rips complex. In Mayer homology, Betti 0 variation implies that 0-dimensional simplices are killed by some higher-dimensional simplices. If the number of higher-dimensional simplices is too sparse, the difficulty of eliminating 0-dimensional simplices increases, leading to a reduction in the quantity of variations. However, in application scenarios, the number of points in the point set is generally much larger than the value of $N$. In such cases, we can typically expect an increase in the average Betti variations.

\begin{table}[H]
	\centering
	\begin{tabular}{c|c|c|c|c}
		\hline
		N value & Betti 0 variations &Avg. Betti 0 variations& Betti 1 variations&Avg. Betti 1 variations\\
		\hline
		2 & 3 &3 & 2& 2\\
		3 & 7 &3.5 & 12 & 6\\
		5 & 15 &3.75 & 33&8.25\\
		7 & 17 &2.83& 54& 9\\
		\hline
	\end{tabular}
	\caption{A statistics of the Mayer Betti curves variation for different $N$ value.}\label{table:betti_viaration}
\end{table}


\paragraph{Example 2.}

In the previous example, we have confirmed that Mayer Betti numbers can capture changes in the majority of Vietoris-Rips complexes. Therefore, a question worth discussing is whether Persistent Mayer Laplacian can provide more information compared to Persistent Mayer homology.

In this example, we show the comparison of Betti numbers and the smallest eigenvalues for the non-harmonic components of the Laplacians for the case $N=5$. Here, we consider example where points are distributed on the vertices of a three-dimensional cube. Let $X_2$ be a set with points given by
\begin{equation}
  (0, 0, 1.3),(0, 0, -1),(0, 1, 0),(0, -1, 0),(1, 0, 0),(-1, 0, 0).
\end{equation}
\autoref{figure:persistent_octahedron} shows the visualization of the Vietoris-Rips complexes.
\begin{figure}[H]
	\centering
	\includegraphics[width=0.6\textwidth]{./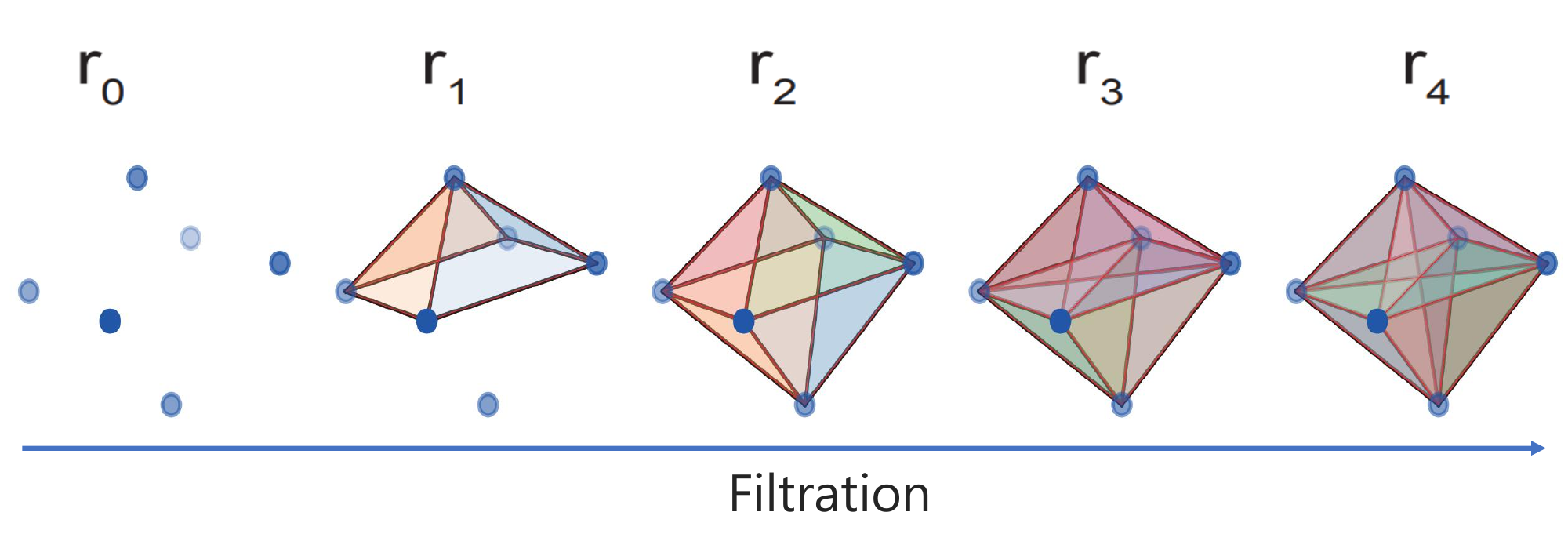}\\
	\caption{Illustration of the Vietoris-Rips complexes at different filtration radius for pointset $X_2$.}
	\label{figure:persistent_octahedron}
\end{figure}	

We are interested to know whether persistent Mayer Laplacian detects more geometric variations than  persistent Mayer Homology in characterizing data.
To this end, we compare the persistent Betti numbers and the smallest non-zero eigenvalues of persistent Mayer Laplacians derived from $X_2$ for the case $N=2$, $N=3$, and $N=5$ as shown in \autoref{figure:mayer_laplacian_2}, \autoref{figure:mayer_laplacian_3}, and \autoref{figure:mayer_laplacian_5}, respectively.  Since the harmonic spectra of persistent Mayer Laplacians fully recovery the topological information of persistent Mayer Homology, attention is given to whether
 Mayer Laplacian's non-zero eigenvalue can detect  additional variations
compared to   Mayer Betti numbers. Our results are summarized in  \autoref{table:laplacian_variation}. After comparison, we observe that the classical ($N=2$) Laplacian's nonharmonic spectra can detect more variations in both dimension 0 and 1. While Mayer Laplacian's first nonzero eigenvalue  is superior in dimension 0 for all $N=3$ cases, and $N=5, q=2$, $N=5, q=3$, $N=5, q=4$ cases, and in dimension 1 for $N=3, q=2$, $N=5, q=1$, and $N=5, q=4$ cases. It performs on par with Mayer Betti number in dimension 0 for $N=5, q=1$, in dimension 1 for $N=3, q=1$. In addition, Mayer Laplacian's first nonzero eigenvalue  captures fewer variations than  Mayer Betti number does in dimension 1 for $N=5, q=2$ and $N=5, q=3$. In summary, Mayer Laplacian exhibits superior performance compared to Mayer Betti numbers, confirming that persistent Mayer Laplacian indeed provides richer information compared to persistent Mayer Homology.

\begin{figure}[H]
	\centering
	\includegraphics[width=0.9\textwidth]{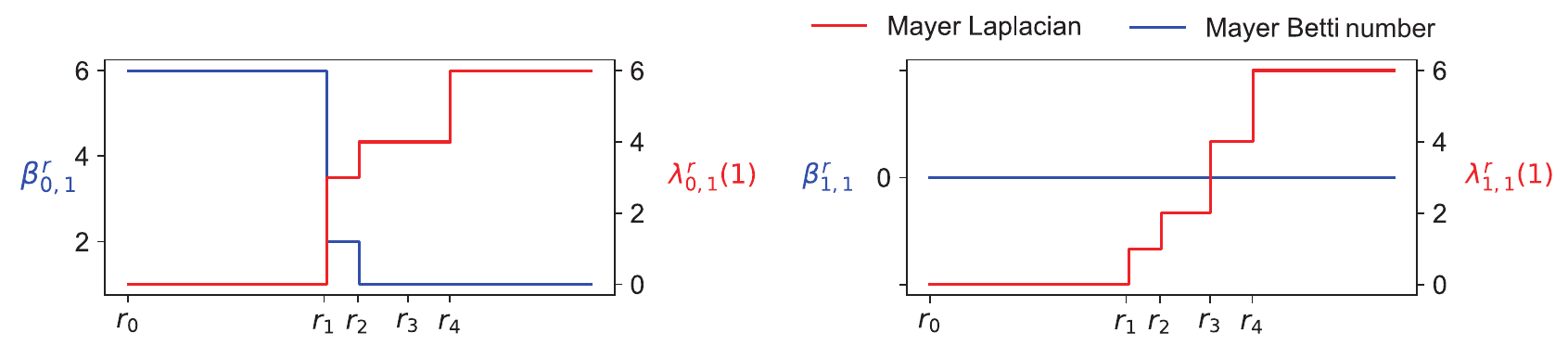}\\
	\caption{Comparison of persistent Betti numbers and the smallest positive eigenvalues of persistent Laplacians for the case that $N=2$ (classical). The blue curves denote the Betti curves, while the red curves represent changes of the smallest eigenvalues. The notion $\beta^{r}_{n,q}$ denotes the $n$-dimensional Betti number at stage $q$ of the Vietoris-Rips complex at distance $r$. The notion $\lambda_{n,q}^{r}(1)$ represents the smallest eigenvalue of the non-harmonic component of the Laplacian $\Delta_{n,q}^{r}$ at distance parameter $r$.}\label{figure:mayer_laplacian_2}
\end{figure}

\begin{figure}[H]
	\centering
	\includegraphics[width=0.9\textwidth]{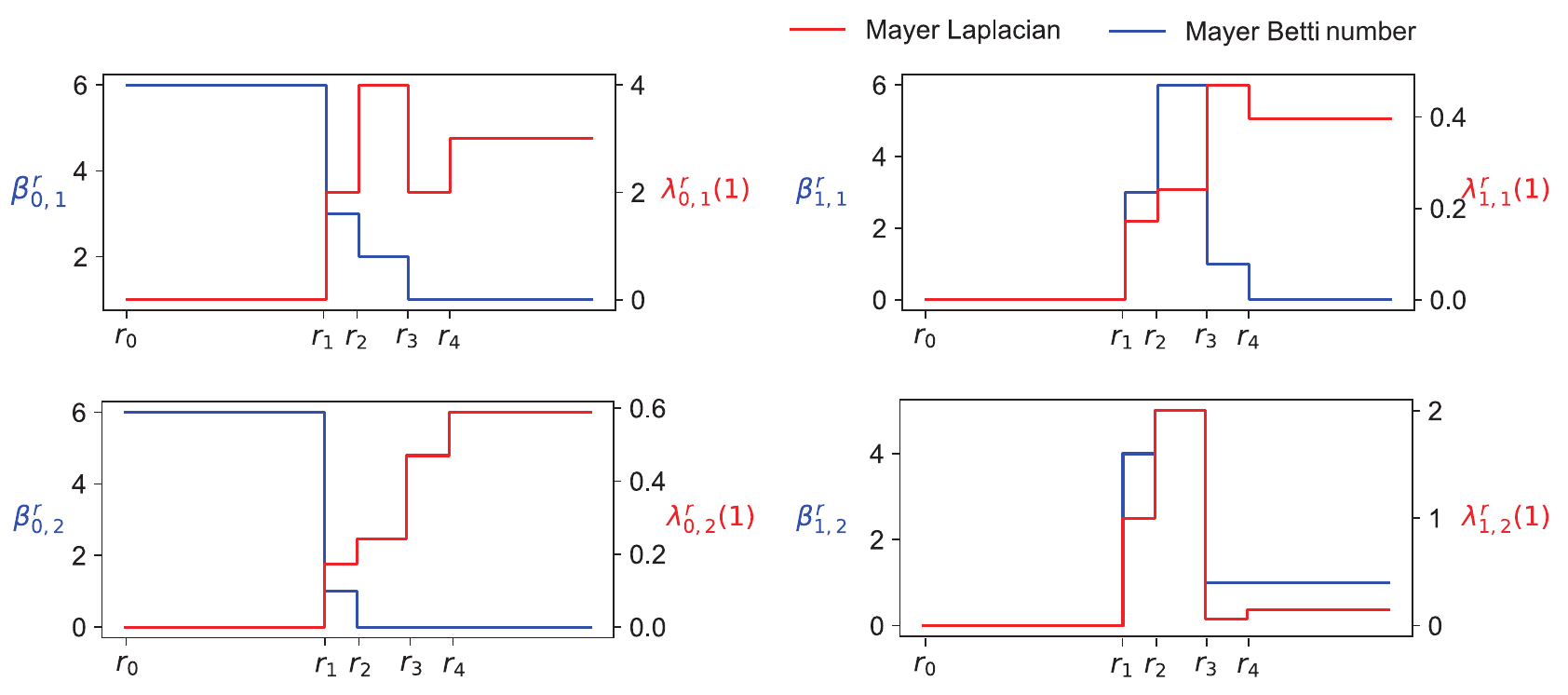}\\
	\caption{Comparison of persistent Betti numbers and the smallest positive eigenvalues of persistent Laplacians for the case that $N=3$. The blue curves denote the Betti curves, while the red curves represent changes of the smallest eigenvalues. The notion $\beta^{r}_{n,q}$ denotes the $n$-dimensional Betti number at stage $q$ of the Vietoris-Rips complex at distance $r$. The notion $\lambda_{n,q}^{r}(1)$ represents the smallest eigenvalue of the non-harmonic component of the Laplacian $\Delta_{n,q}^{r}$ at filtration  parameter $r$.}\label{figure:mayer_laplacian_3}
\end{figure}

\begin{figure}[H]
	\centering
	\includegraphics[width=0.9\textwidth]{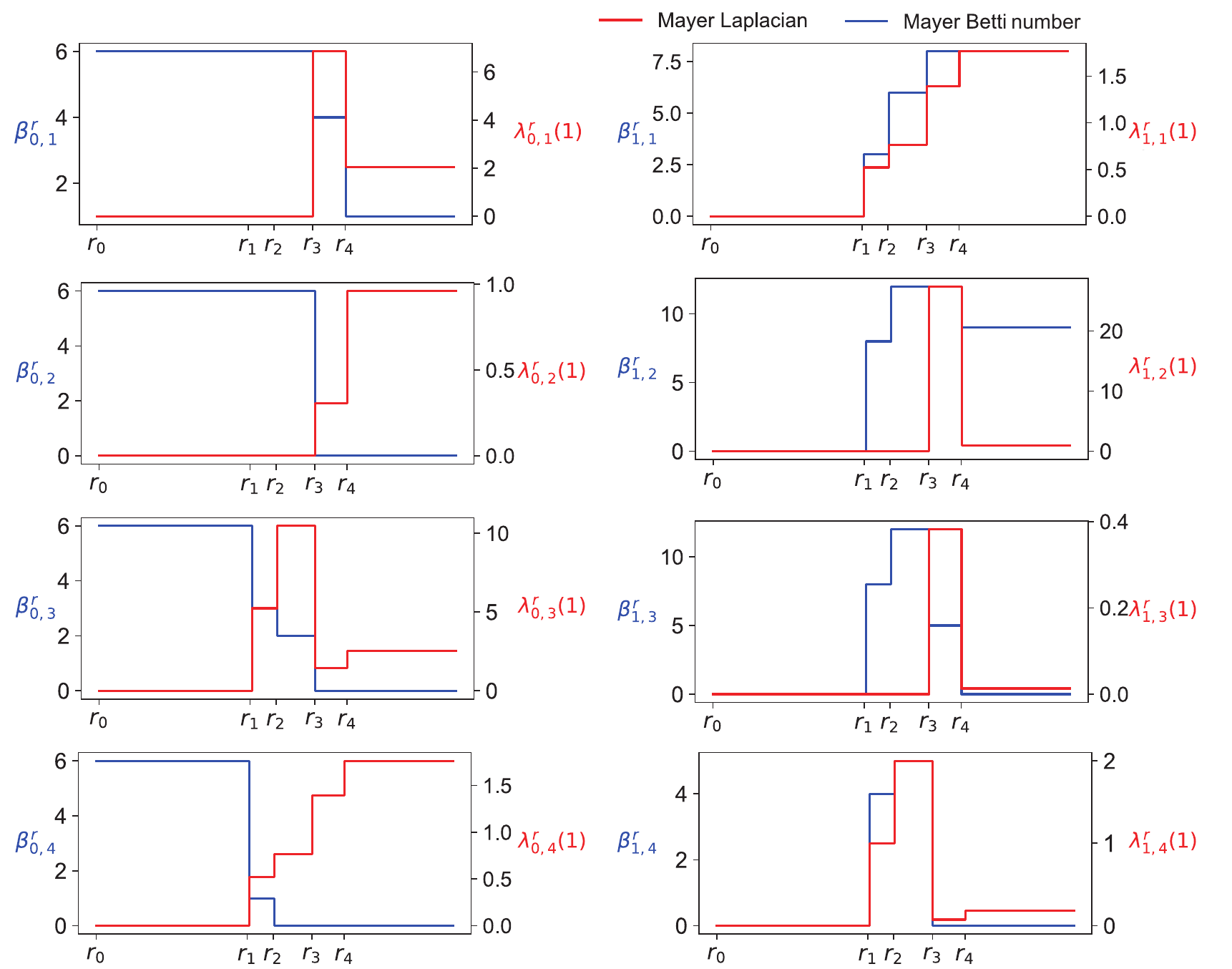}\\
	\caption{Comparison of persistent Betti numbers and the smallest positive eigenvalues of persistent Laplacians for the case that $N=5$. The blue curves denote the Betti curves, while the red curves represent changes of the smallest eigenvalues. The notion $\beta^{r}_{n,q}$ denotes the $n$-dimensional Betti number at stage $q$ of the Vietoris-Rips complex at distance $r$. The notion $\lambda_{n,q}^{r}(1)$ represents the smallest eigenvalue of the non-harmonic component of the Laplacian $\Delta_{n,q}^{r}$ at filtration  parameter $r$.}\label{figure:mayer_laplacian_5}
\end{figure}

A more detailed analysis reveals that the reason for the use of  Mayer Laplacian lies in its inability to detect the variations from $r_0$ to $r_1$ and from $r_1$ to $r_2$ in the 1-dimensional case for $N=5, q=2$ and $N=5, q=3$. In both of these scenarios, the smallest eigenvalues of persistent Laplacians are consistently 0. This indicates that, in these cases, all 1-dimensional simplices precisely serve as representatives of some Mayer homology classes. Therefore, we believe that while persistent Mayer Laplacian's first eigenvalue can offer more information compared to persistent Mayer homology, it is not sufficient to replace the latter. The combination of both harmonic and non-harmonic spectra  is necessary to achieve better results in practical applications.

\begin{table}[H]
	\centering
	\begin{tabular}{c|c c|c c}
		\hline
Mayer invariants & $\beta_{0,q}$ & $\lambda_{0,q}(1)$ & $\beta_{1,q}$ & $\lambda_{1,q}(1)$\\
		\hline \hline
$N=2$, $q=1$& 2            & 3             & 0                 & 4                 \\
\hline
$N=3$, $q=1$& 3            & 4             & 4                 & 4                 \\
$N=3$, $q=2$& 2            & 4             & 3                 & 4                 \\
\hline
$N=5$, $q=1$& 2            & 2             & 3                 & 4                 \\
$N=5$, $q=2$& 1            & 2             & 3                 & 2                 \\
$N=5$, $q=3$& 3            & 4             & 4                 & 2                 \\
$N=5$, $q=4$& 2            & 4             & 3                 & 4                 \\
\hline
	\end{tabular}
	\caption{A comparison of variation  detection of the Mayer Betti numbers with the Mayer Laplacian's first non-zero eigenvalues for $N=2, 3$, and $5$.}\label{table:laplacian_variation}
\end{table}


\section{Applications}\label{section:applications}

In this section, we will compute the persistent Mayer Betti numbers and spectral gaps of Mayer Laplacians for fullerene $\mathrm{C}_{60}$ and cucurbit[7]uril $\mathrm{CB}7$. We use the atomic coordinates of molecules as spatial points to construct the Vietoris-Rips complex, and then build an $N$-chain complex on it. Typically, we consider the cases $N=2$, $N=3$, and $N=5$. Here, $N$ represents the integer that $d^{N}=0$. We focus on the Mayer Betti numbers denoted as $\beta_{n,q}$ and the smallest positive eigenvalues of Mayer Laplacians (spectral gaps) denoted as $\lambda_{n,q}(1)$. In this work, $n$ denotes the dimension of Mayer homology or Mayer Laplacians, and we always compute the Mayer Betti numbers and the spectral gaps of Mayer Laplacians for dimensions 0 and 1. The parameter  $q$ refers to the subscript of Mayer homology or Mayer Laplacians, representing the $q$-th stage, where $1 \leq q \leq N-1$. Specifically, for the case of $N=2$, we obtain the usual simplicial homology and its corresponding Laplacian, where $q$ can only take the value of 1. This implies that for a given dimension $n$, there is only one homology group and one Laplacian operator.

In our examples, we compute the corresponding Mayer Betti numbers and spectral gaps of Mayer Laplacians for $N=2$, $N=3$, and $N=5$. Additionally, for a given $N$, we calculate the Mayer Betti numbers and spectral gaps of Mayer Laplacians for $1\leq q\leq N-1$. These computational results will reveal the distinctive properties of Mayer homology and Mayer Laplacian. Generally speaking, Mayer homology is a homotopy invariant, and in a certain sense, Mayer homology provides a more geometric characterization.

\begin{figure}[H]
	\centering
	\includegraphics[width=0.4\textwidth]{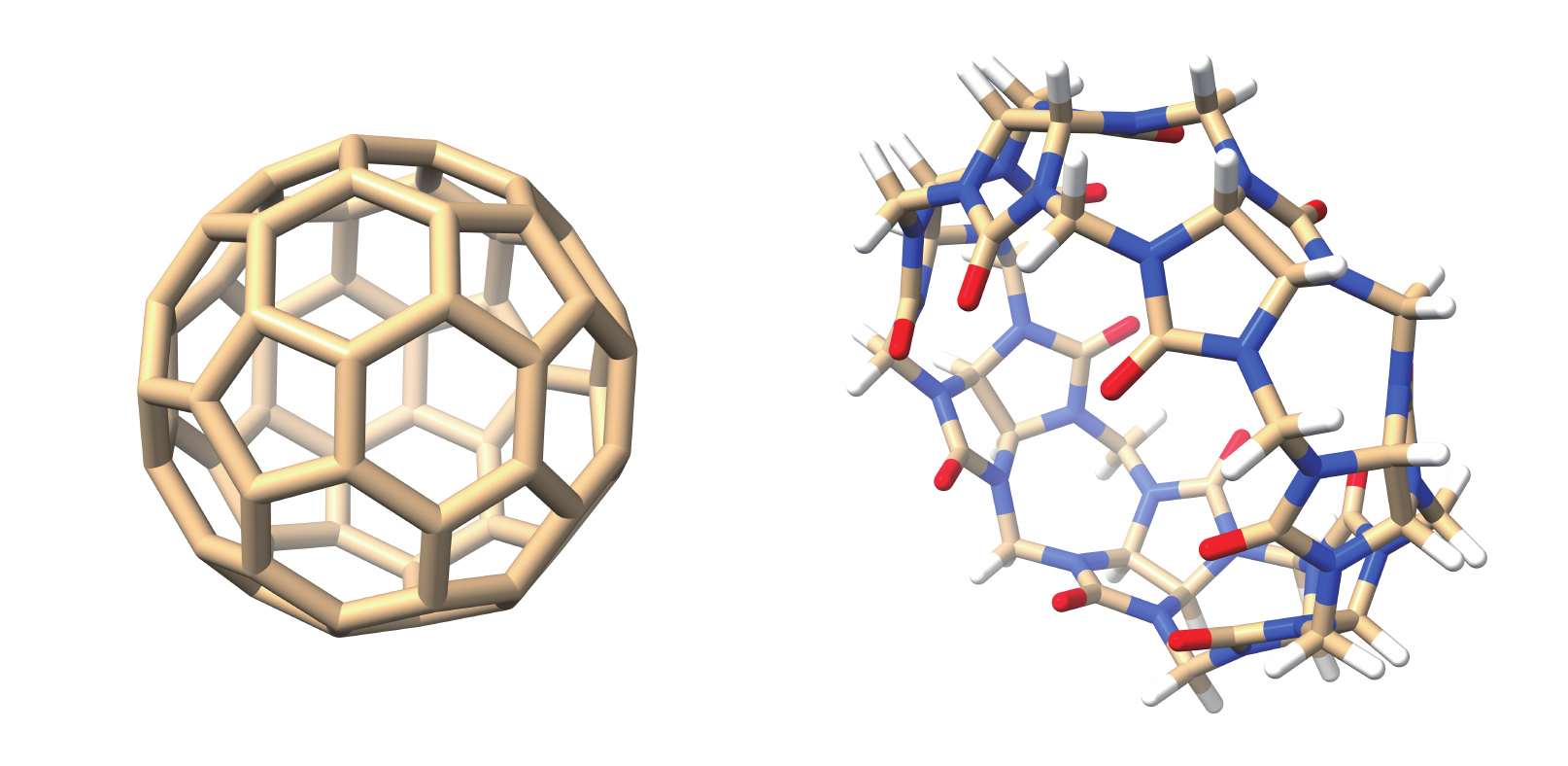}
	\caption{Structures of the fullerene $\mathrm{C}_{60}$ (Left) and  the cucurbit[7]uril $\mathrm{CB}7$ (Right).}\label{figure:molecules}
\end{figure}

In the depicted 3D structure showcased in \autoref{figure:molecules}, the fullerene $\mathrm{C}_{60}$ is presented as a carbon molecule with a distinctive soccer ball-like arrangement, comprising 60 carbon points. In contrast, the macrocyclic compound cucurbit[7]uril ($\mathrm{CB}7$) is intricately composed of 126 points, encompassing carbon, hydrogen, oxygen, and nitrogen atoms. Given the more symmetrical and concise configuration of $\mathrm{C}_{60}$ in comparison to the complex structure of $\mathrm{CB}7$, an effective featurization method is anticipated to reveal more nuanced patterns for $\mathrm{CB}7$.

In \autoref{figure:C60_b} and \autoref{figure:C60_l}, as well as \autoref{figure:CB7_b} and \autoref{figure:CB7_l}, distinct colors represent the numerical values of different Betti numbers and spectral gaps. The structural differences between $\mathrm{C}_{60}$ and $\mathrm{CB}7$ are readily apparent from the comparisons in \autoref{figure:C60_b} with \autoref{figure:CB7_b}, and \autoref{figure:C60_l} with \autoref{figure:CB7_l}. The persistent Mayer Betti numbers and persistent Mayer Laplacians of CB7 display more intricate patterns, and the critical points of variation in these patterns involve a broader range of filtration radius. This highlights the potential of persistent Mayer homology and persistent Mayer Laplacian as highly effective tools for featuring molecular structures.
\begin{figure}[H]
	\centering
	\includegraphics[width=0.8\textwidth]{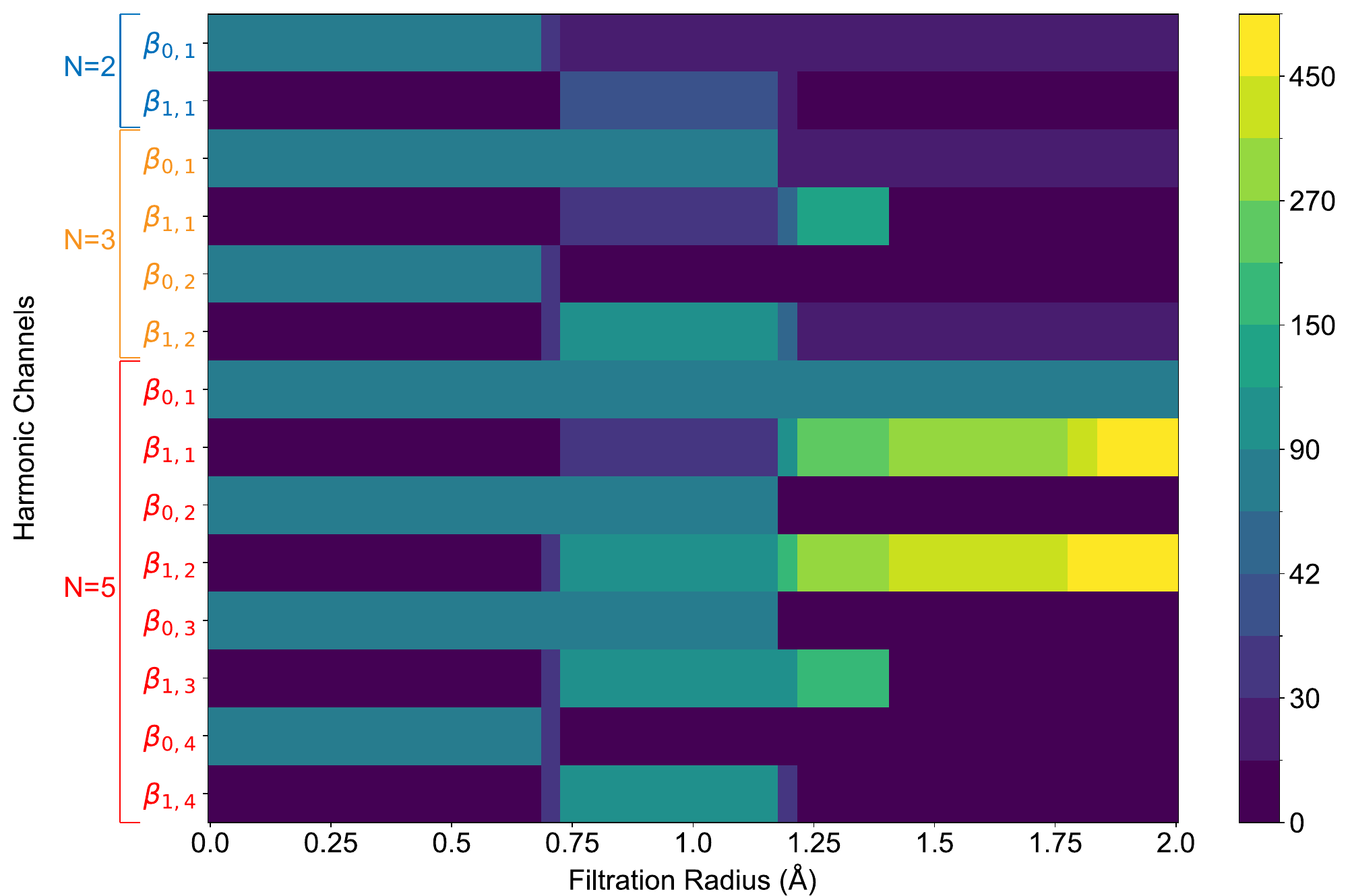}
	\caption{Comparison of persistent Betti numbers and the smallest positive eigenvalues of persistent Laplacians for fullerene $\mathrm{C}_{60}$ in cases where $N=2$, $N=3$, and $N=5$. Here, $\beta_{n,q}$ denotes the $n$-dimensional Betti number at stage $q$ for a given distance parameter. Similarly, $\lambda_{n,q}$ represents the smallest eigenvalue of the non-harmonic component of the Laplacian $\Delta_{n,q}$ at a given distance parameter.}\label{figure:C60_b}
\end{figure}

\begin{figure}[H]
	\centering
	\includegraphics[width=0.8\textwidth]{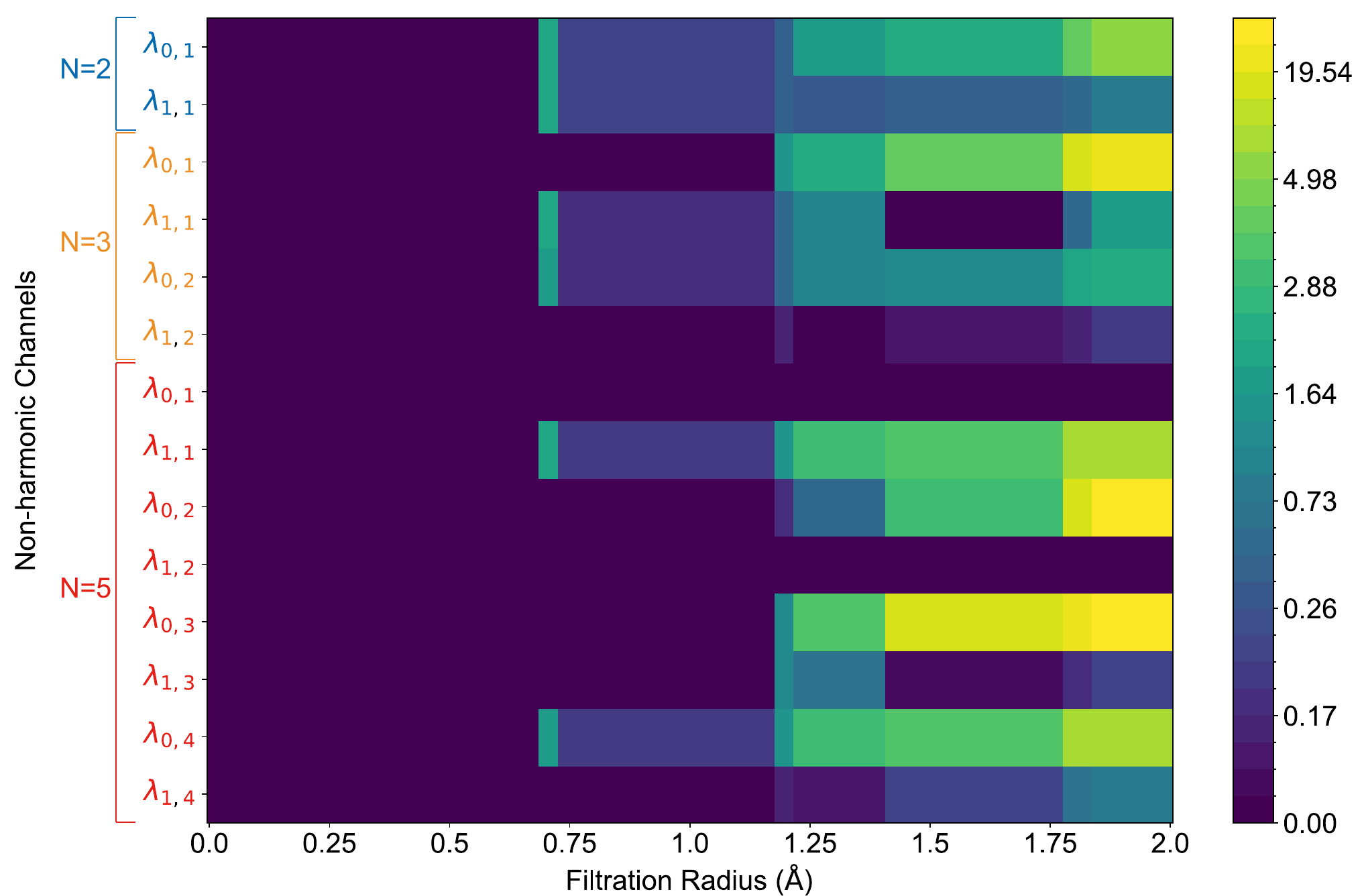}
	\caption{Comparison of persistent Betti numbers and the smallest positive eigenvalues of persistent Laplacians for fullerene $\mathrm{C}_{60}$ in cases where $N=2$, $N=3$, and $N=5$. Here, $\beta_{n,q}$ denotes the $n$-dimensional Betti number at stage $q$ for a given distance parameter. Similarly, $\lambda_{n,q}$ represents the smallest eigenvalue of the non-harmonic component of the Laplacian $\Delta_{n,q}$ at a given distance parameter.}\label{figure:C60_l}
\end{figure}

In the above calculations, for convenience, we computed the persistent Betti numbers and persistent spectral gaps of the 3-skeleton of the Vietoris-Rips complex. However, this does not hinder us from obtaining the topological and geometric characteristics of the structure. In the figures, we observe that for the case of $N=2$, the Betti numbers provide relatively limited information, while the spectral gaps can complement the geometric information.
For the cases of $N=3$ and $N=5$, the information contained in the Betti numbers alone is already comparable to the combined information of Betti numbers and spectral gaps for the $N=2$ case. This implies that, for larger values of $N$, computing Mayer Betti numbers alone is sufficient to capture the sum of harmonic and non-harmonic information present in the $N=2$ case. Generally, computing Betti numbers is much faster than solving for spectral gaps, providing a more efficient approach for calculating geometric features.

\begin{figure}[H]
	\centering
	\includegraphics[width=0.8\textwidth]{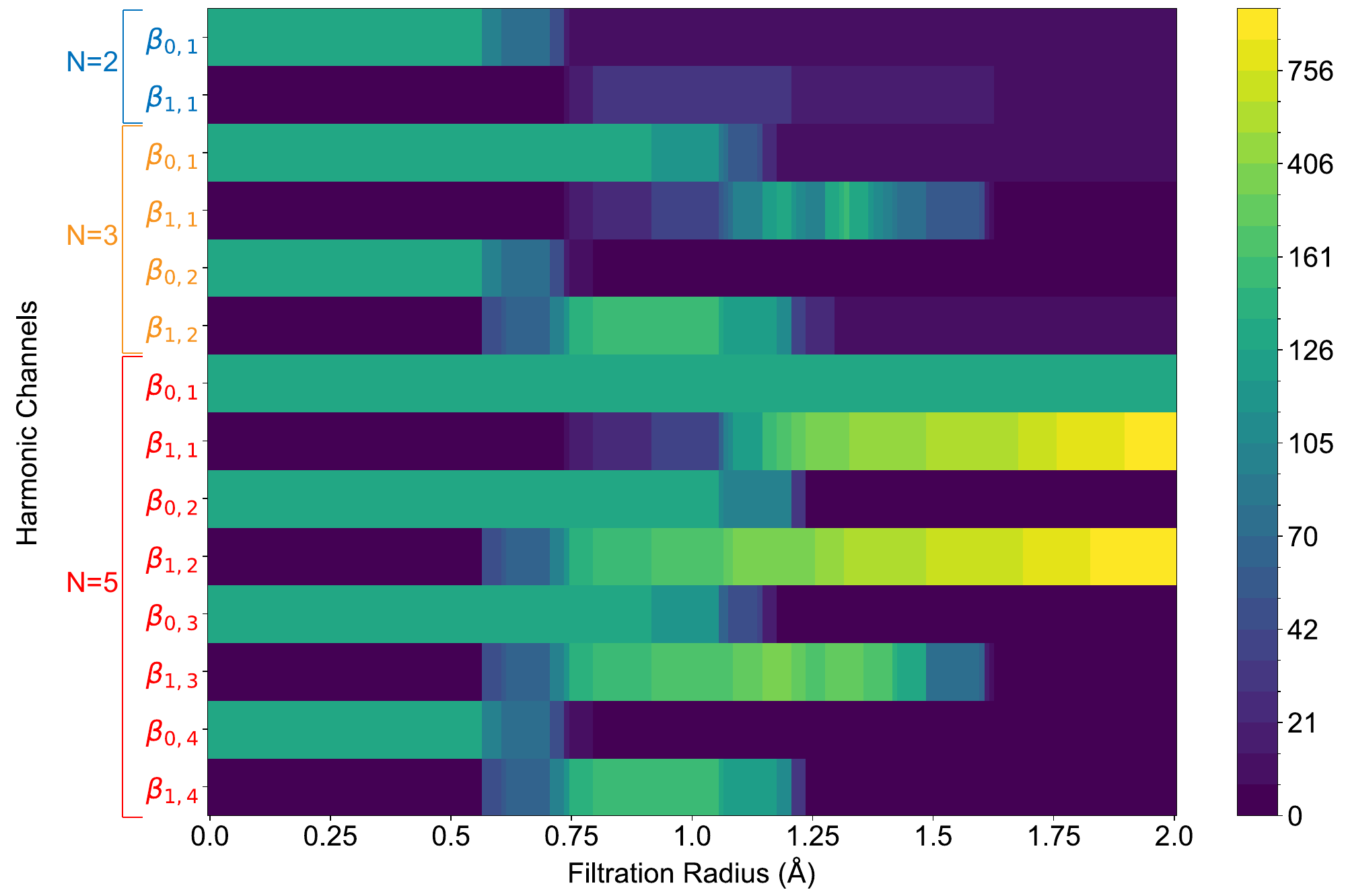}
	\caption{Comparison of persistent Betti numbers and the smallest positive eigenvalues of persistent Laplacians for cucurbit[7]uril $\mathrm{CB}7$ in cases where $N=2$, $N=3$, and $N=5$. Here, $\beta_{n,q}$ denotes the $n$-dimensional Betti number at stage $q$ for a given distance parameter. Similarly, $\lambda_{n,q}$ represents the smallest eigenvalue of the non-harmonic component of the Laplacian $\Delta_{n,q}$ at a given distance parameter.}\label{figure:CB7_b}
\end{figure}

\begin{figure}[H]
	\centering
	\includegraphics[width=0.8\textwidth]{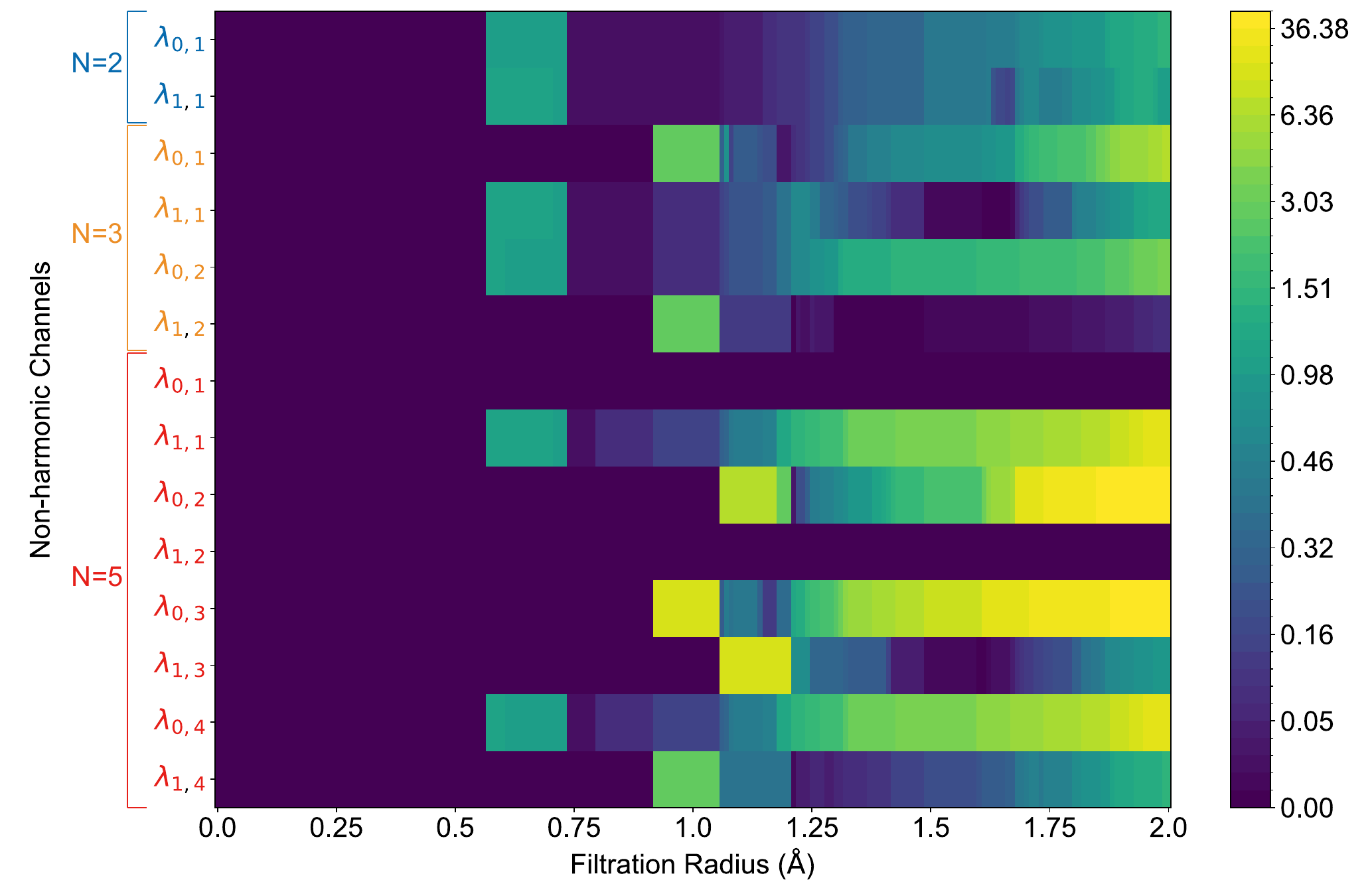}
	\caption{Comparison of persistent Betti numbers and the smallest positive eigenvalues of persistent Laplacians for cucurbit[7]uril $\mathrm{CB}7$ in cases where $N=2$, $N=3$, and $N=5$. Here, $\beta_{n,q}$ denotes the $n$-dimensional Betti number at stage $q$ for a given distance parameter. Similarly, $\lambda_{n,q}$ represents the smallest eigenvalue of the non-harmonic component of the Laplacian $\Delta_{n,q}$ at a given distance parameter.}\label{figure:CB7_l}
\end{figure}

Despite the calculation cost of persistent Mayer Laplacian, which should be approximately $N-1$ times that of the classical persistent Laplacian if we omit some of matrix multiplications, the persistent Mayer homology and persistent Mayer Laplacian, from an applied perspective, successfully provide practical multichannel featurization technique. As in applications, it is essential to obtain effective features of sufficient dimensionality before engaging in machine learning tasks, especially when dealing with datasets containing thousands or even millions of samples.

Traditional persistent homology and persistent Laplacian methods can only increase the feature dimensionality by adding more filtrations. This approach faces two main challenges. Firstly, there is an upper limit to the number of filtrations that can be added, and the computational cost becomes prohibitively high when dealing large filtration. Secondly, even with an increased number of filtrations, it does not guarantee the acquisition of useful information. This issue significantly impacts persistent homology, especially in higher dimensions (1-dimensional and above). In such scenarios, to obtain the desired features, it is common to divide the data into subgroups based on the physical understanding. For example, element-specific persistent homology considers different types of elements in the data \cite{cang2017topologynet}. Persistent Laplacians not only consider  the smallest positive eigenvalue but also take into account the largest eigenvalue and some statistical measures of the positive eigenvalues \cite{chen2022persistent}.

Persistent Mayer homology and persistent Mayer Laplacian possess Mayer degrees, serving as an additional dimension. By selecting specific values of $N$, we can effortlessly expand the feature dimensionality by a factor of $N-1$. Moreover, as the value of $N$ increases, each Mayer degree can have additional effective filtration choices for its corresponding features.  As shown in \autoref{figure:C60_b} and \autoref{figure:CB7_b}, more patterns in the persistent Mayer Betti numbers as $N$ increases.

\section{Conclusion}\label{section:conclusion}

To the best of our knowledge, currently known persistent homology and persistent Laplacians are constructed on chain complexes. For a long time, topological data analysis (TDA) relies on chain complexes to provide a framework for constructing all persistent homology and other persistent invariants. However, chain complex-based  persistent homology and persistent Laplacians are limited in their utility for dealing with real-world data challenges.

In this work, we consider $N$-chain complexes instead of chain complexes and construct persistent Mayer homology (PMH) and persistent Mayer Laplacians (PMLs) on $N$-chain complexes. Initially, we review some fundamental aspects of $N$-chain complexes, encompassing topics such as $N$-chain complexes, Mayer homology, and the construction of $N$-chain complexes on simplicial complexes. Additionally, we introduce Mayer Laplacians on $N$-chain complexes. We provide several computational examples of Mayer homology and Mayer Laplacians.
In our work, we consistently consider the simplicity of the case where $N$ is a prime number. However, the case in which $N$ is any integer greater than or equal to 2 can also be applicable for studying Mayer homology and Mayer Laplacians on $N$-chain complexes. In fact, our computational codes are applicable to all integer $N\geq2$.

The exploration of persistence on Mayer homology and Mayer Laplacians is pivotal in our work. We introduce persistent Mayer homology and explore the persistence diagram of persistent Mayer homology. Additionally, we investigate Wasserstein and bottleneck distances between Mayer persistence diagrams, establishing the stability of Mayer persistence diagrams. On the other hand, we introduce persistent Mayer Laplacians, providing additional geometric features to the spaces. On a discrete set of points in space, Vietoris-Rips complexes are obtained, allowing for the construction of $N$-chain complexes. Therefore, this work presents computations for the persistent Mayer homology and persistent Mayer Laplacians of finite point sets. The paper includes illustrative  figures and  examples.

Finally, we apply PMH and PMLs to small molecules, specifically the fullerene $\mathrm{C}_{60}$ and the cucurbit[7]uril $\mathrm{CB}7$.
Considering the coordinates of atoms in $\mathrm{C}_{60}$ and $\mathrm{CB}7$ as points in Euclidean space, we can obtain the corresponding Vietoris-Rips complexes. Subsequently, we compute the Mayer Betti numbers and spectral gaps of Mayer Laplacians for $N=2$, $N=3$, and $N=5$.
Additionally, for a given $N$, we calculate the Mayer Betti numbers and spectral gaps of Mayer Laplacians for $1\leq q\leq N-1$. These computational results unveil the distinctive properties of Mayer homology and Mayer Laplacian. In a broader context, Mayer homology stands as a homotopy invariant and, to some extent, offers a more powerful geometric characterization.

We believe that our approach gives rise to an emerging paradigm in TDA and offers fresh perspectives for  data science.  It will shed light on a wide range of undertakings, providing novel insights into real world problems. On the mathematical front,  persistent Mayer homology and persistent Mayer Laplacians can be further developed for various objects such as flag complex, path complex, directed graphs, hypergraphs, and hyperdigraphs. Additionally, persistent Mayer Dirac on various objects can be   formulated.
Theoretically,  $d^N=0$ gives rise to $N$-chain Mayer  Laplacian operators. One can use discrete Mayer Laplacians for data smoothing, image processing and many other applications.
Conceptually, Mayer Laplacians on manifolds may redefine heat equation, Schr\"{o}dinger equation,  Brownian motion, and conservation laws.
From an applied perspective, in the framework of topological deep learning
\cite{cang2017topologynet},  persistent Mayer homology and persistent Mayer Laplacians are expected to become powerful new data analysis tools for tackling data science challenges in diverse fields, including machine learning, physics, chemistry, biology, and materials science.

\section*{Data and Code Availability}
The data and source code obtained in this work are publicly available in the Github repository: \url{https://github.com/WeilabMSU/Persistence-Mayer-Homology-and-Laplacian}.

\section*{Acknowledgments}
This work was supported in part by NIH grants R01GM126189, R01AI164266, and R35GM148196, National Science Foundation grants DMS2052983, DMS-1761320, and IIS-1900473, NASA  grant 80NSSC21M0023,   Michigan State University Research Foundation, and  Bristol-Myers Squibb  65109.

\bibliographystyle{abbrv}
\bibliography{Reference}

\begin{thebibliography}{10}

\bibitem{abramov2006graded}
V.~Abramov.
\newblock On a graded $q$-differential algebra.
\newblock {\em Journal of Nonlinear Mathematical Physics}, 13(sup1):1--8, 2006.

\bibitem{bauer2013induced}
U.~Bauer and M.~Lesnick.
\newblock Induced matchings and the algebraic stability of persistence
  barcodes.
\newblock {\em arXiv preprint arXiv:1311.3681}, 2013.

\bibitem{bauer2020persistence}
U.~Bauer and M.~Lesnick.
\newblock Persistence diagrams as diagrams: A categorification of the stability
  theorem.
\newblock In {\em Topological Data Analysis: The Abel Symposium 2018}, pages
  67--96. Springer, 2020.

\bibitem{bi2022cayley}
W.~Bi, J.~Li, J.~Liu, and J.~Wu.
\newblock On the {C}ayley-persistence algebra.
\newblock {\em arXiv preprint arXiv:2205.10796}, 2022.

\bibitem{bressan2016embedded}
S.~Bressan, J.~Li, S.~Ren, and J.~Wu.
\newblock The embedded homology of hypergraphs and applications.
\newblock {\em arXiv preprint arXiv:1610.00890}, 2016.

\bibitem{bubenik2014categorification}
P.~Bubenik and J.~A. Scott.
\newblock Categorification of persistent homology.
\newblock {\em Discrete \& Computational Geometry}, 51(3):600--627, 2014.

\bibitem{cang2017topologynet}
Z.~Cang and G.-W. Wei.
\newblock Topologynet: Topology based deep convolutional and multi-task neural
  networks for biomolecular property predictions.
\newblock {\em PLoS computational biology}, 13(7):e1005690, 2017.

\bibitem{carlsson2010zigzag}
G.~Carlsson and V.~De~Silva.
\newblock Zigzag persistence.
\newblock {\em Foundations of computational mathematics}, 10:367--405, 2010.

\bibitem{carlsson2009computing}
G.~Carlsson, G.~Singh, and A.~Zomorodian.
\newblock Computing multidimensional persistence.
\newblock In {\em Algorithms and Computation: 20th International Symposium,
  ISAAC 2009, Honolulu, Hawaii, USA, December 16-18, 2009. Proceedings 20},
  pages 730--739. Springer, 2009.

\bibitem{carlsson2007theory}
G.~Carlsson and A.~Zomorodian.
\newblock The theory of multidimensional persistence.
\newblock In {\em Proceedings of the twenty-third annual symposium on
  Computational geometry}, pages 184--193, 2007.

\bibitem{chen2023persistent}
D.~Chen, J.~Liu, J.~Wu, and G.-W. Wei.
\newblock Persistent hyperdigraph homology and persistent hyperdigraph
  {L}aplacians.
\newblock {\em arXiv preprint arXiv:2304.00345}, 2023.

\bibitem{chen2022persistent}
J.~Chen, Y.~Qiu, R.~Wang, and G.-W. Wei.
\newblock Persistent laplacian projected omicron ba. 4 and ba. 5 to become new
  dominating variants.
\newblock {\em Computers in Biology and Medicine}, 151:106262, 2022.

\bibitem{chen2021evolutionary}
J.~Chen, R.~Zhao, Y.~Tong, and G.-W. Wei.
\newblock Evolutionary de {R}ham-{H}odge method.
\newblock {\em Discrete and continuous dynamical systems. Series B},
  26(7):3785, 2021.

\bibitem{dubois1996generalized}
M.~Dubois-Violette.
\newblock Generalized differential spaces with $d^n= 0$ and the
  $q$-differential calculus.
\newblock {\em Czechoslovak Journal of Physics}, 46(12):1227--1233, 1996.

\bibitem{dubois1998d}
M.~Dubois-Violette.
\newblock $d^n= 0$: Generalized homology.
\newblock {\em K-theory}, 14(4):371--404, 1998.

\bibitem{kassel1998algebre}
C.~Kassel and M.~Wambst.
\newblock Alg{\'e}bre homologique des $n$-complexes et homologie de
  {H}ochschild aux racines de l'unit{\'e}.
\newblock {\em Publications of the Research Institute for Mathematical
  Sciences}, 34(2):91--114, 1998.

\bibitem{liu2023algebraic}
J.~Liu, J.~Li, and J.~Wu.
\newblock The algebraic stability for persistent {L}aplacians.
\newblock {\em arXiv preprint arXiv:2302.03902}, 2023.

\bibitem{liu2021persistent}
X.~Liu, H.~Feng, J.~Wu, and K.~Xia.
\newblock Persistent spectral hypergraph based machine learning ({PSH-ML}) for
  protein-ligand binding affinity prediction.
\newblock {\em Briefings in Bioinformatics}, 22(5):bbab127, 2021.

\bibitem{lu2020gorenstein}
B.~Lu and Z.~Di.
\newblock Gorenstein cohomology of $n$-complexes.
\newblock {\em Journal of Algebra and Its Applications}, 19(09):2050174, 2020.

\bibitem{lu2020cartan}
B.~Lu, Z.~Di, and Y.~Liu.
\newblock {C}artan-{E}ilenberg $n$-complexes with respect to self-orthogonal
  subcategories.
\newblock {\em Frontiers of Mathematics in China}, 15:351--365, 2020.

\bibitem{mayer1942new}
W.~Mayer.
\newblock A new homology theory.
\newblock {\em Annals of Mathematics}, pages 370--380, 1942.

\bibitem{sitarz1998tensor}
A.~Sitarz.
\newblock On the tensor product construction for $q$-differential algebras.
\newblock {\em Letters in Mathematical Physics}, 44(1):17--21, 1998.

\bibitem{spanier1949mayer}
E.~H. Spanier.
\newblock The {M}ayer homology theory.
\newblock {\em Bulletin of the American Mathematical Society}, 55(2):102--112,
  1949.

\bibitem{wang2020persistent}
R.~Wang, D.~D. Nguyen, and G.-W. Wei.
\newblock Persistent spectral graph.
\newblock {\em International journal for numerical methods in biomedical
  engineering}, 36(9):e3376, 2020.

\bibitem{wang2023persistent}
R.~Wang and G.-W. Wei.
\newblock Persistent path laplacian.
\newblock {\em Foundations of data science (Springfield, Mo.)}, 5(1):26, 2023.

\bibitem{wei2021persistent}
X.~Wei and G.-W. Wei.
\newblock Persistent sheaf {L}aplacians.
\newblock {\em arXiv preprint arXiv:2112.10906}, 2021.

\end{thebibliography}

\end{CJK*}
\end{document}